\tikzset{neg/.style={
            decoration={markings,
            mark= at position 0.5 with {
                  \node[transform shape] (tempnode) {$\setminus$};
                  %\draw[thick] (tempnode.north east) -- (tempnode.south west);
                  }
              },
              postaction={decorate}
}}
\newtheorem{theorem}{Theorem}[section]
\newtheorem{lemma}[theorem]{Lemma}
\newtheorem{proposition}[theorem]{Proposition}
\newtheorem{corollary}[theorem]{Corollary}
\theoremstyle{definition}
\newtheorem{definition}[theorem]{Definition}
\newtheorem{example}[theorem]{Example}
\theoremstyle{remark}
\newtheorem{remark}[theorem]{Remark}
\numberwithin{equation}{section}
\newcommand{\real}{\mathbb{R}}
\newcommand{\R}{\mathbb{R}}
\newcommand{\N}{\mathbb{N}}
\newcommand{\pten}{\ensuremath{\widehat{\otimes}_\pi}}
  \DeclareMathOperator{\diam}{diam\,}
  \DeclareMathOperator{\co}{co}
  \newcommand{\cconv}{\overline{\co}}
\renewcommand{\geq}{\geqslant}
\renewcommand{\leq}{\leqslant}
\newcommand{\norm}[1]{\left\Vert#1\right\Vert}
\newcommand{\lip}{{\mathrm{lip}}_0}
\newcommand{\length}{{\mathrm{length}}}
\newcommand{\Lip}{{\mathrm{Lip}}_0}
\newcommand{\SA}{\operatorname{SNA}}
\newcommand{\NA}{\operatorname{NA}}
\newcommand{\ext}[1]{\operatorname{ext}\left(#1\right)}
\newcommand{\strexp}[1]{\operatorname{str-exp}\left(#1\right)}
\newcommand{\preext}[1]{\operatorname{pre-ext}\left(#1\right)}
\newcommand{\dent}[1]{\operatorname{dent}\left(#1\right)}
\newcommand{\Mol}[1]{\operatorname{Mol}\left(#1\right)}
\newcommand{\supp}{\operatorname{supp}}
\newcommand\restr[2]{\ensuremath{\left.#1\right|_{#2}}}
\title{On strongly norm attaining Lipschitz maps}
\author[Cascales]{Bernardo Cascales\texorpdfstring{$^\dag$}{}}
\thanks{$\dag$ Sadly, Bernardo Cascales passed away in April, 2018. As this work was initiated with him, the rest of the authors decided to finish the research and to submit the paper with his name as coauthor. This is our tribute to our dear friend and master}
\address[Cascales]{Departamento de Matem\'aticas, Universidad de Murcia, 30100 Espinardo, Murcia, Spain}
\author[Chiclana]{Rafael Chiclana}
\author[Garc\'ia-Lirola]{Luis C.\ Garc\'ia-Lirola}
\address[Garc\'ia-Lirola]{Department of Mathematical Sciences, Kent State University, Kent OH 44242, USA}
\email{lucagali@gmail.com}
\author[Mart\'in]{Miguel Mart\'in}
\author[Rueda Zoca]{Abraham Rueda Zoca}
\address[Chiclana, Mart\'in, Rueda-Zoca]{Universidad de Granada, Facultad de Ciencias.
Departamento de An\'{a}lisis Matem\'{a}tico, 18071-Granada
(Spain)}
\email{rafachiclanavega@gmail.com; mmartins@ugr.es; abrahamrueda@ugr.es}
\thanks{The research of B.\ Cascales and L.\ Garc\'ia-Lirola was supported by the grants Spanish MTM2017-83262-C2-2-P and Fundaci\'on S\'eneca CARM 19368/PI/14. L. Garc\'ia-Lirola was also supported by a postdoctoral grant in the framework of \emph{Programa Regional de Talento Investigador y su Empleabilidad} from \emph{Fundaci\'on S\'eneca - Agencia de Ciencia y Tecnolog\'ia de la Regi\'on de Murcia}. The research of R.\ Chiclana and M.\ Mart\'{\i}n was supported by Spanish MINECO/FEDER grant MTM2015-65020-P and by Junta de Andaluc\'ia Grant FQM-0185. The research of A.\ Rueda Zoca was supported by MECD FPU16/00015, by Spanish MINECO/FEDER grant MTM2015-65020-P, and by Junta de Andaluc\'ia Grant FQM-0185.}
\keywords{Lipschitz function; Lipschitz-free space; norm attaining operators; octahedrality; metric space}
\subjclass[2010]{Primary 46B04; Secondary 46B20, 46B22, 54E50}
\date{July 9th, 2018. Revised: December 29th, 2018}
\let\oldtocsection=\tocsection
\let\oldtocsubsection=\tocsubsection
\renewcommand{\tocsection}[2]{\hspace{0em}\oldtocsection{#1}{#2}}
\renewcommand{\tocsubsection}[2]{\hspace{0,5em}\oldtocsubsection{#1}{#2}}
\begin{document}

\begin{abstract}
We study the set $\SA(M,Y)$ of those Lipschitz maps from a (complete pointed) metric space $M$ to a Banach space $Y$ which (strongly) attain their Lipschitz norm (i.e.\ the supremum defining the Lipschitz norm is a maximum). Extending previous results, we prove that this set is not norm dense when $M$ is a length space (or local) or when $M$ is a closed subset of $\R$ with positive Lebesgue measure, providing new examples which have very different topological properties than the previously known ones. On the other hand, we study the linear properties which are sufficient to get Lindenstrauss property A for the Lipschitz-free space $\mathcal{F}(M)$ over $M$, and show that all of them actually provide the norm density of $\SA(M,Y)$ in the space of all Lipschitz maps from $M$ to any Banach space $Y$. Next, we prove that $\SA(M,\R)$ is weakly sequentially dense in the space of all Lipschitz functions for all metric spaces $M$. Finally, we show that the norm of the bidual space of $\mathcal{F}(M)$ is octahedral provided the metric space $M$ is discrete but not uniformly discrete or $M'$ is infinite.
\end{abstract}

\maketitle

\thispagestyle{plain}

\begin{center}
\begin{minipage}[c]{0,9\textwidth}\small
\tableofcontents
\end{minipage}
\end{center}

\section{Introduction}

Our aim in this paper is to discuss when the set of those Lipschitz maps which strongly attain their norm is dense in the space of Lipschitz maps. Let us give the necessary definitions. A \emph{pointed metric space} is just a metric space $M$ in which we distinguish an element, called $0$. All along the paper, the metric spaces will be complete and the Banach spaces will be over the real scalars. Given a pointed metric space $M$ and a Banach space $Y$, we write $\Lip(M,Y)$ to denote the Banach space of all Lipschitz maps $F:M\longrightarrow Y$ which vanish at $0$, endowed with the Lipschitz norm defined by
\begin{equation}\label{eq:def-norma-Lipschitz}
 \| F \|_L := \sup\left\{\frac{\|F(x)-F(y)\|}{d(x,y)} \colon x,y\in M,\, x \neq y \right\}.
\end{equation}
Let us comment that the election of the distinguished element is not important, as the resulting spaces of Lipschitz maps are isometrically isomorphic. Following \cite{kms} and \cite{Godefroy-survey-2015}, we say that $F\in \Lip(M,Y)$ \emph{attains its norm in the strong sense} or that \emph{strongly attains its norm}, whenever the supremum in \eqref{eq:def-norma-Lipschitz} is actually a maximum, that is, whenever there are $x,y\in M$, $x\neq y$, such that
$$
\frac{\|F(x)-F(y)\|}{d(x,y)}=\|F\|_L.
$$
The subset of all Lipschitz maps in $\Lip(M,Y)$ which attain their norm in the strong sense is denoted by $\SA(M,Y)$.

As far as we know, the study of norm attaining Lipschitz maps was initiated independently in \cite{Godefroy-PAFA-2016} and \cite{kms}. Both papers deal with notions of norm attainment which are different from the strong one, and they are focused on Lipschitz functionals (\cite{kms}) or on vector-valued Lipschitz maps (\cite{Godefroy-PAFA-2016}) defined on Banach spaces. The paper \cite{Godefroy-PAFA-2016} contains some negative results on the density of the set of Lipschitz maps which attain their norm in a very weak way. The paper \cite{kms} contains positive results on the density of the set of Lipschitz functionals which attain their norm ``directionally'', a notion which is weaker than the strong norm attainment. It also contains negative results: when $M$ is a Banach space, $\SA(M,\R)$ is not dense in $\Lip(M,\R)$, and it is also the case when $M=[0,1]$ or, more generally, when $M$ is metrically convex (or geodesic) (see Section \ref{sect:negative} for the definition). Our first aim in this paper will be to extend these negative results to more general metric spaces as length spaces and subsets of $[0,1]$ with positive Lebesgue measure, see the details in Section  \ref{sect:negative}. As a consequence of our results, we will obtain examples of metric spaces $M$ where $\SA(M,\R)$ is not dense in $\Lip(M,\R)$ and, in contrast with the previously known results, no connectedness assumption is needed on $M$ (e.g.\ we can consider $M$ to be a ``fat'' Cantor set).

On the other hand, the paper \cite{Godefroy-survey-2015} contains the first positive result on the density of strongly norm attaining Lipschitz functionals (and also some results for Lipschitz maps): this is the case when the little Lipschitz space over $M$ strongly separates $M$ (as it is the case of $M$ being compact and countable \cite{dalet}, when $M$ is the middle third Cantor set, or when $M$ is a compact H\"older space \cite[Proposition~3.2.2]{wea5}). A slight generalisation can be found in \cite[Section 4]{gprstu}. These results have been recently extended in \cite[Proposition 7.4]{lppr}, but we need a little more background in order to enunciate the result. Let $M$ be a pointed metric space. We denote by $\delta$ the canonical isometric embedding of $M$ into $\Lip(M,\R)^*$, which is given by $\langle f, \delta(x) \rangle =f(x)$ for $x \in M$ and $f \in \Lip(M,\R)$. We denote by $\mathcal{F}(M)$ the norm-closed linear span of $\delta(M)$ in the dual space $\Lip(M,\R)^*$, which is usually called the \textit{Lipschitz-free space over $M$}, see the papers \cite{Godefroy-survey-2015} and \cite{gk}, and the book \cite{wea5} (where it receives the name of Arens-Eells space) for background on this. It is well known that $\mathcal{F}(M)$ is an isometric predual of the space $\Lip(M,\R)$ \cite[pp. 91]{Godefroy-survey-2015}, indeed it is the unique isometric predual when $M$ is bounded or a geodesic space \cite{Weaver-2017}. Now, \cite[Proposition 7.4]{lppr} states that if $\mathcal{F}(M)$ has the Radon-Nikod\'{y}m property (RNP in short), then $\SA(M,Y)$ is dense in $\Lip(M,Y)$ for every Banach space $Y$, extending by far the results of \cite{Godefroy-survey-2015}. At the beginning of Section \ref{sec:property_A} we will give a short exposition of why this result holds. Examples of metric spaces for which $\mathcal{F}(M)$ has the RNP are exhibited in Example \ref{ejernp}.

There is a connection between the study of the density of norm attaining Lipschitz maps and the study of norm attaining linear operators, a research line which goes back to Lindenstrauss' seminal paper \cite{lindens} from 1963. Let us give a piece of notation for Banach spaces. Given a Banach space $X$, we will denote by $B_X$ and $S_X$ the closed unit ball and the unit sphere of $X$, respectively. We will also denote by $X^*$ the topological dual of $X$. If $Y$ is another Banach space, we write $\mathcal{L}(X,Y)$ to denote the Banach space of all bounded linear operators from $X$ to $Y$, endowed with the operator norm. We say that $T\in \mathcal{L}(X,Y)$ \emph{attains its norm}, and write $T\in \NA(X,Y)$, if there is $x\in X$ with $\|x\|=1$ such that $\|Tx\|=\|T\|$. The study of the density of norm attaining linear operators has its root in the classical Bishop-Phelps theorem which states that $\NA(X,\R)$ is dense in $X^*=\mathcal{L}(X,\R)$ for every Banach space $X$. J.~Lindenstrauss extended such study to general linear operators, showed that this is not always possible, and also gave positive results. If we say that a Banach space $X$ has \emph{(Lindenstrauss) property A} when $\overline{\NA(X,Y)}=\mathcal{L}(X,Y)$ for every Banach space $Y$, it is shown in \cite{lindens} that reflexive spaces have property A. This result was extended by J.~Bourgain \cite{bourgain1977} showing that Banach spaces $X$ with the RNP also have Lindenstrauss property A, and he also provided a somehow reciprocal result. We refer the interested reader to the survey paper \cite{AcostaRACSAM2006} for a detailed account on norm attaining linear operators.

Coming back to Lipschitz maps, let us recall that when $M$ is a pointed metric space and $Y$ is a Banach space, it is well known that every Lipschitz map $f \colon M \longrightarrow Y$ can be isometrically identified with the continuous linear map $\widehat{f} \colon \mathcal{F}(M) \longrightarrow Y$ defined by $\widehat{f}(\delta_p)=f(p)$ for every $p \in M$. This mapping completely identifies the spaces $\Lip(M,Y)$ and $\mathcal{L}(\mathcal{F}(M),Y)$. Bearing this fact in mind, the set $\SA(M,Y)$ is identified with the set of those elements of $\mathcal{L}(\mathcal{F}(M),Y)$ which attain their operator norm at elements of the form $\frac{\delta(x)-\delta(y)}{d(x,y)}$ for some $x,y\in M$, $x\neq y$. It then follows that when $\SA(M,Y)$ is dense in $\Lip(M,Y)$, in particular, $\NA(\mathcal{F}(M),Y)$ has to be dense in $\mathcal{L}(\mathcal{F}(M),Y)$. The converse result is not true as, for instance, $\NA(\mathcal{F}(M),\R)$ is always dense by the Bishop-Phelps theorem but, as we have already mentioned, there are many metric spaces $M$ such that $\SA(M,\R)$ is not dense in $\Lip(M,\R)$. Of course, if $\SA(M,Y)$ is dense in $\Lip(M,Y)$ for every Banach space $Y$, then $\mathcal{F}(M)$ has Lindenstrauss property A. We do not know whether the converse result is true, but it is now not very surprising the appearance of the RNP of $\mathcal{F}(M)$ as a sufficient condition for the density of $\SA(M,Y)$ in $\Lip(M,Y)$ for every $Y$ \cite[Proposition 7.4]{lppr}. Actually, as far as we know, the RNP of $\mathcal{F}(M)$ could be a necessary condition for the density of $\SA(M,Y)$ in $\Lip(M,Y)$ for every space $Y$. On the other hand, there are several geometric properties of a Banach space $X$ which imply Lindenstrauss property A, being the most common, apart from having $X$ the RNP, the properties $\alpha$ and quasi-$\alpha$ and the existence of a uniformly strongly exposed set of $B_X$ whose closed convex hull is the whole $B_X$. In Section \ref{sec:property_A}, we analyse these properties for Lipschitz-free spaces and show that each of them actually forces $\SA(M,Y)$ to be dense in $\Lip(M,Y)$ for every Banach space $Y$. We also provide characterisations of these properties for $\mathcal{F}(M)$ in terms of the metric space $M$ and study the relationship between them. To this end, one of the main tools will be the recent characterisations of strongly exposed points and denting points of the unit ball of Lipschitz-free spaces appearing in \cite{gpr} and \cite{lppr}, respectively, which we will include at Subsection \ref{subsec:geometry-free-spaces}.

The previous results make clear that the density of $\SA(M,\R)$ in $\Lip(M,\R)$ is a strong requirement and there are not too many metric spaces having this property. A completely different situation holds when we deal with weak density: we show in Section \ref{sectidensidadebil} that $\SA(M,\R)$ is weakly sequentially dense in $\Lip(M,\R)$ for every pointed metric space $M$, extending \cite[Theorem 2.6]{kms}, where the result was proved when $M$ is a Banach space or, more generally, when $M$ is a length space.

The main tool to get the above result is an extension of a lemma from \cite{kms} which provides an easy criterium to get weak convergence of a sequence of Lipschitz maps which we include in Subsection \ref{subsec:geometry-free-spaces}. Such a result produces a by-product of our study: that the norm of the bidual of $\mathcal{F}(M)$ is octahedral when $M'$ is infinite or $M$ is discrete but not uniformly discrete. Recall that the norm of a Banach space $X$ is said to be \emph{octahedral} if, given a finite-dimensional subspace $Y$ of $X$ and $\varepsilon>0$, we can find $x\in S_X$ such that the inequality
$$\Vert y+\lambda x\Vert\geq (1-\varepsilon)(\Vert y\Vert +\vert\lambda\vert)$$
holds for every $y\in Y$ and $\lambda\in\mathbb R$. From an isomorphic point of view, it was proved in \cite{godefroyocta} that a Banach space $X$ can be equivalently renormed to have an octahedral norm if, and only if, the space $X$ contains an isomorphic copy of $\ell_1$ and it was left as an open problem whether any Banach space containing an isomorphic copy of $\ell_1$ can be equivalently renormed so that the bidual norm is octahedral. From an isometric point of view, it is
proved in \cite[Theorem~1 and Proposition 3]{deville} that if a Banach space $X$ has an octahedral norm, then every convex combination of weak-star slices of $B_{X^*}$ has diameter two, and the reciprocal result has been recently proved in \cite{blroctajfa}. By using this characterisation, it was proved in \cite[Theorem 2.4]{blr} that if $M$ is not uniformly discrete and bounded, then $\mathcal F(M)$ has an octahedral norm. This result was pushed further in \cite{pr}, where the authors characterised all the Lipschitz-free Banach spaces $\mathcal F(M)$ whose norm is octahedral in terms of a geometric property of the underlying metric space $M$. Observe that the norm of $X^{**}$ is octahedral if and only if every convex combination of weak slices of $B_{X^*}$ has diameter two \cite[Corollary 2.2]{blroctajfa}. Thus, easy examples show that the norm of a Banach space $X$ can be octahedral without its bidual norm being octahedral (e.g.\ $X=\mathcal C([0,1])$ does the work). It is then a natural question to check when the bidual norm of $\mathcal F(M)$ can be octahedral. Particular examples of metric spaces $M$ satisfying that the norm of $\mathcal{F}(M)^{**}$ is octahedral are known (for instance when $M$ is a subset of an $\R$-tree as a consequence of \cite[Theorem 4.2]{Godard} and \cite[Proposition 3.4]{Yagoub}). However, it is not known whether there exists a metric space $M$ such that the norm of $\mathcal{F}(M)$ is octahedral but that of $\mathcal{F}(M)^{**}$ is not. As we have already announced, we will prove that the norm of the bidual space of $\mathcal{F}(M)$ is octahedral when $M'$ is infinite or $M$ is a discrete but not uniformly discrete metric space. Besides, as a consequence of the techniques involved in the proof, we obtain a partial positive answer to \cite[Question 3.1]{blr}.

Even though all the main results of the paper have been presented so far, we would like to include an outline of the paper. We finish this introduction with a subsection including the needed notation and terminology on metric spaces and some new and previously known results on the geometry of Lipschitz-free spaces which will be relevant in our discussion. In Section \ref{sect:negative} we extend the negative examples of \cite{kms} to more general ones: we prove that $\SA(M,\R)$ is not dense in $\Lip(M,\R)$ whenever $M$ is a length metric space and when $M$ is a subset of an $\mathbb R$-tree with positive measure, in particular, when $M$ is a subset of $\R$ with positive Lebesgue measure. We devote Section \ref{sec:property_A} to discuss some sufficient conditions for Lindenstrauss property A in the setting of Lipschitz-free spaces, showing that all of them actually imply the density of strongly norm attaining Lipschitz maps; we also give metric characterisations of some of them and discuss the relations between them. The main result of Section \ref{sectidensidadebil} is that $\SA(M,\R)$ is weakly sequentially dense in $\Lip(M,\R)$ for every pointed metric space $M$. Finally, we show in Section \ref{sectiocta} that the norm of $\mathcal{F}(M)^{**}$ is octahedral when $M'$ is infinite or $M$ is discrete but not uniformly discrete.

\subsection[Geometry of Lipschitz-free spaces]{New and old results on the geometry of Lipschitz-free spaces}\label{subsec:geometry-free-spaces}
Let $X$ be a Banach space. A \emph{slice} of the unit ball $B_X$ is a non-empty intersection of an open half-space with $B_X$; all slices can be written in the form \[
S(B_X,f,\beta):=\{x\in B_X \colon f(x)>1-\beta\}
\]
where $f \in S_{X^*}$, $\beta>0$. The notations $\ext{B_X}$, $\preext{B_X}$, $\strexp{B_X}$ stand for the set of extreme points, preserved extreme points (i.e.\ extreme points which remain extreme in the bidual ball), and strongly exposed points of $B_X$, respectively. A point $x \in B_X$ is said to be a \emph{denting point} of $B_X$ if there exist slices of $B_X$ containing $x$ of arbitrarily small diameter. We will denote by $\dent{B_X}$ the set of denting points of $B_X$. We always have that
$$
\strexp{B_X}\subset \dent{B_X} \subset \preext{B_X} \subset \ext{B_X}.
$$

Given a metric space $M$, $B(x,r)$ denotes the closed ball in $M$ centered at $x \in M$ with radius $r$. Given $x, y \in M$, we write $[x,y]$ to denote the \emph{metric segment} between $x$ and $y$, that is,
\[
[x,y] := \{z \in M \colon d(x,z)+d(z,y)=d(x,y)\}.
\]
By a \emph{molecule} we mean an element of $\mathcal{F}(M)$ of the form
\[
m_{x,y}:=\frac{\delta(x)-\delta(y)}{d(x,y)}
\]
for $x, y \in M$, $x \neq y$. We write $\Mol{M}$ to denote the set of all molecules of $M$. Note that, since $\Mol{M}$ is balanced and norming for $\Lip(M,\R)$, a straightforward application of Hahn-Banach theorem implies that
$$
\overline{\co}(\Mol{M})=B_{\mathcal F(M)}
$$
(the notation $\overline{\co}(A)$ denotes the closed convex hull of a set $A$).

The following proposition summarises some known results about extremality in Lipschitz-free spaces that we may find in \cite[Corollary 2.5.4]{wea5}, \cite[Theorem 2.4]{lppr}, \cite[Theorem 5.4]{gpr}, and \cite[Proposition 2.9]{lppr}. We need some notation: given $x,y,z \in M$, the \emph{Gromov product of $x$ and $y$ at $z$} is defined as
 \[
  (x,y)_z:=\frac12\bigl(d(x,z)+d(y,z)-d(x,y)\bigr)\geq 0,
 \]
see e.g.~\cite{bh}. It corresponds to the distance of $z$ to the unique closest point $b$ on the unique geodesic between $x$ and $y$ in any $\mathbb R$-tree into which $\{x,y,z\}$ can be isometrically embedded (such a tree, tripod really, always exists). Notice that $(x,z)_y+(y,z)_x=d(x,y)$ and that $(x,y)_z\leq d(x,z)$, facts which we will use without further comment.

\begin{proposition}\label{prop:extremalidad}
Let $M$ be a complete metric space. Then:
\begin{enumerate}[(a)]
\item Every preserved extreme point of $B_{\mathcal F(M)}$ is both a molecule and a denting point of $B_{\mathcal F(M)}$, so $$\preext{B_{\mathcal F(M)}}=\dent{B_{\mathcal F(M)}}\subseteq \Mol{M}.$$
\item Given $x,y\in M$ with $x\neq y$, the following assertions are equivalent:
\begin{enumerate}[(i)]
\item $m_{x,y}$ is a strongly exposed point of $B_{\mathcal F(M)}$.
\item There exists $\varepsilon_0>0$ such that the inequality
$$
(x,y)_z\geq \varepsilon_0 \min\{d(x,z),d(y,z)\}
$$
holds for every $z\in M\setminus \{x,y\}$ (in other words, the pair $(x,y)$ fails property $(Z)$ of \cite{ikw}).
\end{enumerate}
\item  $\Mol{M}$ is closed in $\mathcal F(M)$.
\end{enumerate}
\end{proposition}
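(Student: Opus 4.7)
The proposition is a compendium, so I would attack each of (a), (b), (c) separately, relying throughout on the barycentric identity $B_{\mathcal F(M)}=\cconv(\Mol{M})$ and on the fact that the weak topology on $B_{\mathcal F(M)}$ is far more tractable at molecules than at generic points. For part (a), I would first establish $\preext{B_{\mathcal F(M)}}\subseteq \Mol{M}$ via Milman's converse theorem: extending the barycentric identity to the bidual shows that $B_{\mathcal F(M)^{**}}$ is the $w^*$-closed convex hull of $\Mol{M}$, and a preserved extreme $\mu\in\mathcal F(M)$ is extreme in $B_{\mathcal F(M)^{**}}$, hence lies in the $w^*$-closure of $\Mol{M}$ by Milman. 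The upgrade to norm membership would use that $\mu$, as a preserved extreme point of $B_{\mathcal F(M)}$, is also a point of weak-to-norm continuity there (this is where the ``preserved'' hypothesis really bites), combined with the closedness of $\Mol{M}$ from part (c). For the equality $\preext{B_{\mathcal F(M)}}=\dent{B_{\mathcal F(M)}}$, the inclusion $\dent{B_{\mathcal F(M)}}\subseteq\preext{B_{\mathcal F(M)}}$ is general; for the converse I would, given a preserved extreme molecule $m_{x,y}$, decompose elements of any small slice as convex combinations of molecules and argue that only molecules norm-close to $m_{x,y}$ can contribute significantly, producing slices of arbitrarily small diameter.

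For part (b), direction (i)$\Rightarrow$(ii): if property $(Z)$ holds along a sequence $z_n$, the identity $d(x,z_n)+d(y,z_n)-d(x,y)=2(x,y)_{z_n}$ yields a convex combination $\lambda_n m_{x,z_n}+(1-\lambda_n) m_{z_n,y}$ that is close to $m_{x,y}$ in any functional sense modulo an error of order $(x,y)_{z_n}/\min\{d(x,z_n),d(y,z_n)\}\to 0$. Any candidate exposing $f$ would therefore almost-expose at least one competitor, violating strong exposure because the norm distances $\|m_{x,y}-m_{x,z_n}\|$ and $\|m_{x,y}-m_{z_n,y}\|$ need not be small. For (ii)$\Rightarrow$(i), I would build an explicit exposing functional from a concave reparametrisation of distance, for example of the form $f(t)=\varphi(d(y,t))-\varphi(d(x,t))$ with $\varphi$ adapted to the Gromov product bound, and translate that bound into a slice diameter estimate via a case analysis on which of $d(x,\cdot)$ and $d(y,\cdot)$ is smallest on competing points.

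For part (c), assume $m_{x_n,y_n}\to v$ in norm. The plan is to extract, from the Cauchy behaviour of the molecules tested against Lipschitz functions of the form $f_z(t)=d(t,z)$, enough metric control on the coordinate sequences $\{x_n\}$ and $\{y_n\}$ to identify a limit pair $(x,y)$ in $M$ with $v=m_{x,y}$, after first excluding the degenerate regimes $d(x_n,y_n)\to 0$ and $d(x_n,y_n)\to\infty$ by testing against sufficiently discriminating Lipschitz functions whose values on the molecules become incompatible with the existence of a fixed unit-norm limit. The hardest single step in the whole proposition is direction (ii)$\Rightarrow$(i) of part (b): producing a single Lipschitz test function whose slices shrink uniformly in diameter requires translating a pointwise Gromov product lower bound into a genuinely uniform estimate, which typically splits into several subcases according to the geometric position of competing molecules relative to $x$ and $y$.
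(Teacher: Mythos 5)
First, a point of context: the paper does not prove this proposition at all. It is explicitly presented as a summary of results quoted from the literature (\cite[Corollary 2.5.4]{wea5} for preserved extreme points being molecules, \cite[Theorem 2.4]{lppr} for $\preext{B_{\mathcal F(M)}}=\dent{B_{\mathcal F(M)}}$, \cite[Theorem 5.4]{gpr} for part (b), and \cite[Proposition 2.9]{lppr} for part (c)), so your sketch can only be measured against those known proofs. Your outlines for part (c) and for (i)$\Rightarrow$(ii) of part (b) are essentially the standard arguments; both hinge on the two inequalities of Lemma \ref{lemma:ineqmolec}. For (b)(i)$\Rightarrow$(ii) the quantitative steps you are missing are Lemma \ref{lemma:strexp} and the observation that, since $\max\{d(x,z_n),d(y,z_n)\}\geq d(x,y)/2$, at least one of the competitors $m_{x,z_n}$, $m_{z_n,y}$ stays at norm distance at least $1/2$ from $m_{x,y}$: saying the distances ``need not be small'' is not enough, you must exhibit for each $n$ a competitor that is provably far yet almost exposed. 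For (b)(ii)$\Rightarrow$(i) your plan is in the right spirit, but the functional actually used (see Lemma \ref{lemma:failZuniformly}, taken from \cite{gpr} and \cite{ikw}) is the average of the Ivakhno--Kadets--Werner function $g_{x,y}$ and $f_{x,y}(t)=\frac{d(x,y)}{2}\frac{d(t,y)-d(t,x)}{d(t,y)+d(t,x)}$, not a function of the form $\varphi(d(y,\cdot))-\varphi(d(x,\cdot))$; whether your ansatz can be pushed through is unclear and would need to be checked.

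The genuine gap is in part (a). Your whole argument funnels through the assertion that a preserved extreme point of $B_{\mathcal F(M)}$ is a point of weak-to-norm continuity of $B_{\mathcal F(M)}$. This cannot follow from the ``preserved'' hypothesis alone: in a general Banach space, preserved extreme points need not be points of continuity (otherwise, by the Lin--Lin--Troyanski characterisation of denting points as preserved extreme points of continuity, every preserved extreme point would be denting, which is false). It is precisely the nontrivial, free-space-specific content of the equality $\preext{B_{\mathcal F(M)}}=\dent{B_{\mathcal F(M)}}$, and you give no argument for it. Worse, your scheme is circular: the point-of-continuity property together with extremality is equivalent to dentability, which is the thing to be proved, while your proposed slice-shrinking argument for dentability presupposes that the point is already known to be a molecule $m_{x,y}$ --- which in your plan was to be deduced from the point-of-continuity property. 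The known proofs break this circle in a different order: one first shows directly that preserved extreme points are molecules (Weaver's argument via a weak$^*$ compactness/de~Leeuw-type representation in $\Lip(M,\R)^*$, refined in \cite{ag}), and only then proves that a preserved extreme molecule admits slices of arbitrarily small diameter. Your Milman step is correct as far as it goes (it places a preserved extreme point in the weak closure of $\Mol{M}$ inside $B_{\mathcal F(M)}$), but since $\Mol{M}$ is not convex its weak closure need not coincide with its norm closure, so the upgrade to norm membership and the dentability claim both require a substantive argument that is absent from the proposal.
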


According to \cite[p.\ 51]{wea5}, a metric space $M$ is said to be \emph{concave} if every molecule $m_{x,y}\in \Mol{M}$ is a preserved extreme point of $B_{\mathcal F(M)}$. Thanks to the characterisation of the preserved extreme points given in \cite{ag}, a metric space $M$ is concave if, and only if, for every $x,y\in M$ and every  $\varepsilon>0$, there exists $\delta>0$ such that the inequality
$$
d(x,z)+d(y,z)>d(x,y)+\delta
$$
holds for every $z\in M$ such that $\min\{d(x,z),d(y,z)\}\geq \varepsilon$. It is known that every H\"older metric space is a concave metric space \cite[p.~51]{wea5}.
Recall that a \emph{H\"older metric space} is $(M,d^\theta)$ where $(M,d)$ is a metric space and $0<\theta<1$.
We refer the reader to \cite{Kalton04} and \cite{wea5} for background on H\"older metric spaces and the structure
of their Lipschitz-free spaces. Moreover, \cite[Corollary 4.4]{ag} yields that a compact metric space $M$ is concave if and only if $d(x,z)+d(z,y)>d(x,y)$ for every $x,y,z$ distinct points in $M$, that is, if $[x,y]=\{x,y\}$ for every $x,y\in M$.

In general, examples of Banach spaces with a rich extremal structure are those with the RNP. Because of this reason and its relation with strongly norm attainment (see Theorem \ref{teornpdensidad}), let us exhibit some known examples of Lipschitz-free spaces with the RNP.

\begin{example}\label{ejernp}
The space $\mathcal F(M)$ has the RNP in the following cases:
\begin{enumerate}[(a)]
\item $M$ is uniformly discrete (i.e.\ $\inf_{x\neq y}d(x,y)>0$); \cite[Proposition 4.4]{Kalton04}.
\item $M$ is a countable compact metric space (since, in this case, $\mathcal F(M)$ is a separable dual Banach space); \cite[Theorem~2.1]{dalet}.
\item $M$ is a compact H\"older metric space (since, in this case, $\mathcal F(M)$ is a separable dual Banach space); \cite[Corollary~3.3.4]{wea5}.
\item $M$ is a closed subset of $\mathbb{R}$ with measure $0$ (since, in this case, $\mathcal F(M)$ is isometric to $\ell_1$);  \cite{Godard}.
\end{enumerate}
\end{example}

The following lemma, coming from \cite{LCthesis}, provides a useful estimate of the norm of differences of molecules. For completeness, we will include a proof of the result.

\begin{lemma}\label{lemma:ineqmolec} Let $M$ be a metric space and $x,y,u,v\in M$, with $x\neq y$ and $u\neq v$. Then
\[ \norm{m_{x,y}-m_{u,v}}\leq 2\frac{d(x,u)+d(y,v)}{\max\{d(x,y),d(u,v)\}}. \]
If, moreover, $\norm{m_{x,y}-m_{u,v}}<1$, then
\[ \frac{\max\{d(x,u), d(y,v)\}}{\min\{d(x,y), d(u,v)\}}\leq \norm{m_{x,y}-m_{u,v}}. \]
\end{lemma}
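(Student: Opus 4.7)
The first inequality follows from a telescoping trick. Assuming WLOG that $d(x,y)\geq d(u,v)$, the plan is to write
\[
m_{x,y}-m_{u,v}=\frac{(\delta(x)-\delta(u))-(\delta(y)-\delta(v))}{d(x,y)}+\left(\frac{1}{d(x,y)}-\frac{1}{d(u,v)}\right)(\delta(u)-\delta(v)),
\]
take norms using $\|\delta(s)-\delta(t)\|=d(s,t)$, and apply the reverse triangle inequality $|d(x,y)-d(u,v)|\leq d(x,u)+d(y,v)$ to the second summand. Both summands then contribute at most $(d(x,u)+d(y,v))/d(x,y)$, giving the factor $2$.

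For the second inequality, I will exploit the duality $\Lip(M,\R)=\mathcal{F}(M)^{\ast}$ and probe $p:=m_{x,y}-m_{u,v}$ with a small family of $1$-Lipschitz functions. Set $a:=d(x,y)$ and $b:=d(u,v)$; by the symmetry that sends $(x,y,u,v)$ to $(u,v,x,y)$ (which sends $p$ to $-p$), assume WLOG $a\leq b$, so that $\min\{a,b\}=a$. Pairing $p$ with each of the $1$-Lipschitz functions $d(\cdot,w)$ for $w\in\{x,y,u,v\}$ and using $|\langle f,p\rangle|\leq\|p\|<1$ will yield the strict monotonicity inequalities $d(x,u)<d(x,v)$ and $d(y,v)<d(y,u)$.

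Next, to force $d(x,u)<a$, I would test $p$ against the tent $f(z)=\max\{a-d(z,x),0\}$, which is $1$-Lipschitz with $f(x)=a$ and $f(y)=0$: if $d(x,u)\geq a$, then $d(x,v)\geq a$ by the previous step, so $f(u)=f(v)=0$, whence $\langle f,p\rangle=1$, contradicting $\|p\|<1$. A symmetric argument with $\max\{a-d(z,y),0\}$ yields $d(y,v)<a$. Armed with these, the last move is to pair $p$ with the refined tents $g(z):=\max\{d(x,u)-d(z,x),0\}$ and $h(z):=\max\{d(y,v)-d(z,y),0\}$. The strict inequalities above guarantee that $g$ vanishes on $\{y,u,v\}$ with $g(x)=d(x,u)$, while $h$ vanishes on $\{x,u,v\}$ with $h(y)=d(y,v)$, so
\[
\|p\|\geq \langle g,p\rangle=\frac{d(x,u)}{a},\qquad \|p\|\geq \langle -h,p\rangle=\frac{d(y,v)}{a},
\]
and taking the maximum closes the argument.

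The main obstacle is the intermediate step that forces $d(x,u)<a$ and $d(y,v)<a$: one has to check that, under the monotonicity derived from the distance-function probes, the plateau of each tent function really does vanish at both of the ``far'' points $u,v$ whenever $d(x,u)\geq a$ (respectively $d(y,v)\geq a$); once this delicate bookkeeping is accepted, the opening monotonicity step and the concluding pairing with the refined tents are routine scalar computations over the four-point set $\{x,y,u,v\}$.
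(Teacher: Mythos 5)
Your argument is correct, and for the first inequality it is essentially the paper's proof: your telescoping decomposition is precisely the standard proof of the normalization inequality $\bigl\Vert \tfrac{z}{\Vert z\Vert}-\tfrac{w}{\Vert w\Vert}\bigr\Vert \le 2\Vert z-w\Vert/\max\{\Vert z\Vert,\Vert w\Vert\}$ that the paper simply invokes with $z=\delta(x)-\delta(y)$, $w=\delta(u)-\delta(v)$.

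For the second inequality you use the same kind of test functions as the paper --- truncated distance functions (``tents'') centred at $x$ and at $y$, adjusted by a constant to vanish at the base point --- but you reach them through a three-stage bootstrap: distance-function probes give $d(x,u)<d(x,v)$ and $d(y,v)<d(y,u)$; tents of radius $a=\min\{d(x,y),d(u,v)\}$ then force $d(x,u)<a$ and $d(y,v)<a$; and only then do the tents of radius $d(x,u)$ and $d(y,v)$ deliver the bound. Every step checks out, including the ``delicate bookkeeping'' you flag: if $d(x,u)\ge a$ then $d(x,v)>d(x,u)\ge a$, so the plateau indeed misses both $u$ and $v$. The paper collapses all of this into a single move: set $r:=\min\{d(x,y),d(x,u)\}$ and $f(t)=\max\{r-d(t,x),0\}$. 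With that choice $f(y)=f(u)=0$ automatically, and the leftover value $f(v)\ge 0$ enters the pairing $\langle f,p\rangle$ with the favourable sign, so $\langle f,p\rangle\ge r/d(x,y)$ without any preliminary information; then $\Vert p\Vert<1$ forces $r<d(x,y)$, hence $r=d(x,u)$ and $d(x,u)/d(x,y)\le\Vert p\Vert$, and the other three quotients follow by permuting the roles of the pairs. The moral is that taking the plateau radius to be a \emph{minimum} of two distances makes the vanishing at $y$ and $u$ automatic and renders the value at $v$ harmless, so your monotonicity probes and the intermediate comparison with $a$, while correct, can be dispensed with entirely.
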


\begin{proof}
The first inequality follows from the well-known one
\[ \norm{\frac{z}{\norm{z}} - \frac{w}{\norm{w}} } \leq 2 \frac{\norm{z-w}}{\max\{\norm{z},\norm{w}\}}, \]
which holds for $z, w\neq 0$ in any Banach space, applied to $z=\delta(x)-\delta(y)$ and $w=\delta(u)-\delta(v)$.

To prove the second inequality, we assume that $\Vert m_{x,y}-m_{u,v}\Vert<1$ and take $r:=\min\{d(x,y),d(x,u)\}$. We define $f(t):=\max\{r-d(t,x),0\}$ for every $t\in M$ and $g:=f-f(0)$. It follows that $g\in \Lip(M,\R)$ with $\Vert g\Vert_L\leq 1$, so we get that
$$\Vert m_{x,y}-m_{u,v}\Vert\geq \vert \langle g,m_{x,y}-m_{u,v}\rangle\vert\geq \frac{r}{d(x,y)},$$
from where $r<d(x,y)$. This implies that $r=d(x,u)$ and
$$\frac{d(x,u)}{d(x,y)}\leq \Vert m_{x,y}-m_{u,v}\Vert.$$
Changing the roles of the pairs, we get the proof of the lemma.
\end{proof}

We also need the following result coming from \cite{kaufmannPreprint}, which is not included in the final version of that paper \cite{kaufmann}. Given a family  $\{X_\gamma\colon \gamma\in\Gamma\}$ of Banach spaces, we will denote by $\left[\bigoplus\nolimits_{\gamma\in\Gamma}X_\gamma\right]_{\ell_1}$ the $\ell_1$-sum of the family.

\begin{proposition}[Proposition 5.1 in \cite{kaufmannPreprint}]\label{prop:kaufmann} Suppose that $M = \bigcup_{\gamma\in \Gamma} M_\gamma$  is a metric space with metric $d$, and suppose that there exists $0 \in M$ satisfying
\begin{enumerate}
\item $M_\gamma\cap M_\eta = \{0\}$ if $\gamma\neq\eta$, and
\item there exists $C\geq 1$ such that $d(x,0)+d(y,0)\leq Cd(x,y)$ for all $\gamma\neq \eta$, $x\in M_\gamma$ and $y\in M_\eta$.
\end{enumerate}
Then, $\mathcal F(M)$ is isomorphic to $\left[\bigoplus_{\gamma\in \Gamma} \mathcal F(M_\gamma)\right]_{\ell_1}$. If $C=1$ such an isomorphism can be chosen to be isometric.
\end{proposition}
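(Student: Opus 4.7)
The plan is to construct an explicit linear bijection and analyse it via duality. For each $\gamma\in\Gamma$, the inclusion $M_\gamma\hookrightarrow M$ is an isometry fixing the base point and therefore induces an isometric linear embedding $i_\gamma\colon\mathcal F(M_\gamma)\to\mathcal F(M)$. I would define
$$
\Phi\colon \left[\bigoplus_{\gamma\in\Gamma}\mathcal F(M_\gamma)\right]_{\ell_1} \longrightarrow \mathcal F(M), \qquad \Phi\bigl((\mu_\gamma)_\gamma\bigr):=\sum_{\gamma\in\Gamma} i_\gamma(\mu_\gamma),
$$
where the sum converges absolutely. The triangle inequality immediately gives $\Vert\Phi\Vert\leq 1$, and the image of $\Phi$ contains every generator $\delta(x)$ (take any $\gamma$ with $x\in M_\gamma$), so $\Phi$ has dense range. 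The goal is then to show that $\Phi$ is bounded below by $1/C$, which together with density yields a topological isomorphism with $\Vert\Phi^{-1}\Vert\leq C$; the isometric case $C=1$ will follow automatically.

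To achieve this, I would pass to the adjoint. Since $\mathcal F(M)^*=\Lip(M,\mathbb R)$ and the dual of an $\ell_1$-sum is the corresponding $\ell_\infty$-sum, a calculation on molecules shows that
$$
\Phi^*\colon \Lip(M,\mathbb R) \longrightarrow \left[\bigoplus_{\gamma\in\Gamma}\Lip(M_\gamma,\mathbb R)\right]_{\ell_\infty}, \qquad \Phi^*(f)=(f|_{M_\gamma})_\gamma.
$$
Clearly $\Vert\Phi^*\Vert\leq 1$, and $\Phi^*$ is injective because $M=\bigcup_\gamma M_\gamma$. The key step is to produce a right inverse of norm at most $C$: given a family $(f_\gamma)_\gamma$ in the $\ell_\infty$-sum, glue the pieces to define $f\colon M\to\mathbb R$ by $f(x):=f_\gamma(x)$ whenever $x\in M_\gamma$. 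This is unambiguous by condition~(1), because the only potential overlap is at $0$ and every $f_\gamma$ vanishes there.

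The main technical point is to verify that $f$ is Lipschitz with $\Vert f\Vert_L\leq C\sup_\gamma\Vert f_\gamma\Vert_L$. If $x,y$ lie in the same piece $M_\gamma$, the bound $|f(x)-f(y)|\leq \Vert f_\gamma\Vert_L\,d(x,y)$ is immediate. In the crucial cross case $x\in M_\gamma$, $y\in M_\eta$ with $\gamma\neq\eta$, using $f_\gamma(0)=f_\eta(0)=0$ and then condition~(2) gives
$$
|f(x)-f(y)| \leq \Vert f_\gamma\Vert_L\,d(x,0) + \Vert f_\eta\Vert_L\,d(y,0) \leq \Bigl(\sup_\gamma\Vert f_\gamma\Vert_L\Bigr)\bigl(d(x,0)+d(y,0)\bigr) \leq C\Bigl(\sup_\gamma\Vert f_\gamma\Vert_L\Bigr)\, d(x,y).
$$
Hence $\Phi^*$ is a topological isomorphism with $\Vert \Phi^*\Vert\leq 1$ and $\Vert(\Phi^*)^{-1}\Vert\leq C$.

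Standard duality then transfers this back to $\Phi$: surjectivity of $\Phi^*$ forces $\Phi$ to be bounded below (hence injective with closed range), while injectivity of $\Phi^*$ forces $\Phi$ to have dense range; together these give that $\Phi$ is a topological isomorphism, with $\Vert\Phi^{-1}\Vert=\Vert(\Phi^*)^{-1}\Vert\leq C$. When $C=1$, the simultaneous inequalities $\Vert\Phi\Vert\leq 1$ and $\Vert\Phi^{-1}\Vert\leq 1$ force equality throughout, so $\Phi$ is an isometric isomorphism. I expect the only real obstacle to be the Lipschitz estimate for the glued $f$; condition~(2) is used there in exactly the right way, which also makes transparent why that hypothesis is precisely what controls the isomorphism constant.
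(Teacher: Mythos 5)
The paper does not prove this proposition at all: it is imported verbatim as Proposition 5.1 of the cited Kaufmann preprint, so there is no in-paper argument to compare against. Your proof is correct and complete, and it is the standard argument for this fact (and essentially the one in the cited source): the only substantive points are the gluing estimate $|f(x)-f(y)|\leq C\bigl(\sup_\gamma\Vert f_\gamma\Vert_L\bigr)d(x,y)$ in the cross case, which you verify correctly, and the quantitative duality step, which can be made explicit by noting that for any $g=(f_\gamma)_\gamma$ with $\sup_\gamma\Vert f_\gamma\Vert_L\leq 1$ the glued $f$ satisfies $|\langle g,\mu\rangle|=|\langle f,\Phi\mu\rangle|\leq C\Vert\Phi\mu\Vert$, whence $\Vert\mu\Vert\leq C\Vert\Phi\mu\Vert$.
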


The previous result motivates the following definition: if $M$ is a metric space which can be written as $M = \bigcup_{\gamma\in \Gamma} M_\gamma$ satisfying (1) and (2) in the statement of the previous proposition for $C=1$, we say that $M$ is the \emph{$\ell_1$-sum} of the family  $\{M_\gamma\}_{\gamma\in \Gamma}$.

The next lemma provides a criterion to get weak convergence of sequences of Lipschitz functionals and maps, for which the weak topology does not have any easy description. It is inspired by \cite[Lemma~2.4]{kms}, improves \cite[Corollary 2.5]{kms} and will be the key to prove the main results of Sections \ref{sectidensidadebil} and \ref{sectiocta}.

\begin{lemma}\label{lemmaweaknull}
Let $M$ be a pointed metric space, let $Y$ be a Banach space, and let $\{f_n\}$ be a sequence of functions in the unit ball of $\Lip(M,Y)$. For each $n\in \mathbb N$, we write $U_n:=\{x\in M \colon f_n(x)\neq 0\}$ for the support of $f_n$. If $U_n\cap U_m = \emptyset$ for every $n\neq m$, then the sequence $\{f_n\}$ is weakly null.
\end{lemma}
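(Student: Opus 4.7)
The plan is to proceed by contradiction. Suppose $\{f_n\}$ fails to be weakly null in $\Lip(M,Y)$; then there exist $\Phi\in \Lip(M,Y)^*$, a subsequence (which I will re-index as $\{f_n\}$), and $\varepsilon>0$ such that $|\Phi(f_n)|\geq \varepsilon$ for every $n$. After passing to a further subsequence on which $\Phi(f_n)$ has constant sign and, if necessary, replacing $f_n$ by $-f_n$ (an operation that preserves both the bound $\|f_n\|_L\leq 1$ and the disjointness of the supports $U_n$), I may assume that $\Phi(f_n)\geq \varepsilon$ for every $n$.

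The crux of the proof will be the uniform Lipschitz estimate
$$
\Bigl\|\sum_{n=1}^{N} f_n\Bigr\|_L \leq 2 \qquad (N\in\mathbb N).
$$
To establish it, write $g_N:=\sum_{n=1}^{N} f_n$ and fix distinct points $x,y\in M$. The disjointness of the supports $U_n$ forces at most one index $i\leq N$ with $f_i(x)\neq 0$ and at most one index $j\leq N$ with $f_j(y)\neq 0$. When $i=j$, or when only one of $x,y$ meets some $U_n$, or when neither does, the estimate $\|g_N(x)-g_N(y)\|\leq d(x,y)$ follows at once from $\|f_i\|_L\leq 1$. In the remaining case $i\neq j$, I would exploit $f_i(y)=0=f_j(x)$ together with the triangle inequality:
$$
\|g_N(x)-g_N(y)\| = \|f_i(x)-f_j(y)\| \leq \|f_i(x)-f_i(y)\|+\|f_j(x)-f_j(y)\| \leq 2\,d(x,y).
$$

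Combining both facts I will obtain the contradiction
$$
N\varepsilon \;\leq\; \sum_{n=1}^{N}\Phi(f_n) \;=\; \Phi(g_N) \;\leq\; \|\Phi\|\,\|g_N\|_L \;\leq\; 2\,\|\Phi\|,
$$
which is impossible as soon as $N>2\|\Phi\|/\varepsilon$; this will complete the proof.

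The main obstacle to overcome is that the dual $\Lip(M,Y)^*$ admits no workable description beyond the scalar case (where it coincides with $\mathcal F(M)^{**}$, itself intricate), so the whole argument must be insensitive to the specific functional $\Phi$ produced by Hahn--Banach. What makes this feasible is the elementary algebraic observation that finitely many norm-at-most-$1$ Lipschitz maps with pairwise disjoint supports sum to a Lipschitz map of norm at most $2$: the linear growth of $\Phi(g_N)$ in $N$ simply cannot coexist with a uniform bound on $\|g_N\|_L$. The constant $2$ cannot be improved in general (an equilateral triangle furnishes equality), but any uniform bound is enough.
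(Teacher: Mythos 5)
Your proof is correct and rests on the same key estimate as the paper's: finitely many Lipschitz maps of norm at most $1$ with pairwise disjoint supports sum to a map of Lipschitz norm at most $2$, because any pair of points meets at most two of the supports. The only difference is cosmetic and lies in the final step: the paper packages the estimate (with arbitrary coefficients $a_j$) as a bounded operator $T\colon c_0\to\Lip(M,Y)$ with $Te_n=f_n$ and invokes weak-to-weak continuity together with the weak nullity of $(e_n)$ in $c_0$, whereas you reach the same conclusion by contradiction, playing the linear growth of $\Phi\bigl(\sum_{n\leq N}f_n\bigr)$ against the uniform bound $\bigl\|\sum_{n\leq N}f_n\bigr\|_L\leq 2$.
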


\begin{proof}
We will show that for every finite collection of reals $\{a_j\}_{j=1}^n$, we have
\[  \norm{\sum_{j=1}^n a_j f_j}_L \leq 2\max_j |a_j|\]
and so $Te_n:= f_n$ defines a bounded linear operator from $c_0$ to $\Lip(M,Y)$.

To this end, denote $f= \sum_{j=1}^n a_j f_j$. Take $x, y\in M$ with $x\neq y$, and let us give an upper estimate for $\frac{\|f(x)-f(y)\|}{d(x,y)}$. Since the supports of the functions $\{f_n\}$ are pairwise disjoint, there are $j_1,j_2\in \{1,\ldots, n\}$ such that $\{x,y\}\cap U_j = \emptyset$ if $j\in\{1,\ldots,n\}\setminus\{j_1,j_2\}$. Therefore,
\begin{align*}
\frac{\|f(x)-f(y)\|}{d(x,y)} &= \frac{\|a_{j_1}(f_{j_1}(x)-f_{j_1}(y))+a_{j_2}(f_{j_2}(x)-f_{j_2}(y))\|}{d(x,y)} \\
&\leq |a_{j_1}|+|a_{j_2}|\leq 2\max_j |a_j|.
\end{align*}
This shows that the operator $T$ defined above is bounded. Thus, it is also weak-to-weak continuous and the conclusion follows.
\end{proof}

Finally, it is convenient to recall an important tool to construct Lipschitz functions: the classical McShane's extension theorem. It says that if $N \subseteq M$ and $f \colon N \longrightarrow \mathbb{R}$ is a Lipschitz function, then there is an extension to a Lipschitz function $F \colon M \longrightarrow \mathbb{R}$ with the same Lipschitz constant; see for example \cite[Theorem~1.5.6]{wea5}.

\section{New negative results}\label{sect:negative}

In this section we will exhibit new examples of metric spaces $M$ such that $\SA(M,\R)$ is not dense in $\Lip(M,\R)$. As we commented in the introduction, it was shown in \cite[Example 2.1]{kms} that $\SA([0,1],\R)$ is not dense in $\Lip([0,1],\R)$ and that this is extended in the same paper to all metrically convex pointed metric spaces \cite[Theorem 2.3]{kms}. Let us introduce some notation. A metric space $(M,d)$ is said to be a \textit{length space} if $d(x,y)$ is equal to the infimum of the length of the rectifiable curves joining $x$ and $y$ for every pair of points $x,y\in M$. In the case that such an infimum is actually a minimum, it is said that $M$ is \textit{geodesic} (or \emph{metrically convex}). It is clear that every geodesic space is a length space, but Example 2.4 in \cite{ikw} shows that the converse is not true. On the other hand, length spaces have been recently considered in \cite{gpr} where it is proved that a metric space $M$ is a length space if, and only if, $\Lip(M,\R)$ has the Daugavet property \cite[Theorem 3.5]{gpr}. Note by passing that for a complete metric space $M$, being a length space is also equivalent to the fact that every Lipschitz function on $M$ approximates its Lipschitz norm at points which are arbitrarily close (that is, $M$ is \emph{local}), see \cite[Proposition~3.4]{gpr}.

In this section we will consider two different generalisations of the fact from \cite{kms} that $\SA(M,\R)$ is not dense in $\Lip(M,\R)$ when $M$ is metrically convex. Our first aim is to replace metrically convex with being a length space in this result. To this end, we will need the following technical lemma, which is a generalisation of Lemma~2.2 in \cite{kms}.
	
	\begin{lemma}\label{lemlength}
Let $M$ be a pointed metric space, let $f \in \SA(M,\R)$ which attains its norm at a pair $(p,q)$ of different elements of $M$, let $\varepsilon>0$, and let $\alpha_\varepsilon$ be a rectifiable curve in $M$ joining $p$, $q$ such that
		\[ \length(\alpha_\varepsilon)\leq d(p,q) + \varepsilon.\]
Then, we have that
		\[ |f(z_1)-f(z_2)| \geq \lVert f \rVert_L (d(z_1,z_2)-\varepsilon) \quad \forall \, z_1, z_2 \in \alpha_\varepsilon.\]
	\end{lemma}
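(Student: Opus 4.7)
My plan is to reduce everything to a telescoping identity along the curve. Set $L:=\|f\|_L$ and, replacing $f$ by $-f$ if necessary, assume without loss of generality that
\[
f(p)-f(q)=L\,d(p,q).
\]
Parametrise $\alpha_\varepsilon$ as a continuous map $\alpha\colon[a,b]\to M$ with $\alpha(a)=p$, $\alpha(b)=q$. Pick parameters $t_1,t_2\in[a,b]$ with $\alpha(t_i)=z_i$; we may assume $t_1\le t_2$ (otherwise swap $z_1$ and $z_2$, which only flips a sign inside the absolute value). Split the curve into three consecutive arcs $\alpha^{(1)}:=\alpha\restricted{[a,t_1]}$, $\alpha^{(2)}:=\alpha\restricted{[t_1,t_2]}$, $\alpha^{(3)}:=\alpha\restricted{[t_2,b]}$. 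Length is additive under such concatenation, so
\[
\length(\alpha^{(1)})+\length(\alpha^{(2)})+\length(\alpha^{(3)})=\length(\alpha_\varepsilon)\leq d(p,q)+\varepsilon.
\]

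The second step is the telescoping
\[
L\,d(p,q)=f(p)-f(q)=\bigl(f(p)-f(z_1)\bigr)+\bigl(f(z_1)-f(z_2)\bigr)+\bigl(f(z_2)-f(q)\bigr).
\]
For the two outer differences I apply the Lipschitz estimate combined with the obvious bound $d(x,y)\le\length(\beta)$ for any rectifiable curve $\beta$ joining $x$ and $y$, to get
\[
f(p)-f(z_1)\leq L\,d(p,z_1)\leq L\,\length(\alpha^{(1)}),\qquad
f(z_2)-f(q)\leq L\,d(z_2,q)\leq L\,\length(\alpha^{(3)}).
\]
Inserting these into the telescoping identity and using the length bound above gives
\[
f(z_1)-f(z_2)\geq L\bigl(d(p,q)-\length(\alpha^{(1)})-\length(\alpha^{(3)})\bigr)\geq L\bigl(\length(\alpha^{(2)})-\varepsilon\bigr)\geq L\bigl(d(z_1,z_2)-\varepsilon\bigr),
\]
where the last inequality is again $\length(\alpha^{(2)})\geq d(z_1,z_2)$. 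Since the left-hand side is bounded below by its absolute value's negative, we conclude $|f(z_1)-f(z_2)|\geq L(d(z_1,z_2)-\varepsilon)$.

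I do not expect a serious obstacle here: the only care needed is the bookkeeping of orientations (which is why the initial WLOG on the sign of $f(p)-f(q)$ and on the ordering $t_1\leq t_2$ is set up at the very start), and the use of the additivity of length together with the trivial lower bound $\length(\beta)\geq d(\text{endpoints of }\beta)$. The key insight worth stressing is that near a norm-attaining pair $(p,q)$ a Lipschitz function has essentially no slack along any near-geodesic curve joining $p$ and $q$: any loss on the middle arc $\alpha^{(2)}$ would have to be compensated on the outer arcs, whose combined length is at most $\varepsilon$ longer than $d(p,q)-d(z_1,z_2)$.
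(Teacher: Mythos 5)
Your proof is correct and follows essentially the same route as the paper's: both rest on the telescoping identity $f(p)-f(q)=(f(p)-f(z_1))+(f(z_1)-f(z_2))+(f(z_2)-f(q))$ together with the observation that $d(p,z_1)+d(z_1,z_2)+d(z_2,q)\leq\length(\alpha_\varepsilon)\leq d(p,q)+\varepsilon$, which you obtain by splitting the curve into three arcs while the paper invokes the definition of length directly; your sign normalisation at the start replaces the paper's use of the reverse triangle inequality. Only the justification of the very last step is worded backwards --- what you need is simply $|f(z_1)-f(z_2)|\geq f(z_1)-f(z_2)$, i.e.\ $|x|\geq x$, not $x\geq-|x|$ --- but the step itself is trivially valid.
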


	\begin{proof}
		Fix $z_1$, $z_2 \in \alpha_\varepsilon$. By the definition of length of a curve, we have that
		\[ d(p,q)\leq d(p,z_1)+d(z_1,z_2)+d(z_2,q)\leq \length(\alpha_\varepsilon)\leq d(p,q)+\varepsilon.\]
		Consequently,
		\begin{align*}
		|f(z_1)-f(z_2)|&=|(f(p)-f(q))- ((f(p)-f(z_1))+(f(z_2)-f(q)))|\\
		&\geq |f(p)-f(q)|-|f(p)-f(z_1)|-|f(z_2)-f(q)| \\
        &\geq |f(p)-f(q)|-\lVert f \rVert_L d(p,z_1) - \lVert f \rVert_L d(z_2,q)\\
        &= \lVert f \rVert_L (d(p,q)-d(p,z_1)-d(z_2,q)) \geq \lVert f \rVert_L (d(z_1,z_2)-\varepsilon).\qedhere
		\end{align*}
\end{proof}

We are now ready to state the desired result.
	
	\begin{theorem}\label{teo:length}
		Let $M$ be a complete length pointed metric space. Then, $\SA(M,\R)$ is not dense in $\Lip(M,\R)$.
	\end{theorem}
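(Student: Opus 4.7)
The plan is to follow the strategy of \cite[Theorem~2.3]{kms} (the metrically convex case), replacing true geodesics by the almost-geodesics that a length space provides, and using Lemma~\ref{lemlength} as the quantitative substitute. The proof proceeds by contradiction: I assume that $\SA(M,\R)$ is dense in $\Lip(M,\R)$ and produce a specific $f_0\in \Lip(M,\R)$ with $\|f_0\|_L=1$ that cannot be approximated.

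A natural candidate is $f_0 := \psi\circ d(\cdot,x_0)$ for a convenient reference point $x_0\in M$ and a $1$-Lipschitz oscillating $\psi\colon[0,\infty)\to\R$ chosen (for example a zigzag with geometrically decreasing tooth sizes, or a smooth analogue) so that there is a strictly positive modulus $\rho$ with
\[
|\psi(s)-\psi(t)|\leq |s-t|-\rho(|s-t|)
\]
on a suitable range. Since $\psi$ is $1$-Lipschitz, $\|f_0\|_L\leq 1$, and the length structure of $M$ allows one to produce near-aligned triples $(x,y,x_0)$ which give $\|f_0\|_L=1$.

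Given $g\in\SA(M,\R)$ with $\|g-f_0\|_L<\varepsilon$ that strongly attains its norm at some pair $(p,q)$ with $p\neq q$, fix $\eta>0$ small and use the length hypothesis to pick a rectifiable curve $\alpha_\eta$ joining $p$ and $q$ with $\length(\alpha_\eta)\leq d(p,q)+\eta$. Lemma~\ref{lemlength} applied to $g$ gives
\[
|g(z_1)-g(z_2)|\geq \|g\|_L\bigl(d(z_1,z_2)-\eta\bigr)\qquad \forall\, z_1,z_2\in\alpha_\eta,
\]
and combining this with $\|g\|_L\geq 1-\varepsilon$ and the estimate $|g(z_1)-g(z_2)|\leq |f_0(z_1)-f_0(z_2)|+\varepsilon\,d(z_1,z_2)$ yields
\[
|f_0(z_1)-f_0(z_2)|\geq (1-2\varepsilon)\, d(z_1,z_2)-\eta\qquad \forall\, z_1,z_2\in\alpha_\eta.
\]
That is, $f_0$ is forced to be almost metric-affine on $\alpha_\eta$. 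Substituting $f_0=\psi\circ d(\cdot,x_0)$ and selecting $z_1,z_2\in\alpha_\eta$ for which $|\psi(d(z_1,x_0))-\psi(d(z_2,x_0))|$ is strictly smaller than $d(z_1,z_2)-\rho(d(z_1,z_2))$ contradicts this lower bound, provided $\varepsilon$ and $\eta$ are small enough.

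The main obstacle is the construction of $f_0$, since the pair $(p,q)$ at which $g$ attains its norm is not under our control: it can be at any location and any metric scale, and by the local property \cite[Proposition~3.4]{gpr} the Lipschitz norm is already approached on arbitrarily close pairs, so the approximating $g$ may strongly attain at pairs of small distance. Therefore the oscillation built into $\psi$ must be effective at every scale, which rules out a single-scale zigzag and suggests a multi-scale construction (a countable sum of zigzags at geometrically decreasing scales, or a Cantor-staircase-type $\psi$). One also has to rule out the possibility that $\alpha_\eta$ stays close to a level set of $d(\cdot,x_0)$, where $f_0$ would be nearly constant; handling this case may require either several reference points or a preliminary reduction (e.g.\ restricting to $M\cap B(x_0,R)$ for a cleverly chosen $R$ using the length hypothesis). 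This is the delicate step of the argument.
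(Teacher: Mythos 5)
Your skeleton is the paper's: argue by contradiction, let the approximant $g$ strongly attain its norm at $(p,q)$, join $p$ to $q$ by a curve of length $\leq d(p,q)+\eta$, and use Lemma~\ref{lemlength} to force the target function to be almost isometric along that curve; the target function has the form $\psi\circ u$ with $u$ a norm-one real-valued Lipschitz map onto an interval (the paper builds $u$ by McShane-extending a normalised function defined on a curve; your $d(\cdot,x_0)$ would serve the same purpose). However, the heart of the proof is precisely the construction of $\psi$, and you leave it open while flagging it as ``the delicate step''. Moreover, the quantitative frame you propose for it --- a strictly positive modulus $\rho$ with $|\psi(s)-\psi(t)|\leq|s-t|-\rho(|s-t|)$ --- is the wrong mechanism: since you also need $\|\psi\|_L=1$, any such modulus must satisfy $\rho(r)/r\to 0$ along some sequence, whereas to contradict the lower bound $|f_0(z_1)-f_0(z_2)|\geq(1-2\varepsilon)d(z_1,z_2)-\eta$ for a \emph{fixed} $\varepsilon$ (fixed, because you must exhibit a uniform positive distance from $\SA(M,\R)$) you need a deficiency of size at least $2\varepsilon\,|s_1-s_2|$ at a scale you do not control. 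A scale-uniform modulus cannot deliver this.

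What the paper does instead is take $A\subseteq[a,b]$ closed, nowhere dense and of positive Lebesgue measure, and $\psi$ the primitive of $\chi_A$, so that $\|\psi\|_L=1$ (by Lebesgue density) while $\psi$ is \emph{exactly constant on some subinterval of every nondegenerate interval}. The order of quantifiers then saves the day: first the norming pair $(p,q)$ of $g$ determines the window $(u(p),u(q))$ (nonempty because, as you can check, $g$ attaining its norm at $(p,q)$ together with $\|g-f_0\|_L<\varepsilon$ forces $|f_0(p)-f_0(q)|\geq(1-2\varepsilon)d(p,q)$, so $u(p)\neq u(q)$ --- this also disposes of your ``level set'' worry without any extra reference points); then nowhere density of $A$ produces a constancy interval $[c,d]\subseteq(u(p),u(q))$; and only \emph{then} is the curve's excess length $\eta$ chosen, smaller than a quantity proportional to $|d-c|$. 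Picking $\tilde z_1,\tilde z_2$ on the curve with $u(\tilde z_1)=c$, $u(\tilde z_2)=d$, the left-hand side $|f_0(\tilde z_1)-f_0(\tilde z_2)|$ vanishes identically while Lemma~\ref{lemlength} makes the right-hand side strictly positive, a contradiction. Your proposal gestures at a ``Cantor-staircase-type $\psi$'' but does not carry out this construction nor the quantifier bookkeeping that makes a single function work at every location and scale, so as written there is a genuine gap.
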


	\begin{proof}
		Fix $\delta>0$, $x_0\in M\setminus\{0\}$. Let us consider a curve $$\gamma_\delta \colon [0,(1+\delta)d(0,x_0)] \longrightarrow M$$ joining $0$ and $x_0$.

		Now, let us consider a Lipschitz function  $u_0\colon \gamma_\delta([0,(1+\delta)d(0,x_0)]) \longrightarrow \mathbb{R}$ such that $u_0(0)=0$, $u_0(x_0)=1$. Since $\gamma_\delta([0,(1+\delta)d(0,x_0)])$ is compact and connected, we have that $u_0(\gamma_\delta([0,(1+\delta)d(0,x_0)]))$ is a compact connected subset of $\mathbb{R}$, i.e.\ $u_0(\gamma_\delta([0,(1+\delta)d(0,x_0)]))=[a_0,b_0]$ for certain $a_0$, $b_0 \in \mathbb{R}$. We will write
		\[ a=\frac{a_0}{\|u_0\|_L}, \quad b=\frac{b_0}{\|u_0\|_L}, \quad \frac{u_0}{\|u_0\|_L} \colon \gamma_\delta([0,(1+\delta)d(0,x_0)])\longrightarrow[a,b]. \]
		We can apply McShane's extension theorem to $\frac{u_0}{\|u_0\|_L}$ to get a surjective function $u\colon M\longrightarrow[a,b]$ verifying that  $\|u\|_L=1$. 		Let $A\subseteq [a,b]$ be a nowhere dense closed set of positive Lebesgue measure. Consider $g \in \Lip([a,b],\R)$ the function whose derivate equals $\chi_A$ (characteristic function of $A$). 		We define $h=g\circ u\colon M\longrightarrow\mathbb{R}$. It is clear that $h(0)=g(u(0))=g(0)=0$ and $\|h\|_L=\|g\|_L=1$. Therefore, $h \in \Lip(M,\R)$.
		Now, take $f \in \SA(M,\R)$. We will show that $\lVert h-f \rVert_L \geq\frac{1}{2}$. To this end, assume the contrary, that is, \ 		\[ \lVert f-h \rVert_L< \frac{1}{2}. \]
		In particular, note that $\lVert f \rVert_L >\frac{1}{2}$. We know that there exist $p,q \in M$ with $p\neq q$ such that
		\[ \lVert f \rVert_L = \frac{|f(p)-f(q)|}{d(p,q)}.\]
		Suppose that $u(p)=u(q)$, hence $h(p)=h(q)$ and we have that
		\[\lVert h-f \rVert_L \geq \frac{|(h-f)(p)-(h-f)(q)|}{d(p,q)} =\frac{|f(p)-f(q)|}{d(p,q)}=\lVert f \rVert_L > \frac{1}{2}, \]
		a contradiction. Therefore, $u(p)\neq u(q)$. We can assume that $u(p)<u(q)$ without any loss of generality. By the construction of $g$, there exist $c$, $d \in \mathbb{R}$ such that the interval $[c,d]$ is contained in $ (u(p),u(q))$ and that $g$ is constant in $[c,d]$. Take $\varepsilon_0>0$ verifying
		\begin{equation}\label{condiepslengthnodens}
        0<\varepsilon_0 < |d-c|\frac{\lVert f \rVert_L - \lVert h-f \rVert_L}{\lVert f \rVert_L}
        \end{equation}
		and a rectifiable curve $\alpha_{\varepsilon_0}$ joining $p$ and $q$ such that
		\[ \length(\alpha_{\varepsilon_0})\leq d(p,q) +\varepsilon_0. \]
		Note that such a curve exists because $M$ is a length space. Let us write $\Lambda=\alpha_{\varepsilon_0}([0,d(p,q)+\varepsilon])\subseteq M$ and observe that
		\[[c,d]\subseteq (u(p),u(q))\subseteq u(\Lambda),\]
		so there exist $\tilde{z}_1$, $\tilde{z}_2 \in \Lambda$ such that $c=u(\tilde{z}_1)$, $d=u(\tilde{z}_2)$. Moreover, we have
		\[ |d-c|=|u(\tilde{z}_2)-u(\tilde{z}_1)| \leq d(\tilde{z}_2,\tilde{z}_1).\]
		Hence, if $z_1$, $z_2$ are different points of $ \Lambda$, using Lemma \ref{lemlength} we get that
		\[ \begin{split}
        |h(z_1)-h(z_2)| & \geq |f(z_1)-f(z_2)|-\lVert h-f \rVert_L d(z_1,z_2)\\
        & \geq  \lVert f \rVert_L d(z_1,z_2) - \lVert f \rVert_L \varepsilon_0 - \lVert h-f \rVert_L d(z_1,z_2)\\ & = \left( \lVert f \rVert_L - \lVert h-f \rVert_L -\frac{\varepsilon_0 \lVert f \rVert_L}{d(z_1,z_2)}\right) d(z_1,z_2).
        \end{split}\]
		Taking $z_1=\tilde{z}_1$, $z_2=\tilde{z}_2$ and applying the above inequality, we have
		\[\begin{split}
         |h(\tilde{z}_1)-h(\tilde{z}_2)| & \geq \left( \lVert f \rVert_L - \lVert h-f \rVert_L - \frac{\varepsilon_0 \lVert f \rVert_L}{d(\tilde{z}_1,\tilde{z}_2)}\right)d(\tilde{z}_1,\tilde{z}_2)\\
         & \mathop{>}\limits^{\mbox{\eqref{condiepslengthnodens}}}
        (\lVert f \rVert_L - \lVert h-f \rVert_L - (\lVert f \rVert_L - \lVert h-f \rVert_L)) d(\tilde{z}_1,\tilde{z}_2) =0.
        \end{split}\]
This implies that $h(\tilde{z}_1)\neq h(\tilde{z}_2)$ and so $g(c)\neq g(d)$, getting a contradiction with the fact that $g$ is constant in $[c,d]$.\end{proof}

Let us now consider another negative example which can be seen as a generalisation of the fact that $\SA([0,1],\R)$ is not dense in $\Lip([0,1],\R)$. This new generalisation will allow us to produce examples of metric spaces $M$ with very different geometric and topological properties for which $\SA(M,\R)$ is still not dense in $\Lip(M,\R)$. In order to do that, we need to introduce a class of metric spaces $M$, the so-called $\R$-trees. An \emph{$\real$-tree} is a metric space $T$ satisfying:
\begin{enumerate}
	\item for any points $x$, $y \in T$, there exists a unique isometry $\phi$ from the closed interval $[0,d(x,y)]$ into $T$ such that $\phi(0)=x$ and $\phi(d(x,y))=y$. Such isometry will be denoted by $\phi_{xy}$;
	\item any one-to-one continuous mapping $\varphi\colon [0,1]\longrightarrow T$ has the same range as the isometry $\phi$ associated to the points $x=\varphi(0)$ and $y=\varphi(1)$.
\end{enumerate}
Let us introduce some notation, coming from \cite{Godard}. Given points $x,y$ in an $\real$-tree $T$, it is usual to write $[x,y]$ to denote the range of $\phi_{xy}$, which is called a segment. We say that a subset $A$ of $T$ is \emph{measurable} whenever $\phi_{xy}^{-1}(A)$ is Lebesgue measurable for any $x$, $y \in T$. If $A$ is measurable and $S$ is a segment $[x,y]$, we write $\lambda_S(A)$ for $\lambda(\phi_{xy}^{-1}(A))$, where $\lambda$ is the Lebesgue measure on $\real$. We denote by $\mathcal{R}$ the set of those subsets of $T$ which can be written as a finite union of disjoint segments, and for $R=\bigcup_{k=1}^n S_k$ (with disjoint $S_k$) in $\mathcal{R}$, we put
\[ \lambda_R(A)=\sum_{k=1}^n \lambda_{S_k}(A). \]
Now, we can define the \emph{length measure} of a measurable subset $A$ of $T$ by
\[ \lambda_T(A)= \sup_{R\in \mathcal{R}} \lambda_R(A). \]
$\mathbb R$-trees were considered in \cite{Godard} in order to characterise those metric spaces $M$ for which $\mathcal F(M)$ is isometric to a subspace of $L_1$ as those  which isometrically embed into an $\mathbb R$-tree.

Here is the promised generalisation of the fact that $\SA([0,1],\R)$ is not dense in $\Lip([0,1],\R)$.

\begin{theorem}\label{nodensiR-trees}
Let $T$ be a pointed $\real$-tree and let $M$ be a closed subset of $T$ containing the origin. If $M$ has positive length measure, then $\SA(M,\R)$ is not dense in $\Lip(M,\R)$.
\end{theorem}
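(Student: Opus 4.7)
My plan is to adapt the construction used in the proof of Theorem \ref{teo:length} to the $\R$-tree setting, exploiting the uniqueness of geodesics in $T$. Since $\lambda_T(M)>0$, I first locate a segment $[x,y]\subseteq T$ with $\lambda_{[x,y]}(M)>0$; because $M$ is closed, by taking the infimum and supremum of $\phi_{xy}^{-1}(M)$ (both of which lie in $M$) I pass to a segment $[p_0,q_0]$ with $p_0,q_0\in M$ such that $A:=\phi_{p_0q_0}^{-1}(M)$ is a closed subset of $[0,L]$ of positive Lebesgue measure, where $L:=d(p_0,q_0)$. Inside $A$ I choose a closed nowhere dense subset $B$ with $\lambda(B)>0$ (either $B=A$ if $A$ has empty interior, or a fat Cantor set contained in an interval of $A$), and, as in the proof of Theorem \ref{teo:length}, define $g\in\Lip([0,L],\R)$ by $g(t):=\int_0^t\chi_B$, so that $\|g\|_L=1$ and $g$ is constant on every connected component of $[0,L]\setminus B$. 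The obstruction function is $h\in\Lip(M,\R)$ given by $h(x):=g(v(x))-g(v(0))$, where $v:=\phi_{p_0q_0}^{-1}\circ r$ and $r:T\to[p_0,q_0]$ is the $1$-Lipschitz nearest-point retraction onto the closed convex subtree $[p_0,q_0]$. Since $\phi_{p_0q_0}(B)\subseteq M$ and $v$ acts as the identity on $B$ via this parametrisation, applying the Lebesgue density theorem at density points of $B$ yields $\|h\|_L=1$.

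The goal is to show $\|f-h\|_L\geq\tfrac12$ for every $f\in\SA(M,\R)$. Assume $\|f-h\|_L<\tfrac12$, so $\|f\|_L>\tfrac12$, and let $(p,q)\in M\times M$ with $p\neq q$ be a pair at which $f$ attains its norm. If $h(p)=h(q)$, then
\[\|f-h\|_L\geq\frac{|f(p)-f(q)|}{d(p,q)}=\|f\|_L>\tfrac12,\]
a contradiction. Otherwise $g(v(p))\neq g(v(q))$; assuming without loss of generality that $v(p)<v(q)$, the set $B\cap[v(p),v(q)]$ must have positive Lebesgue measure, and since $B$ is nowhere dense a short argument (take $a^*:=\inf(B\cap[v(p),v(q)])$ and $b^*:=\sup(B\cap[v(p),v(q)])$, both in $B$, and pick a connected component $(a,b)$ of $B^c$ meeting $(a^*,b^*)$; the boundary points of this component necessarily lie in $B$) produces $a,b\in B$ with $v(p)\leq a<b\leq v(q)$ and $(a,b)\cap B=\emptyset$, so $g(a)=g(b)$. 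Setting $\tilde z_1:=\phi_{p_0q_0}(a)$ and $\tilde z_2:=\phi_{p_0q_0}(b)$ gives two points of $M$ (since $a,b\in B\subseteq A$) lying on the unique $T$-geodesic from $p$ to $q$, because $[v(p),v(q)]=[r(p),r(q)]\subseteq[p,q]$ in any $\R$-tree. The standard $\R$-tree geodesic equality then forces $|f(\tilde z_1)-f(\tilde z_2)|=\|f\|_L\,d(\tilde z_1,\tilde z_2)$, while $h(\tilde z_1)=h(\tilde z_2)$; hence $\|f-h\|_L\geq\|f\|_L>\tfrac12$, again a contradiction.

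The main obstacle, compared to the length case of Theorem \ref{teo:length}, is that $M$ need not be geodesically convex inside $T$, so the unique geodesic from $p$ to $q$ in $T$ may exit $M$ and there is no direct analog of the approximating curve supplied by Lemma \ref{lemlength}. This is circumvented by choosing $B$ inside $\phi_{p_0q_0}^{-1}(M)$, which ensures that the substitute points $\tilde z_1,\tilde z_2$ belong to $M$ automatically, while the $\R$-tree identity $[r(p),r(q)]\subseteq[p,q]$ replaces the length-space curve and lets the norm-attaining equality for $f$ transfer from the original pair $(p,q)$ to the auxiliary pair $(\tilde z_1,\tilde z_2)$, on which $h$ is forced to be constant.
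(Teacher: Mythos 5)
Your proof is correct and follows essentially the same strategy as the paper's: reduce to a segment of $T$ via the $1$-Lipschitz nearest-point projection, build an obstruction function that is constant on the gaps of a nowhere dense positive-measure set sitting inside (the preimage of) $M$, and use the $\R$-tree identity $d(p,q)=d(p,r(p))+d(r(p),r(q))+d(r(q),q)$ to transfer the norm-attainment of $f$ from $(p,q)$ to a pair of gap endpoints that automatically lie in $M$, where the obstruction function takes equal values. The only real difference is organisational: the paper splits into two cases according to whether $M\cap S$ contains a segment, citing the known result for $[0,1]$ in the first case and using $t\mapsto\lambda_T([x_0,t]\cap M)$ in the second, whereas you unify both cases by always selecting a nowhere dense closed $B\subseteq\phi_{p_0q_0}^{-1}(M)$ of positive measure (a fat Cantor set when an interval is present), which makes the argument self-contained at no extra cost.
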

	
\begin{proof}
	Note that, as $M$ has positive length measure, we can find a segment $S=[x_0,y_0] \subseteq T$ such that $\lambda_T(M\cap S) >0$.
	We distinguish two cases:\\
	First, assume that there exists a segment $[x_1,y_1]\subseteq M\cap S$. By Theorem 2.3 in \cite{kms} we know that there exists a function $f \in \Lip([x_1,y_1],\R)$ such that $\| f \|_L =1 $ and $ \| f-g\|_L \geq \frac{1}{2}$ holds for all $g \in \SA([x_1,y_1],\R)$. Consider $\pi_1 \colon T \longrightarrow [x_1,y_1]$ the metric projection, which satisfies that  \[d(x,y)=d(x,\pi_1(x))+d(\pi_1(x),y)\quad \forall \, x \in T, y \in [x_1,y_1]\]
(c.f.\ e.g.\ \cite[Chapter II.2]{bh}). Define the norm-one Lipschitz function $\tilde{f}\colon M \longrightarrow \mathbb{R}$ by $\tilde{f}(p)=[f\circ \pi_1](p) $ for every $ p \in M$, and suppose that there exists $g \in \SA(M,\R)$ such that $\| \tilde{f}-g \|_L < \frac{1}{2}$. If we take $x$, $y \in M$ satisfying that $x \neq y$ and $\langle g,m_{x,y}\rangle=\|g\|_L$, we get
	\[ \frac{1}{2} > \frac{|f(\pi_1(x))-f(\pi_1(y))- (g(x)-g(y))|}{d(x,y)}\geq \| g \|_L - \frac{|f(\pi_1(x))-f(\pi_1(y))|}{d(x,y)},\]
	so $\pi_1(x) \neq \pi_1(y)$. Using that $\langle g,m_{x,y} \rangle=\|g\|_L$,  Lemma 2.2 in \cite{kms} gives that $\langle g, m_{\pi_1(x),\pi_1(y)} \rangle=\| g \|_L$. Hence, $\restr{g}{[x_1,x_2]} \in \SA([x_1,y_1],\R)$. It follows from this that
	\[ \| \tilde{f}-g \|_L \geq \| f- \restr{g}{[x_1,y_1]} \|_L \geq \frac{1}{2},\]
	a contradiction.\\
	Now, assume that no segment is contained in $M\cap S$. Define the norm-one Lipschitz function $f \colon S \longrightarrow \mathbb{R}$ by \[f(t)=\int_{[x_0,t]} \chi_M (x)\, dx=\lambda_T([x_0,t]\cap M) \]
    for all $t \in [x_0,y_0]$. As above, define $\tilde{f}\colon M \longrightarrow \mathbb{R}$ by $\tilde{f}(p)=[f\circ \pi_2](p)$ for every $p \in M$, where $\pi_2 \colon M \longrightarrow S$ is the metric projection onto $S$.
    Again, assume that there exists $g \in \SA(M,\R)$ such that $\|g-\tilde{f}\|_L <\frac{1}{2}$. Take $x, y \in M$ such that $x\neq y$ and $\langle g, m_{x,y} \rangle = \| g \|_L$. Then, using the same argument as above, we deduce that $\pi_2(x)\neq \pi_2(y)$. Now, since $[\pi_2(x),\pi_2(y)]\nsubseteq M\cap S$ by the assumption, we can find distinct points $x_2$, $y_2 \in M$ such that $[x_2,y_2] \subseteq \, ]\pi_2(x),\pi_2(y)[ \, \setminus(M\cap S)$. Recall that $\langle g, m_{x,y} \rangle=\|g\|_L$ and this implies that $\langle g, m_{x_2,y_2} \rangle=\|g\|_L$ by Lemma 2.2 in \cite{kms}. On the other hand, note that
	\[ \tilde{f}(x_2)=f(x_2)=\lambda_T([x_0,x_2]\cap M)=\lambda_T([x_0, y_2] \cap M)=f(y_2)=\tilde{f}(y_2).\]
	Therefore, we obtain
	\[ \frac{1}{2} > \|g-\tilde{f}\|_L \geq \langle g-\tilde{f}, m_{x_2,y_2} \rangle =\langle g, m_{x_2,y_2}\rangle=\|g\|_L > \frac{1}{2},\]
	getting again a contradiction. Consequently, the set $\SA(M,\R)$ is not dense in $\Lip(M,\R)$, as desired.\end{proof}

As a particular case, we obtain the following corollary.

\begin{corollary}\label{cor[0,1]}
Let $M$ be a closed pointed subset of $[0,1]$ whose Lebesgue measure is positive. Then $\SA(M,\R)$ is not dense in $\Lip(M,\R)$.
\end{corollary}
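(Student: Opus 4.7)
The plan is to apply Theorem \ref{nodensiR-trees} directly, taking the ambient $\R$-tree to be $[0,1]$ itself. Two routine verifications are needed: that $([0,1],|\cdot|)$ really is an $\R$-tree in the sense of the definition given above the theorem, and that the induced length measure $\lambda_T$ coincides with Lebesgue measure $\lambda$ on $[0,1]$.

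For the first point I would proceed as follows. Given $x,y\in[0,1]$ with $x\leq y$, the affine map $\phi_{xy}\colon[0,y-x]\to[0,1]$, $t\mapsto x+t$, is visibly an isometry with $\phi_{xy}(0)=x$ and $\phi_{xy}(y-x)=y$, and any other isometry satisfying these endpoint conditions would have to agree with $\phi_{xy}$ since an isometry of a compact interval into $\R$ is determined by its values at two points. This takes care of condition (1). For condition (2), any injective continuous map $\varphi\colon[0,1]\to[0,1]$ is monotone by the intermediate value theorem, so its image is exactly the closed subinterval with endpoints $\varphi(0)$ and $\varphi(1)$, which is precisely the image of the associated isometry.

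For the second point, since every segment $S=[a,b]\subseteq[0,1]$ carries an affine parametrising isometry, one has $\lambda_{S}(A)=\lambda(A\cap[a,b])$ for every measurable $A\subseteq[0,1]$. Taking $R=[0,1]\in\mathcal{R}$ immediately gives $\lambda_{R}(A)=\lambda(A)$, and no element of $\mathcal{R}$ can produce a larger value since any disjoint union of subintervals of $[0,1]$ has total Lebesgue measure at most $\lambda([0,1])$; hence $\lambda_{T}(A)=\lambda(A)$ for every measurable set.

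With these two identifications in hand, the hypothesis that $M$ is a closed subset of $[0,1]$ of positive Lebesgue measure translates verbatim into the hypothesis of Theorem \ref{nodensiR-trees} (after choosing the distinguished point of the $\R$-tree to coincide with the distinguished point of $M$, which is legitimate by the remark at the start of the introduction that the choice of base point does not affect the isometric type of the Lipschitz spaces involved). The stated conclusion follows at once. I do not anticipate any genuine obstacle: the whole argument amounts to unpacking the $\R$-tree definition and checking that the abstract length measure agrees with the familiar Lebesgue measure in this very concrete setting.
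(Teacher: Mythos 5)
Your proposal is correct and is exactly the argument the paper has in mind: the corollary is stated as "a particular case" of Theorem \ref{nodensiR-trees}, with $T=[0,1]$ viewed as an $\R$-tree whose length measure is Lebesgue measure. Your two verifications (that $[0,1]$ satisfies the $\R$-tree axioms and that $\lambda_T=\lambda$ on $[0,1]$) are the routine details the paper leaves implicit, and both are carried out correctly.
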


\begin{remark}\label{remaejerarosRtree}{\slshape
Notice that the examples of metric spaces $M$ such that $\SA(M,\R)$ is not dense in $\Lip(M,\R)$ provided by Theorem \ref{teo:length} (and so by \cite[Theorem 2.3]{kms}) have very strong topological properties. For instance, it is clear that length metric spaces are arc-connected and, in particular, do not have isolated points. Nevertheless, Corollary \ref{cor[0,1]} produces quite different kind of such examples. For example, let $M$ be any nowhere dense subset of $[0,1]$ whose Lebesgue measure is positive (e.g.\ any so-called ``fat'' Cantor set). Corollary \ref{cor[0,1]} implies that $\SA(M,\R)$ is not dense in $\Lip(M,\R)$, and $M$ is totally disconnected.}
\end{remark}

As a consequence of Theorem \ref{nodensiR-trees}, we can characterise when $\SA(M,\R)$ is dense in $\Lip(M,\R)$ for compact subsets of $\mathbb R$-trees. Indeed, if $M$ is a compact subset of an $\real$-tree such that $\lambda_T(M)=0$, then $\mathcal{F}(M)$ is isometric to a subspace of $\ell_1$  \cite[Proposition~8]{dkp}, so $\mathcal{F}(M)$ has the RNP and thus, $\SA(M,\R)$ is dense in $\Lip(M,\R)$ by \cite[Proposition 7.4]{lppr} (see Theorem \ref{teornpdensidad} below). Consequently, the following corollary follows.
	
\begin{corollary}\label{caraR-tree} Let $T$ be a pointed $\real$-tree and let $M$ be a compact subset of $T$ containing $0$. Then, $\overline{\SA(M,\R)}=\Lip(M,\R)$ if, and only if, $\lambda_T(M)=0$.
\end{corollary}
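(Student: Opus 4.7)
The plan is to prove both implications by assembling Theorem~\ref{nodensiR-trees} with two external results about Lipschitz-free spaces over compact subsets of $\mathbb{R}$-trees of length measure zero; essentially no additional argument is needed.

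For the necessity direction I would argue by contraposition. Suppose $\lambda_T(M)>0$. Since $M$ is a closed pointed subset of the $\mathbb{R}$-tree $T$ containing $0$, the hypothesis of Theorem~\ref{nodensiR-trees} is satisfied, and that theorem immediately yields that $\SA(M,\mathbb{R})$ is not dense in $\Lip(M,\mathbb{R})$. This direction therefore needs no separate argument beyond the citation.

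For the sufficiency direction, assume $\lambda_T(M)=0$. The point here is to upgrade this metric-measure hypothesis to a linear structural property of the free space. I would invoke \cite[Proposition~8]{dkp}, which states that, for a compact subset $M$ of an $\mathbb{R}$-tree with zero length measure, $\mathcal{F}(M)$ is linearly isometric to a subspace of $\ell_1$. Since $\ell_1$ has the Radon–Nikod\'ym property and the RNP is inherited by closed subspaces, $\mathcal{F}(M)$ has the RNP. At this stage \cite[Proposition~7.4]{lppr} (recorded below as Theorem~\ref{teornpdensidad}) applies and gives that $\SA(M,Y)$ is dense in $\Lip(M,Y)$ for every Banach space~$Y$; specialising to $Y=\mathbb{R}$ gives the conclusion.

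Since the entire argument reduces to chaining together Theorem~\ref{nodensiR-trees}, the $\ell_1$-embedding result of \cite{dkp}, and the RNP density theorem of \cite{lppr}, there is no serious obstacle; the only thing to be careful about is to verify that the standing hypotheses of Theorem~\ref{nodensiR-trees} (closedness, containing $0$) and of \cite[Proposition~8]{dkp} (compactness, zero length measure) are exactly what the statement of the corollary provides, which they are.
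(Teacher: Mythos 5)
Your proposal is correct and follows exactly the argument the paper gives just before the corollary: the necessity direction is Theorem~\ref{nodensiR-trees} by contraposition, and the sufficiency direction chains \cite[Proposition~8]{dkp} (so $\mathcal{F}(M)$ embeds isometrically into $\ell_1$ and hence has the RNP) with Theorem~\ref{teornpdensidad}. Nothing is missing.
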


\section{A discussion in Lipschitz-free spaces on sufficient conditions for Lindenstrauss property A}\label{sec:property_A}

The starting point for this section is \cite[Proposition 7.4]{lppr}, which we present here with a short sketch of a proof slightly different to the one given in \cite{lppr}. In order to do that, we need a bit of notation. Let $X$ and $Y$ be Banach spaces. We say that an operator $T\in \mathcal{L}(X,Y)$ is \emph{absolutely strongly exposing} if there exists $x \in S_{X}$ such that for every sequence $\{x_n\} \subset B_X$ such that $\lim_{n} \|Tx_n\| = \|T\|$, there is a subsequence $\{x_{n_k}\}$ which converges to either $x$ or $-x$. Clearly, if $T$ is an absolutely strongly exposing operator, then $T$ attains its norm at the point $x$ appearing at the definition; it is easy to show that such point $x\in S_X$ is a strongly exposed point (indeed, let $y^*\in S_{Y^*}$ such that $y^*(Tx)=\|T\|$ and consider $x^*\in S_{X^*}$ such that $\|T\|x^*=T^*(y^*)$; if $\{x_n\}$ is a sequence in $B_X$ such that $x^*(x_n)\longrightarrow 1=x^*(x)$, then $$\|T(x_n)\|\geq y^*(Tx_n)=\|T\|x^*(x_n)\longrightarrow \|T\|,$$ so there is a subsequence $\{x_{n_k}\}$ converging to $x$ (it cannot converge to $-x$), showing that $x$ is strongly exposed by $x^*$). A famous result of J.~Bourgain \cite[Theorem~5]{bourgain1977} says that if $X$ is a Banach space with the RNP and $Y$ is any Banach space, the set of absolutely strongly exposing operators from $X$ to $Y$ is a $G_\delta$-dense subset of $\mathcal{L}(X,Y)$ (in particular, the space $X$ has Lindenstrauss property A). Now, let $M$ be a pointed metric space such that $\mathcal{F}(M)$ has the RNP and let $Y$ be a Banach space. As $\strexp{B_{\mathcal F(M)}}\subset \Mol{M}$ (see Proposition \ref{prop:extremalidad}), the discussion above shows that the set of those elements in $\mathcal{L}(\mathcal{F}(M),Y)$ which attain their norm at points of $\Mol{M}$ is dense, in other words, $\SA(M,Y)$ is dense in $\Lip(M,Y)$.

\begin{theorem}[\mbox{\cite[Proposition 7.4]{lppr}}]\label{teornpdensidad}
Let $M$ be a complete pointed metric space such that $\mathcal F(M)$ has the RNP. Then $\SA(M,Y)$ is dense in $\Lip(M,Y)$ for every Banach space $Y$.
\end{theorem}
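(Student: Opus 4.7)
The plan is to combine Bourgain's theorem on absolutely strongly exposing operators with the description of strongly exposed points of $B_{\mathcal{F}(M)}$ given in Proposition~\ref{prop:extremalidad}. First I would invoke the canonical isometric identification $\Lip(M,Y)\equiv \mathcal{L}(\mathcal{F}(M),Y)$ that sends $f$ to $\widehat{f}$; under this identification, a Lipschitz map lies in $\SA(M,Y)$ if and only if the associated operator attains its operator norm at some molecule $m_{p,q}\in \Mol{M}$. So it suffices to exhibit a dense subset of $\mathcal{L}(\mathcal{F}(M),Y)$ consisting of operators that attain their norm at a molecule.

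Since $\mathcal{F}(M)$ has the RNP, I would apply Bourgain's theorem \cite[Theorem~5]{bourgain1977} to obtain that the set $\mathcal{E}$ of absolutely strongly exposing operators from $\mathcal{F}(M)$ into $Y$ is (a $G_\delta$-)dense in $\mathcal{L}(\mathcal{F}(M),Y)$. The key step is to check that every $T\in\mathcal{E}$ attains its norm at a strongly exposed point of $B_{\mathcal{F}(M)}$. By definition, $T$ attains its norm at some $x\in S_{\mathcal{F}(M)}$, and I would verify strong exposure by picking $y^{*}\in S_{Y^{*}}$ with $y^{*}(Tx)=\|T\|$ and $x^{*}\in S_{\mathcal{F}(M)^{*}}$ with $T^{*}y^{*}=\|T\|x^{*}$; given any sequence $\{x_{n}\}\subset B_{\mathcal{F}(M)}$ with $x^{*}(x_{n})\to 1$, the inequality
\[
\|Tx_{n}\|\geq y^{*}(Tx_{n})=\|T\|\, x^{*}(x_{n})\longrightarrow \|T\|
\]
combined with the absolutely strongly exposing property of $T$ yields a subsequence converging to $x$ or $-x$; since $x^{*}(-x)=-1\neq 1$, the subsequence must converge to $x$, so $x^{*}$ strongly exposes $x$.

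At this point I would invoke Proposition~\ref{prop:extremalidad}(a): every strongly exposed point of $B_{\mathcal{F}(M)}$ is a (denting and hence preserved extreme) point, so it belongs to $\Mol{M}$. Therefore $x=m_{p,q}$ for some $p\neq q$ in $M$, which exactly says that the Lipschitz map $f$ corresponding to $T$ strongly attains its norm. This shows $\mathcal{E}\subseteq \SA(M,Y)$ under the identification, and since $\mathcal{E}$ is dense in $\mathcal{L}(\mathcal{F}(M),Y)$, the set $\SA(M,Y)$ is dense in $\Lip(M,Y)$.

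I do not anticipate a genuine obstacle, since all of the ingredients are assembled in the excerpt: the identification with operators, the inclusion $\strexp{B_{\mathcal{F}(M)}}\subseteq \Mol{M}$, and Bourgain's density result for absolutely strongly exposing operators under the RNP. The only mildly delicate point is the verification that the norm-attaining point of an absolutely strongly exposing operator is automatically strongly exposed, but this is a short Hahn--Banach argument as displayed above.
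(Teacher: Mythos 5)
Your argument is correct and is essentially identical to the one the paper gives: both invoke Bourgain's density theorem for absolutely strongly exposing operators under the RNP, verify via the same Hahn--Banach computation that such operators attain their norm at strongly exposed points, and then use $\strexp{B_{\mathcal F(M)}}\subseteq \Mol{M}$ from Proposition~\ref{prop:extremalidad} together with the identification $\Lip(M,Y)\equiv\mathcal L(\mathcal F(M),Y)$. No gaps.
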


Roughly speaking, the proof of the previous theorem shows how a property of Banach spaces (the RNP) which implies property A may actually imply the density of $\SA(M,Y)$ in $\Lip(M,Y)$ by making a strong use of the special behaviour of the extremal structure of Lipschitz-free spaces. This fact motivates an analysis of the connections between certain linear properties on $\mathcal F(M)$ which imply property A and the fact that $\SA(M,Y)$ is dense in $\Lip(M,Y)$ for every Banach space $Y$.

The properties implying property A that we will discuss in the setting of Lipschitz-free spaces will be the following ones, whose definitions can be found in the respective subsections:
\begin{itemize}
\item the existence of a set of uniformly strongly exposed points whose closed convex hull equals the unit ball, introduced by Lindenstrauss himself \cite{lindens} in 1963;
\item property $\alpha$, introduced by W.~Schachermayer \cite{Schachermayer} in 1983, which implies the previous one and which satisfies that ``many'' Banach spaces (separable, reflexive, WCG\ldots) can be renormed having it;
\item property quasi-$\alpha$, which is weaker than property alpha but still implies property A, introduced by Y.~S.~Choi and H.~G.~Song \cite{ChoiSong} in 2008.
\end{itemize}
\begin{figure}[h]
\centering
  \begin{tikzcd}[column sep=1.8cm]
  \lfbox{Property $\alpha$}
  \arrow[d, Rightarrow] & \lfbox{Property quasi-$\alpha$} \arrow[l, Rightarrow]\arrow[d, Rightarrow]& \\
  \lfbox{$\begin{array}{c} B_X=\cconv(S) \\  S \text{ unif. str. exp.}\end{array}$}\arrow[r, Rightarrow] & \lfbox{Property A} & \lfbox{RNP} \arrow[l, Rightarrow]
     \end{tikzcd}
\caption{Relations between properties implying property A in general Banach spaces}
\label{figure:general}
\end{figure}
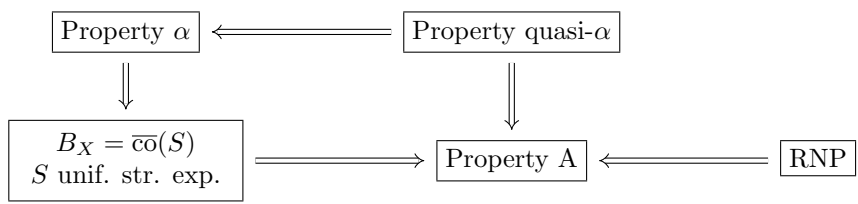
In general, given a Banach space $X$, we have the implications given in Figure \ref{figure:general}. None of the implications reverses and the RNP is not related to the others three properties implying property A. We will discuss the relations between the properties in the setting of Lipschitz-free spaces in subsection \ref{subsec:relations}.

\subsection{Uniformly strongly exposed points}
We start with the definition of the property.

\begin{definition}
Let $X$ be a Banach space. A subset $S\subset S_X$ is said to be a \emph{set of uniformly strongly exposed points} if there is a family of functionals $\{h_x\}_{x\in S}$ with $\|h_x\|=h_x(x)=1$ for every $x\in S$ such that, given $\varepsilon>0$ there is $\delta>0$ satisfying that
\[
\sup_{x\in S} \diam(S(B_X,h_x, \delta)) \leq \varepsilon;
\]
equivalently, if for every $\varepsilon>0$ there is $\delta'>0$ such that whenever $z\in B_X$ satisfies $h_x(z)>1-\delta'$ for some $x\in S$, then $\|x-z\|<\varepsilon$ (that is, all elements of $S$ are strongly exposed points with the same relation $\varepsilon$--$\delta$).
\end{definition}

This concept appeared in the seminal paper by Lindenstrauss \cite{lindens} (see also \cite{Finet} for further applications of it) to give a sufficient condition for a Banach space $X$ to enjoy property A. Namely, if $X$ is a Banach space containing a set of uniformly strongly exposed points $S\subset S_X$ such that $B_X=\overline{\co}(S)$, then $X$ has property A \cite[Proposition 1]{lindens}. Moreover, having a look at the proof of the result, something more can be said. In fact, it is actually proved that, given a Banach space $Y$, then the set
$$
\left\{T\in \mathcal{L}(X,Y)\colon T\text{ attains its norm at a point of }\overline{S}\right\}
$$
is dense in $\mathcal{L}(X,Y)$. Now, given a metric space $M$, if $\mathcal F(M)$ has a subset $S\subseteq S_X$ of uniformly strongly exposed points, then $S\subseteq \Mol{M}$ by Proposition \ref{prop:extremalidad}, since $S$ is made of strongly exposed points. Now, as $\Mol{M}$ is norm-closed (use Proposition \ref{prop:extremalidad} again), the following result follows.

\begin{proposition}\label{prop:unifstrexpsnadens}
Let $M$ be a complete pointed metric space and assume that $B_{\mathcal F(M)}$ is the closed convex hull of a set of uniformly strongly exposed points. Then $\SA(M,Y)$ is norm dense in $\Lip(M,Y)$ for every Banach space $Y$.
\end{proposition}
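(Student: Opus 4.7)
The plan is to assemble the proof from three ingredients already developed in the excerpt: Lindenstrauss's classical theorem on uniformly strongly exposed sets, the fact that strongly exposed points of $B_{\mathcal F(M)}$ are molecules, and the closedness of $\Mol{M}$ in $\mathcal F(M)$.

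First, let $S\subseteq S_{\mathcal F(M)}$ be a set of uniformly strongly exposed points with $B_{\mathcal F(M)}=\overline{\co}(S)$. Every element of $S$ is in particular a strongly exposed point, hence a preserved extreme point of $B_{\mathcal F(M)}$. By Proposition \ref{prop:extremalidad}(a), this forces $S\subseteq \Mol{M}$. Next, by Proposition \ref{prop:extremalidad}(c), the set $\Mol{M}$ is norm-closed in $\mathcal F(M)$, and therefore $\overline S\subseteq \Mol{M}$ as well.

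Second, I invoke the refined form of \cite[Proposition~1]{lindens}: the proof of that result actually shows that whenever $B_X$ is the closed convex hull of a set $S$ of uniformly strongly exposed points, then for every Banach space $Y$ the set
\[
\{T\in \mathcal L(X,Y)\colon T \text{ attains its norm at some point of }\overline{S}\}
\]
is norm-dense in $\mathcal L(X,Y)$. Apply this with $X=\mathcal F(M)$.

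Finally, under the canonical isometric identification of $\mathcal L(\mathcal F(M),Y)$ with $\Lip(M,Y)$, an operator $\widehat f$ that attains its norm at a molecule $m_{x,y}\in \Mol{M}$ corresponds exactly to a Lipschitz map $f$ which strongly attains its norm at the pair $(x,y)$. Since $\overline S\subseteq \Mol{M}$, every operator in the dense set produced in the previous step transfers to an element of $\SA(M,Y)$, proving the claimed density. There is no real obstacle here: the work has been done in establishing Proposition \ref{prop:extremalidad} and in recalling Lindenstrauss's argument, so the proposition follows by assembling those pieces.
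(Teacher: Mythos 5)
Your proof is correct and follows essentially the same route as the paper: invoke the stronger conclusion hidden in Lindenstrauss's proof of \cite[Proposition 1]{lindens} (density of operators attaining their norm on $\overline{S}$), note that strongly exposed points of $B_{\mathcal F(M)}$ are molecules by Proposition \ref{prop:extremalidad}, and use the norm-closedness of $\Mol{M}$ to conclude $\overline{S}\subseteq\Mol{M}$, whence the dense set lands in $\SA(M,Y)$.
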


In view of the previous proposition, we shall begin with a characterisation, inspired by \cite[Theorem 5.4]{gpr}, of the existence of such a set of uniformly strongly exposed points, which only depends on the metric space $M$. We previously need a technical lemma.

\begin{lemma}\label{lemma:failZuniformly}
Let $M$ be a complete pointed metric space, let $A = \{m_{x,y}\}_{(x,y)\in \Lambda}$ be a family of molecules in $\mathcal F(M)$. Assume that there is $\varepsilon_0>0$ such that
	\[ (x,y)_z > \varepsilon_0 \min\{d(x,z), d(y,z)\} \]
	whenever $m_{x,y}\in A$ and $z\in M\setminus\{x,y\}$. Then, there exists a family $B = \{h_{x,y}\}_{(x,y)\in \Lambda}$ in $S_{\Lip(M,\R)}$ such that
	\begin{itemize}
		\item[(a)] $\langle h_{x,y}, m_{x,y}\rangle  = 1$ for every $(x,y)\in \Lambda$, and
		\item[(b)] for every $\varepsilon>0$ there is $\delta = \delta(\varepsilon, \varepsilon_0)>0$ such that
		\begin{equation}\label{eq:lemmab}
		 \langle h_{x,y}, m_{u,v}\rangle > 1-\delta \ \ \text{ implies } \ \ \norm{m_{x,y} - m_{u,v}}<\varepsilon
		\end{equation}
		for every $(x,y)\in \Lambda$ and every $u,v \in M$, $u\neq v$.
	\end{itemize}
\end{lemma}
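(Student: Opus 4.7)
The goal is to produce, for each $(x,y)\in\Lambda$, a 1-Lipschitz functional $h_{x,y}$ on $M$ that strongly exposes the molecule $m_{x,y}$, with a modulus of exposition depending only on $\varepsilon_0$ (and not on the particular pair). By Proposition~\ref{prop:extremalidad}(b), every $m_{x,y}$ in $A$ is already a strongly exposed point of $B_{\mathcal F(M)}$, so the task is essentially to verify that the standard construction of an exposing functional, along the lines of \cite[Theorem~5.4]{gpr}, can be carried out uniformly across the family.

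For (a), I would define $h_{x,y}$ as a Lipschitz function shaped to peak strongly near $x$ and $y$ with opposite signs and to ``avoid the middle''---for instance, a truncated signed-distance function or a calibrated combination of peak functions centred at $x$ and $y$ with heights proportional to $d(x,y)$. After normalising to vanish at $0$, a direct computation yields $\|h_{x,y}\|_L=1$ and $\langle h_{x,y},m_{x,y}\rangle = 1$, proving (a).

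For (b), suppose $\langle h_{x,y},m_{u,v}\rangle > 1-\delta$ for some $u\neq v$ in $M$. Unfolding the inequality into distance estimates, one obtains that both Gromov products $(u,y)_v$ and $(v,x)_u$ are bounded by $O(\delta\, d(u,v))$; geometrically, the triangles $(u,v,y)$ and $(u,v,x)$ are nearly degenerate. The uniform failure of property $(Z)$ at $u$ and $v$---namely $(x,y)_u>\varepsilon_0\min\{d(u,x),d(u,y)\}$ and its analogue at $v$---then, via a short case analysis driven by which argument of the $\min$ is active, yields the quantitative estimate
\[
d(u,x)+d(v,y)\;\leq\;C(\varepsilon_0)\,\delta\cdot\max\{d(x,y),\,d(u,v)\}
\]
with a constant $C(\varepsilon_0)$ depending only on $\varepsilon_0$. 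Lemma~\ref{lemma:ineqmolec} then yields $\|m_{x,y}-m_{u,v}\|\leq 2C(\varepsilon_0)\delta$, so the choice $\delta(\varepsilon,\varepsilon_0):=\varepsilon/(2C(\varepsilon_0))$ proves \eqref{eq:lemmab} uniformly in $(x,y)\in\Lambda$.

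The main obstacle is engineering $h_{x,y}$ so that the bound $d(u,x)+d(v,y)=O(\delta)\cdot\max\{d(x,y),d(u,v)\}$ holds with a constant depending only on $\varepsilon_0$. The ``natural'' candidate $h_{x,y}(z)=\tfrac12(d(z,y)-d(z,x))$ is insufficient in general: simple four-point configurations with $\varepsilon_0=1$ can exhibit $\langle h_{x,y},m_{u,v}\rangle=1$ while $\|m_{x,y}-m_{u,v}\|$ stays bounded away from zero, so the functional must incorporate the $\varepsilon_0$-rate of non-degeneracy through truncation or a peak-combination. In addition, one must handle the boundary situations $u\in\{x,y\}$ or $v\in\{x,y\}$, where the hypothesis is vacuous at the coinciding endpoint but the triangle inequality applied at the other variable, together with the uniform $(Z)$-failure, still yields the required estimate.
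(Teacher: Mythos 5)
Your strategy is the right one and coincides with the paper's: build, for each pair, an exposing functional whose modulus of exposition depends only on $\varepsilon_0$, then convert the resulting distance estimate into a norm estimate via Lemma~\ref{lemma:ineqmolec}. You also correctly diagnose the key difficulty, namely that the naive functional $z\mapsto\tfrac12\bigl(d(z,y)-d(z,x)\bigr)$ does not work and that some truncation calibrated to $\varepsilon_0$ is required.

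However, the proposal stops exactly at that difficulty rather than resolving it, and this is a genuine gap: the entire content of the lemma is the existence of a concrete $h_{x,y}$ for which the implication in (b) holds with $\delta$ depending only on $\varepsilon$ and $\varepsilon_0$. Writing ``for instance, a truncated signed-distance function or a calibrated combination of peak functions'' is a description of desiderata, not a construction, and the subsequent claims --- that $\|h_{x,y}\|_L=1$, that $\langle h_{x,y},m_{u,v}\rangle>1-\delta$ forces $(u,y)_v,(v,x)_u=O(\delta\,d(u,v))$, and above all that the case analysis yields $d(u,x)+d(v,y)\leq C(\varepsilon_0)\,\delta\,\max\{d(x,y),d(u,v)\}$ --- cannot be checked without knowing what $h_{x,y}$ is. For the record, the paper takes $h_{x,y}=\tfrac12(g_{x,y}+f_{x,y})$, where $g_{x,y}$ is the two-branch truncated peak function of Ivakhno--Kadets--Werner \cite[Proposition~2.8]{ikw} with truncation slope $1-\varepsilon_1$ and $\tfrac{\varepsilon_1}{1-\varepsilon_1}<\tfrac{\varepsilon_0}{4}$; this $g_{x,y}$ provides the coarse localisation $\max\{d(x,u),d(y,v)\}<\tfrac{d(x,y)}{4}$ whenever $\langle g_{x,y},m_{u,v}\rangle>1-\varepsilon_1$, and the normalised distance-difference $f_{x,y}(t)=\tfrac{d(x,y)}{2}\tfrac{d(t,y)-d(t,x)}{d(t,y)+d(t,x)}$ then supplies the fine quantitative estimate through the uniform Gromov condition, following the computation of \cite[Theorem~5.4]{gpr}. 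Since averaging forces both summands to be nearly maximal on $m_{u,v}$, one gets both the localisation and the rate simultaneously. Your outline is consistent with this, but without exhibiting the functional and running the verification, the proof is not complete.
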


\begin{proof}
Fix $\varepsilon_1>0$ with $\frac{\varepsilon_1}{1-\varepsilon_1}<\frac{\varepsilon_0}{4}$. For $x,y\in M$ such that $m_{x,y}$ belongs to $A$, consider the Lipschitz function $g_{x,y}$ defined in~\cite[Proposition 2.8]{ikw}, namely
\[ g_{x,y}(z):=\begin{cases}
\max\left\{\frac{d(x,y)}{2}-(1-\varepsilon_1)d(z,x), 0\right\} & \text{if } d(z,y)\geq d(z,x),\\
& d(z,y)+(1-2\varepsilon_1)d(z,x)\geq d(x,y), \\
-\max\left\{\frac{d(x,y)}{2}-(1-\varepsilon_1)d(z,y), 0\right\} & \text{if } d(z,x)\geq d(z,y),\\
& d(z,x)+(1-2\varepsilon_1)d(z,y)\geq d(x,y).
\end{cases}
\]
It is well defined and satisfies that $\norm{g_{x,y}}_{L}=1$, $\langle g_{x,y}, m_{x,y}\rangle  = 1$, and
\begin{equation}\label{eq:gxy}
\langle g_{x,y}, m_{u,v}\rangle  >1-\varepsilon_1\ \  \text{ implies } \ \ \max\{d(x,u),d(y,v)\}<\frac{d(x,y)}{4}
\end{equation}
for any $u,v\in M$, $u\neq v$ (see the proof of Proposition 2.8 in~\cite{ikw}). Consider also the function defined by
\[f_{x,y}(t):= \frac{d(x,y)}{2}\frac{d(t,y)-d(t,x)}{d(t,y)+d(t,x)}\] for every $t\in M$,
and take $h_{x,y}=\frac{1}{2}(g_{x,y}+f_{x,y})$. Now, one can check that the family $B=\{h_{x,y}\}_{(x,y)\in \Lambda}$ does the work following word-by-word the proof of \cite[Theorem 5.4]{gpr}.
\end{proof}

The previous lemma motivates us to consider the following property, related to the  characterization of strongly exposed points of the unit ball of the Lipschitz-free spaces given in Proposition \ref{prop:extremalidad}.b.

\begin{definition} Let $M$ be a metric space and let $A\subset \Mol{M}$. We say that \emph{$A$ is uniformly Gromov rotund} if there is $\varepsilon>0$ such that
	\[ (x,y)_z > \varepsilon \min\{d(x,z), d(y,z)\} \]
	whenever $m_{x,y}\in A$ and $z\in M\setminus\{x,y\}$.
\end{definition}

Observe that, thanks to Proposition \ref{prop:extremalidad}.b, $A$ is uniformly Gromov rotund if and only if it fails property (Z) in a uniform way.

We can now give a metric characterisation of when a set of molecules in $\mathcal F(M)$ is uniformly strongly exposed in the following sense.

\begin{proposition}\label{prop:charunifstrexp} Let $M$ be a complete pointed metric space and let $A$ be a set of molecules in $\mathcal F(M)$. Then, the following statements are equivalent:
\begin{enumerate}[(i)]
\item $A$ is uniformly strongly exposed in $B_{\mathcal F(M)}$,
\item $A$ is uniformly Gromov rotund.
\end{enumerate}
\end{proposition}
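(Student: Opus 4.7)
\emph{$(ii)\Rightarrow(i)$.} Under the uniform Gromov rotundity assumption, Lemma~\ref{lemma:failZuniformly} already yields a family $\{h_{x,y}\}_{(x,y)\in\Lambda}\subset S_{\Lip(M,\R)}$ that norms each $m_{x,y}\in A$ and makes the sets $\{m_{u,v}\in\Mol{M}\colon h_{x,y}(m_{u,v})>1-\delta\}$ uniformly close to $m_{x,y}$ as $\delta\to 0$. To upgrade this molecule-level control to uniform strong exposure on the whole of $B_{\mathcal F(M)}$, I will use a standard convexity argument based on the identity $B_{\mathcal F(M)}=\overline{\co}(\Mol{M})$: given $z$ in a slice $S(B_{\mathcal F(M)},h_{x,y},\delta)$, I approximate $z$ by a convex combination $\sum_k\lambda_k m_{u_k,v_k}$ of molecules, split the indices according to whether $h_{x,y}(m_{u_k,v_k})$ exceeds the threshold $1-\delta_0$ coming from Lemma~\ref{lemma:failZuniformly} for the prescribed $\varepsilon$, note that the ``bad'' indices carry total mass $O(\delta/\delta_0)$, and sum the estimates. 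Choosing $\delta$ small enough in terms of $\delta_0$ and $\varepsilon$ gives $\|z-m_{x,y}\|<\varepsilon$ uniformly in $(x,y)\in\Lambda$.

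\emph{$(i)\Rightarrow(ii)$.} I argue by contrapositive. If $A$ is not uniformly Gromov rotund, I pick $m_{x_n,y_n}\in A$ and $z_n\in M\setminus\{x_n,y_n\}$ with $(x_n,y_n)_{z_n}/\min\{d(x_n,z_n),d(z_n,y_n)\}\to 0$, set $a_n=d(x_n,z_n)$, $b_n=d(z_n,y_n)$, $c_n=d(x_n,y_n)$, and (by swapping $x_n\leftrightarrow y_n$ if needed, which is legitimate by the symmetry of the Gromov product) assume $a_n\leq b_n$. Assuming towards a contradiction that $A$ is uniformly strongly exposed by a family $\{h_{x,y}\}$, my witness will be the auxiliary molecule $m_{x_n,z_n}$. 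The identity $h_{x_n,y_n}(x_n)-h_{x_n,y_n}(y_n)=c_n$ together with the Lipschitz bound $|h_{x_n,y_n}(z_n)-h_{x_n,y_n}(y_n)|\leq b_n$ forces
\[
h_{x_n,y_n}(m_{x_n,z_n}) \;\geq\; \frac{c_n-b_n}{a_n} \;=\; 1 - \frac{2\,(x_n,y_n)_{z_n}}{a_n} \;\longrightarrow\; 1.
\]
On the other hand, the second inequality of Lemma~\ref{lemma:ineqmolec} applied to $(x_n,y_n)$ and $(x_n,z_n)$ reads $b_n/\min\{c_n,a_n\}\leq \|m_{x_n,y_n}-m_{x_n,z_n}\|$ whenever the right-hand side is less than $1$; but $\min\{c_n,a_n\}\leq a_n\leq b_n$, so this forces $\|m_{x_n,y_n}-m_{x_n,z_n}\|\geq 1$ unconditionally. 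Together these contradict uniform strong exposure.

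\emph{Main obstacle.} The delicate step is identifying the correct witness in $(i)\Rightarrow(ii)$: the natural convex identity $\tfrac{a_n}{a_n+b_n}\,m_{x_n,z_n}+\tfrac{b_n}{a_n+b_n}\,m_{z_n,y_n}=\tfrac{c_n}{a_n+b_n}\,m_{x_n,y_n}$ only produces a point which contracts onto $m_{x_n,y_n}$ as $c_n/(a_n+b_n)\to 1$ and hence detects no pathology; moreover, when $a_n/(a_n+b_n)\to 0$ no information on $h_{x_n,y_n}(m_{x_n,z_n})$ can be extracted from that identity alone. The right move is to test the \emph{single} molecule $m_{x_n,z_n}$ built from the endpoint closer to $z_n$: a direct Lipschitz estimate on $h_{x_n,y_n}$ forces near-attainment, while Lemma~\ref{lemma:ineqmolec} supplies the uniform lower bound of $1$ on the displacement. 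The convexity bookkeeping needed in $(ii)\Rightarrow(i)$ is routine.
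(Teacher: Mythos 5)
Your proof is correct and follows essentially the same route as the paper: the implication (ii)$\Rightarrow$(i) is Lemma~\ref{lemma:failZuniformly} upgraded from molecules to the whole ball by the standard convex-combination/slice-diameter estimate (which the paper delegates to Lemma~2.7 of \cite{lppr}), and (i)$\Rightarrow$(ii) uses exactly the paper's witness $m_{x,z}$ for the endpoint nearer to $z$, with your displayed inequality being precisely the content of the paper's Lemma~\ref{lemma:strexp} and the displacement bound coming from Lemma~\ref{lemma:ineqmolec} as in the paper (your unconditional lower bound of $1$ is a slightly cleaner form of the paper's $\tfrac12<\tfrac12$ contradiction, and the ``swap $x_n\leftrightarrow y_n$'' is harmless since it only amounts to testing $m_{z_n,y_n}$ instead).
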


In order to prove the proposition, we need the following lemma.

\begin{lemma}\label{lemma:strexp} Let $M$ be a complete pointed metric space. Let $x,y\in M$, $x\neq y$, and let $f\in \Lip(M,\R)$ be such that $\norm{f}_L=1$ and $\langle f, m_{x,y}\rangle  =1$. Then, for every $z\in M\setminus\{x,y\}$ we have that
\[  \langle f, m_{x,z}\rangle  \geq 1-2\frac{(x,y)_z}{d(x,z)} \quad \text{and} \quad \langle f, m_{z,y}\rangle  \geq 1-2\frac{(x,y)_z}{d(y,z)}.\]
\end{lemma}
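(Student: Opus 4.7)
The hypothesis $\|f\|_L=1$ together with $\langle f, m_{x,y}\rangle=1$ just says $f(x)-f(y)=d(x,y)$, so the plan is to exploit this equality by splitting it at the intermediate point $z$ and using the Lipschitz bound on each piece. Writing $a:=f(x)-f(z)$ and $b:=f(z)-f(y)$, we have $a+b=d(x,y)$, $|a|\leq d(x,z)$ and $|b|\leq d(y,z)$, which already forces both $a$ and $b$ to be close to their maximal values when the Gromov product $(x,y)_z$ is small.

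Concretely, from $a=d(x,y)-b\geq d(x,y)-d(y,z)$ and the identity $d(x,z)+d(y,z)-d(x,y)=2(x,y)_z$, one immediately computes
\begin{equation*}
\langle f,m_{x,z}\rangle = \frac{a}{d(x,z)} \geq \frac{d(x,y)-d(y,z)}{d(x,z)} = \frac{d(x,z)-2(x,y)_z}{d(x,z)} = 1-\frac{2(x,y)_z}{d(x,z)}.
\end{equation*}
The symmetric estimate for $\langle f,m_{z,y}\rangle=b/d(y,z)$ follows in the same way from $b\geq d(x,y)-d(x,z)=d(y,z)-2(x,y)_z$.

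There is really no obstacle to overcome here: the lemma is a direct consequence of the triangle inequality for $f$ together with the very definition of the Gromov product, so the proof reduces to the two short algebraic computations above. The value of the lemma for the surrounding discussion is that it quantifies how an exposing functional for $m_{x,y}$ also almost-exposes the molecules $m_{x,z}$ and $m_{z,y}$ whenever $z$ is metrically close to the segment between $x$ and $y$, which is precisely what will be needed to push Lemma~\ref{lemma:failZuniformly} through and obtain the characterisation of uniformly strongly exposed sets of molecules in Proposition~\ref{prop:charunifstrexp}.
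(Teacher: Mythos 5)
Your proof is correct and is essentially the paper's argument in different notation: your quantities $a=f(x)-f(z)$ and $b=f(z)-f(y)$ are exactly $d(x,z)\langle f,m_{x,z}\rangle$ and $d(z,y)\langle f,m_{z,y}\rangle$, and both proofs combine the identity $a+b=d(x,y)$ with the Lipschitz bound on one summand and the definition of the Gromov product. Nothing to correct.
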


\begin{proof}
Note that
\begin{align*}
1 = \langle  f, m_{x,y}\rangle   & = \left\langle f, \frac{d(x,z)}{d(x,y)}m_{x,z} + \frac{d(z,y)}{d(x,y)}m_{z,y}\right\rangle  \\ & = \frac{d(x,z)}{d(x,y)} \langle f, m_{x,z}\rangle  + \frac{d(z,y)}{d(x,y)} \langle f, m_{z,y}\rangle .
\end{align*}
Thus,
\begin{align*} d(x,z)+d(z,y)-2(x,y)_z &= d(x,y) = d(x,z)\langle f, m_{x,z} \rangle  + d(z,y) \langle  f, m_{z,y}\rangle \\  &\leq d(x,z)\langle f, m_{x,z}\rangle  + d(z,y)
\end{align*}
and the conclusion follows.
\end{proof}

\begin{proof}[Proof of Proposition \ref{prop:charunifstrexp}]
(i)$\Rightarrow$(ii). Let $\{h_{x,y}\}_{m_{x,y}\in A}\subset S_{\Lip(M,\R)}$ be a family which uniformly strongly exposes the family $A$. Take $\delta>0$ such that
\[ \sup_{m_{x,y}\in A} \diam(S(B_{\mathcal{F}(M)}, h_{x,y}, \delta)) < \frac{1}{2}.\]
Assume that $A$ is not uniformly Gromov rotund. Then, there are $x,y\in M$, $x\neq y$, such that $m_{x,y}\in A$, and $z\in M\setminus\{x,y\}$ such that
\[ (x,y)_z < \frac{\delta}{2} \min\{d(x, z), d(y, z)\}.\]
By interchanging the roles of $x$ and $y$ if needed, we may assume that $d(x, z)\leq d(y,z)$ and so, $d(y,z)\geq \frac{1}{2}d(x,y)$. Now, Lemma \ref{lemma:strexp} implies that
\[ \langle h_{x,y}, m_{x,z}\rangle  \geq 1-2\frac{(x,y)_{z}}{d(x,z)} > 1-\delta. \]
From this and Lemma \ref{lemma:ineqmolec}, it follows that
\[ \frac{1}{2}\leq\frac{d(y,z)}{d(x,y)} \leq \norm{m_{x,y}-m_{x,z}} < \frac{1}{2}\]
which is a contradiction.

\noindent (ii)$\Rightarrow$(i). By hypothesis, there is $\varepsilon_0>0$ such that
\[ d(x,z)+d(z,y)>d(x,y)+\varepsilon_0 \min\{d(x,z),d(z,y)\} \]
whenever $m_{x,y}\in A$ and $z\in M\setminus\{x,y\}$. Let $B=\{h_{x,y}\}_{(x,y)\in \Lambda}$ be the set provided by Lemma \ref{lemma:failZuniformly}. We claim that $B$ uniformly strongly exposes $A$. Indeed, given $\varepsilon>0$, take $0<\delta<\varepsilon$ such that
 \[ \langle h_{x,y}, m_{u,v}\rangle > 1-\delta\ \ \text{ implies }\ \ \norm{m_{x,y} - m_{u,v}}<\varepsilon \]
 for every $m_{x,y}\in A$ and every $u,v \in M$, $u\neq v$. Thus,
 \[ \diam\bigl(S(B_{\mathcal F(M)}, h_{x,y}, \delta)\cap \Mol{M}\bigr) \leq 2\varepsilon. \]
 Finally, note that
\[ \diam(S(B_{\mathcal F(M)}, h_{x,y},  \delta^2) \leq 2\diam\left(S(B_{\mathcal F(M)}, h_{x,y}, \delta)\cap \Mol{M}\right) + 4\delta \leq 8\varepsilon, \]
see e.g.\ Lemma 2.7 in \cite{lppr}.
\end{proof}

As an immediate consequence of Propositions \ref{prop:unifstrexpsnadens} and \ref{prop:charunifstrexp}, we obtain the following corollary.

\begin{corollary}
Let $M$ be a complete pointed metric space. If there exists a uniformly Gromov rotund subset $A\subset \Mol{M}$ such that $B_{\mathcal{F}(M)}$ is the closed convex hull of $A$, then $\SA(M,Y)$ is dense in $\Lip(M,Y)$ for every Banach space $Y$.
\end{corollary}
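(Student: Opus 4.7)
The plan is to derive the corollary as an immediate consequence of the two propositions that precede it, since by design Proposition \ref{prop:charunifstrexp} is the metric translation of ``uniformly strongly exposed'' and Proposition \ref{prop:unifstrexpsnadens} is the linear sufficient condition for density of strongly norm attaining maps. So there is essentially no new work to do beyond invoking these in the correct order.

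More concretely, I would start from the hypothesis: let $A \subset \Mol{M}$ be a uniformly Gromov rotund subset whose closed convex hull equals $B_{\mathcal F(M)}$. By the direction (ii)$\Rightarrow$(i) of Proposition \ref{prop:charunifstrexp}, the set $A$ is uniformly strongly exposed in $B_{\mathcal F(M)}$; in particular $A \subset S_{\mathcal F(M)}$ (each molecule has norm one) and the hypothesis $\overline{\co}(A) = B_{\mathcal F(M)}$ places us exactly in the setting of Lindenstrauss' sufficient condition for property A. Then Proposition \ref{prop:unifstrexpsnadens} applies verbatim and yields that $\SA(M,Y)$ is norm dense in $\Lip(M,Y)$ for every Banach space $Y$.

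There is essentially no main obstacle here: the content has already been extracted in the preceding propositions. The only thing that deserves a line of comment is that in Proposition \ref{prop:unifstrexpsnadens} one implicitly uses the fact (stated in Proposition \ref{prop:extremalidad}) that any set of strongly exposed points of $B_{\mathcal F(M)}$ consists of molecules and that $\Mol{M}$ is norm-closed, so that the density of operators attaining their norm at points of $\overline{A} \subset \Mol{M}$ translates into the density of Lipschitz maps strongly attaining their norm. Since our hypothesis already gives $A \subset \Mol{M}$, this translation is automatic and the corollary follows.
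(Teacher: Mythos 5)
Your proposal is correct and coincides with the paper's own argument: the corollary is stated there precisely as an immediate consequence of Propositions \ref{prop:unifstrexpsnadens} and \ref{prop:charunifstrexp}, applied in exactly the order you describe. Your additional remark about $\overline{A}\subset\Mol{M}$ via Proposition \ref{prop:extremalidad} is a harmless (and accurate) elaboration of what Proposition \ref{prop:unifstrexpsnadens} already encapsulates.
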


The space $\Lip(M,\R)$ has the Daugavet property if and only if the complete pointed metric space $M$ has property (Z) (see \cite{ikw} for the compact case and the very recent paper \cite{AviMar} for the general case) and if and only if $M$ is a length space \cite[Theorem 3.5]{gpr}. Thus, the previous result shows that the failure of the Daugavet property in a very strong sense implies the density of $\SA(M, Y)$ in $\Lip(M,Y)$ for every Banach space $Y$. Compare with Theorem \ref{teo:length}, where it is shown that if $\Lip(M,\R)$ has the Daugavet property, then $\SA(M,\R)$ is not dense in $\Lip(M,\R)$.

\subsection[Property alpha]{Property \texorpdfstring{$\alpha$}{alpha}}

In the sequel we will consider a particular way in which a Banach space may contain a subset of uniformly strongly exposed points whose closed convex hull is the whole unit ball. It was introduced in \cite{Schachermayer} by W.~Schachermayer with the name of property $\alpha$ and its main interest is that ``many'' Banach spaces (e.g.\ separable, reflexive, WCG\ldots) can be equivalently renormed to have it. The prototype Banach space with property $\alpha$ is $\ell_1$.

\begin{definition}
	A Banach space $X$ is said to have \emph{property $\alpha$} if there exist a balanced subset $\{x_\lambda\}_{\lambda \in \Lambda}$ of $X$ and a subset $\{x^*_\lambda\}_{\lambda \in \Lambda} \subseteq X^*$ such that
	\begin{enumerate}[(i)]
		\item $\lVert x_\lambda \rVert = \lVert x^*_\lambda\rVert = \lvert x^*_\lambda(x_\lambda)\rvert =1$ for all $\lambda\in \Lambda$.
		\item There exists $0\leq \rho <1$ such that
		\[ |x^*_\lambda(x_\mu)|\leq \rho \quad \forall \, x_\lambda \neq \pm x_\mu. \]
		\item $\overline{\co}\left(\{x_\lambda\}_{\lambda \in \Lambda}\right)= B_X$.
	\end{enumerate}	
\end{definition}

Because of methodological reasons, we have modified a little bit the original definition from \cite{Schachermayer} to an equivalent one in which we impose the set $\{x_\lambda\}_{\lambda \in \Lambda}$ to be balanced.

It is shown in \cite[Fact in p.~202]{Schachermayer} that if $X$ has property $\alpha$ witnessed by a set $\Gamma \subset S_X$, then $\Gamma$ is a set of uniformly strongly exposed points. Therefore, if $M$ is a pointed metric space for which $\mathcal{F}(M)$ has property $\alpha$, then Proposition \ref{prop:unifstrexpsnadens} gives that $\SA(M,Y)$ is dense in $\Lip(M,Y)$ for every Banach space $Y$. This can be also proved directly by adapting the proof of \cite[Proposition 1.3.a]{Schachermayer} to our case, as it is shown there that the set of operators from $\mathcal{F}(M)$ to $Y$ attaining their norms on points of $\Gamma$ is dense in $\mathcal{L}(\mathcal{F}(M),Y)$, and we just have to observe that $\Gamma\subset \strexp{B_{\mathcal F(M)}}\subset \Mol{M}$ (see Proposition \ref{prop:extremalidad}).

\begin{corollary}
Let $M$ be a complete pointed metric space for which $\mathcal{F}(M)$ has property $\alpha$. Then, the set $\SA(M,Y)$ is dense in $\Lip(M,Y)$ for every Banach space $Y$.
\end{corollary}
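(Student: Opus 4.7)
The plan is to deduce the corollary as an immediate consequence of Proposition \ref{prop:unifstrexpsnadens}, using Schachermayer's observation that the witnessing set for property $\alpha$ is automatically uniformly strongly exposed. First I would fix a balanced set $\Gamma=\{x_\lambda\}_{\lambda\in\Lambda}\subset S_{\mathcal{F}(M)}$ and a family $\{x^*_\lambda\}_{\lambda\in\Lambda}\subset S_{\Lip(M,\R)}$ witnessing property $\alpha$ with constant $0\leq\rho<1$. By condition (iii), $\overline{\co}(\Gamma)=B_{\mathcal{F}(M)}$, so the only thing to check is that $\Gamma$ is a set of uniformly strongly exposed points, the functionals $x^*_\lambda$ playing the role of the exposing functionals.

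For that step, I would verify (or just quote from \cite[p.~202]{Schachermayer}) the uniformity as follows: given $\varepsilon>0$, pick $\delta>0$ small (depending only on $\rho$ and $\varepsilon$) and suppose $z\in B_{\mathcal{F}(M)}$ satisfies $x^*_\lambda(z)>1-\delta$. Approximating $z$ by convex combinations of elements of $\Gamma$ and using $|x^*_\lambda(\pm x_\mu)|\leq\rho<1$ for $x_\mu\neq \pm x_\lambda$ (together with the balancedness, which absorbs the sign ambiguity), one finds that the total mass of the combination concentrated outside $\{\pm x_\lambda\}$ is forced to be of order $\delta/(1-\rho)$, so $z$ is within $\varepsilon$ of $x_\lambda$. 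This gives the uniform $\varepsilon$--$\delta$ relation required in the definition of a uniformly strongly exposed set.

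Once this is in place, Proposition \ref{prop:unifstrexpsnadens} applies verbatim and yields that $\SA(M,Y)$ is norm dense in $\Lip(M,Y)$ for every Banach space $Y$. I do not anticipate any real obstacle: the heavy lifting has already been done, since Proposition \ref{prop:unifstrexpsnadens} already incorporates the crucial inclusion $\strexp{B_{\mathcal{F}(M)}}\subseteq \Mol{M}$ from Proposition \ref{prop:extremalidad} that turns mere property A into density of strongly norm attaining Lipschitz maps. The only delicate point is to make sure the $\varepsilon$--$\delta$ choice in Schachermayer's argument is genuinely uniform in $\lambda$, which follows at once from the fact that the constant $\rho$ in condition (ii) is common to all $\lambda\in\Lambda$.
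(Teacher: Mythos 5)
Your proposal is correct and follows exactly the paper's own route: quote Schachermayer's observation that a set witnessing property $\alpha$ is automatically uniformly strongly exposed, note that its closed convex hull is the unit ball, and apply Proposition \ref{prop:unifstrexpsnadens}. Your sketched verification of the uniform $\varepsilon$--$\delta$ relation (bounding the mass of an approximating convex combination outside $\{\pm x_\lambda\}$ by $O(\delta/(1-\rho))$, with $\rho$ independent of $\lambda$) is also sound.
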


As we have said, if $\Gamma\subset S_{\mathcal F(M)}$ witnesses that $\mathcal F(M)$ has property $\alpha$, then $\Gamma$ is made up of molecules. We can say something more. J.~P.~Moreno proved in \cite[Proposition~3.6]{Moreno} that if a Banach space $X$ has property $\alpha$ witnessed by $\Gamma\subset S_X$, then $\Gamma = \dent{B_X} = \strexp{B_X}$. Indeed, if $x\in S_X$ is a denting point, then the  slices of $B_{X}$  containing $x$ are a neighbourhood basis of $x$ for the norm topology in $B_X$. Since $\cconv(\Gamma)=B_X$, we have that every slice of $B_X$ intersects $\Gamma$. It follows that $x\in \overline{\Gamma}$. Finally, if $X$ has property $\alpha$, then the set $\Gamma$ is obviously uniformly discrete, hence closed. Thus,
\[ \dent{B_X}\subset \Gamma \subset \strexp{B_X} \subset \dent{B_X} \]
From this and the fact that every preserved extreme point of $B_{\mathcal{F}(M)}$ is a denting point by Proposition~\ref{prop:extremalidad}, we get the following result.

\begin{proposition}\label{prop:alphafreestrexp} Let $M$ be a complete pointed metric space and assume that $\mathcal F(M)$ has property $\alpha$ witnessed by $\Gamma\subset S_{\mathcal F(M)}$. Then, $$\Gamma=\preext{B_{\mathcal F(M)}}=\strexp{B_{\mathcal F(M)}}.$$
\end{proposition}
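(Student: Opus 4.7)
The plan is to observe that the proposition is essentially an immediate combination of two facts, one general (due to Moreno, recalled in the paragraph just before the statement) and one specific to Lipschitz-free spaces (Proposition \ref{prop:extremalidad}(a)). The strategy is to reproduce Moreno's argument in our notation to obtain $\Gamma = \dent{B_{\mathcal F(M)}} = \strexp{B_{\mathcal F(M)}}$ and then invoke Proposition \ref{prop:extremalidad}(a) to replace $\dent{\cdot}$ by $\preext{\cdot}$.

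First I would establish the inclusion $\dent{B_{\mathcal F(M)}} \subseteq \Gamma$. Take $x \in \dent{B_{\mathcal F(M)}}$. By the definition of denting point, the slices of $B_{\mathcal F(M)}$ containing $x$ form a neighbourhood basis of $x$ in the norm topology on $B_{\mathcal F(M)}$. Since $\overline{\co}(\Gamma) = B_{\mathcal F(M)}$, every slice of $B_{\mathcal F(M)}$ must meet $\Gamma$ (otherwise a separation argument would contradict the density). Consequently $x \in \overline{\Gamma}$. But the defining condition of property $\alpha$ forces $\Gamma$ to be uniformly discrete: indeed, if $x_\lambda \neq \pm x_\mu$ then $|x_\lambda^*(x_\lambda - x_\mu)| \geq 1 - \rho$, so $\|x_\lambda - x_\mu\| \geq 1-\rho$. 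In particular $\Gamma$ is closed, and hence $x \in \Gamma$.

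Next I would verify the inclusion $\Gamma \subseteq \strexp{B_{\mathcal F(M)}}$, which is precisely the fact from \cite[Fact p.~202]{Schachermayer} that the elements of $\Gamma$ are (uniformly) strongly exposed by the corresponding functionals $x_\lambda^*$; this can be read off easily from the property $\alpha$ data by fixing $\lambda$ and using the $\rho$-separation to get that any net in $B_{\mathcal F(M)}$ on which $x_\lambda^*$ tends to $1$ must concentrate near $x_\lambda$. Since strongly exposed points are automatically denting, this closes the circle:
\[
\dent{B_{\mathcal F(M)}} \subseteq \Gamma \subseteq \strexp{B_{\mathcal F(M)}} \subseteq \dent{B_{\mathcal F(M)}},
\]
so all three sets coincide.

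Finally, I would apply Proposition \ref{prop:extremalidad}(a), which states that in any Lipschitz-free space $\preext{B_{\mathcal F(M)}} = \dent{B_{\mathcal F(M)}}$, to conclude that $\Gamma = \preext{B_{\mathcal F(M)}} = \strexp{B_{\mathcal F(M)}}$, as desired. There is no real obstacle here; the only thing to be slightly careful with is the assertion that every slice of $B_{\mathcal F(M)}$ meets $\Gamma$, which uses $\overline{\co}(\Gamma)=B_{\mathcal F(M)}$ together with a standard Hahn--Banach separation (a slice disjoint from $\Gamma$ would be disjoint from $\overline{\co}(\Gamma)$ as well, a contradiction).
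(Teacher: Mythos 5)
Your proof is correct and follows essentially the same route as the paper: Moreno's argument that $\dent{B_X}\subseteq\overline{\Gamma}=\Gamma$ (via the neighbourhood-basis property of slices at denting points, the fact that every slice meets $\Gamma$ since $\cconv(\Gamma)=B_X$, and the uniform discreteness of $\Gamma$), combined with Schachermayer's fact that $\Gamma\subseteq\strexp{B_X}$ and Proposition \ref{prop:extremalidad}(a) to identify $\dent{B_{\mathcal F(M)}}$ with $\preext{B_{\mathcal F(M)}}$. No gaps; your explicit justification of the uniform discreteness of $\Gamma$ is a welcome detail the paper leaves as ``obvious''.
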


In the sequel, we will get a reformulation of property $\alpha$ in $\mathcal F(M)$. To this end, we need the following elementary characterisation of uniformly discrete subsets of molecules.

\begin{lemma}\label{lemma:unifdiscrmolec} Let $M$ be a metric space and consider $A\subset \Mol{M}$. Then, $A$ is uniformly discrete if and only if there exists $\delta>0$ such that
\begin{equation}\label{eq:unifdisc}
d(x,u)+d(v,y) \geq \delta\, d(x,y)
\end{equation}
whenever $m_{x,y}$ and $m_{u,v}$ are distinct elements of $A$.
\end{lemma}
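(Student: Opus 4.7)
My plan is to derive both directions of the equivalence as direct quantitative consequences of Lemma~\ref{lemma:ineqmolec}, which already provides two-sided estimates comparing $\norm{m_{x,y}-m_{u,v}}$ with the metric quantities $d(x,u)+d(y,v)$ and $\max/\min\{d(x,y),d(u,v)\}$. The whole statement is essentially a reformulation of that lemma once one understands what happens with large and small norm differences separately.

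For the forward implication, I will assume $A$ is $\delta_0$-uniformly discrete in the norm of $\F{M}$, i.e.\ $\norm{m_{x,y}-m_{u,v}}\geq \delta_0$ for all distinct $m_{x,y},m_{u,v}\in A$. Applying the first inequality of Lemma~\ref{lemma:ineqmolec} gives
\[
\delta_0 \leq \norm{m_{x,y}-m_{u,v}} \leq 2\,\frac{d(x,u)+d(y,v)}{\max\{d(x,y),d(u,v)\}} \leq 2\,\frac{d(x,u)+d(y,v)}{d(x,y)},
\]
so $d(x,u)+d(y,v)\geq \tfrac{\delta_0}{2}d(x,y)$ and the stated inequality holds with $\delta=\delta_0/2$.

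For the converse, suppose there exists $\delta>0$ such that $d(x,u)+d(y,v)\geq \delta\, d(x,y)$ whenever $m_{x,y}\neq m_{u,v}$ are in $A$. I will show that $\norm{m_{x,y}-m_{u,v}}\geq \min\{1,\delta/2\}$. If $\norm{m_{x,y}-m_{u,v}}\geq 1$ there is nothing to prove, and in the remaining case the second inequality of Lemma~\ref{lemma:ineqmolec} applies and yields
\[
\norm{m_{x,y}-m_{u,v}} \geq \frac{\max\{d(x,u),d(y,v)\}}{\min\{d(x,y),d(u,v)\}} \geq \frac{\tfrac12\bigl(d(x,u)+d(y,v)\bigr)}{d(x,y)}\geq \frac{\delta}{2},
\]
using that a maximum dominates the average of two nonnegative quantities and that $\min\{d(x,y),d(u,v)\}\leq d(x,y)$.

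There is no real obstacle here; the argument is a clean reduction to Lemma~\ref{lemma:ineqmolec}. The only subtlety to watch is the apparent asymmetry of the hypothesis $d(x,u)+d(y,v)\geq \delta\, d(x,y)$ with respect to swapping the two molecules; however, relabelling automatically gives the symmetric version $d(x,u)+d(y,v)\geq \delta\max\{d(x,y),d(u,v)\}$, which is in any case not needed because the bound $\min\{d(x,y),d(u,v)\}\leq d(x,y)$ suffices in the key step above.
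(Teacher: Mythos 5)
Your proof is correct and takes essentially the same route as the paper's: both directions are read off from Lemma~\ref{lemma:ineqmolec}, the forward one from the first inequality (with the harmless extra step of bounding $\max\{d(x,y),d(u,v)\}\geq d(x,y)$), and the converse by splitting into the cases $\norm{m_{x,y}-m_{u,v}}\geq 1$ and $<1$ and using the second inequality together with the bound of a maximum by the average. Your write-up is in fact slightly cleaner than the paper's, which contains a couple of typographical slips in the same chain of inequalities.
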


\begin{proof}
If $A$ is uniformly discrete, then there is $\delta>0$ such that
\[ 2\delta\leq \norm{m_{x,y}-m_{u,v}}\leq 2\frac{d(x,u)+d(y,v)}{d(x,y)},\]
where the last inequality follows from Lemma \ref{lemma:ineqmolec}. Conversely, assume that the inequality \eqref{eq:unifdisc} holds for every $m_{x,y}, m_{u,v}\in A$ with $m_{x,y}\neq m_{u,v}$. If one has that $\norm{m_{x,y}-m_{u,v}}<1$ then, again by Lemma \ref{lemma:ineqmolec}, we get that
\[\norm{m_{x,y}-m_{u,v}}\geq \frac{\max\{d(x,u),d(u,v)\}}{d(x,y)}\geq \frac{1}{2}\frac{d(x,u)+d(u,y)}{d(x,y)} \geq \frac{\delta}{2}.\]
Thus, $\norm{m_{x,y}-m_{u,v}}\geq \min\{1,\delta/2\}$ for $m_{x,y}\neq m_{u,v}$ in $A$.
\end{proof}

The following proposition characterises the Lipschitz-free spaces with property $\alpha$ in terms of the existence of a norming subset of molecules satisfying certain metrical conditions.

\begin{proposition}\label{prop:charalpha} Let $M$ be a complete pointed metric space. The following are equivalent:
	\begin{enumerate}[(i)]
		\item $\mathcal F(M)$ has property $\alpha$.
		\item There exists $\Lambda\subset \{(p,q)\in M\times M\colon p\neq q\}$ such that, writing $A = \{m_{x,y}\colon (x,y)\in \Lambda\}\subset \Mol{M}$, one has that:
        \begin{itemize}
        \item there exists $\delta>0$ such that $d(x,u)+d(y,v)\geq \delta d(x,y)$ for all $(x,y),(u,v)\in \Lambda$ with $(x,y)\neq (u,v)$ (equivalently, $A$ is uniformly discrete);
        \item there is $\varepsilon>0$ such that
	\[ (x,y)_z > \varepsilon \min\{d(x,z), d(y,z)\} \]
	whenever $(x,y)\in \Lambda$ and $z\in M\setminus\{x,y\}$ (equivalently, $A$ is uniformly Gromov rotund);
        \item $\|f\|_L=\sup\left\{\frac{f(x)-f(y)}{d(x,y)}\colon (x,y)\in \Lambda\right\}$  for every $f\in \Lip(M,\R)$ (equivalently, $B_{\mathcal F(M)} = \cconv(A)$).
 	\end{itemize}
	\end{enumerate}
Moreover, in such a case, the set $A$ coincides with the whole set of strongly exposed points of $B_{\mathcal F(M)}$.
\end{proposition}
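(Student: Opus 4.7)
The parenthetical equivalences in (ii) follow from Lemma \ref{lemma:unifdiscrmolec}, Proposition \ref{prop:charunifstrexp}, and a routine Hahn--Banach observation ($A$ norming for $\Lip(M,\R)$ iff $\cconv(A)=B_{\mathcal F(M)}$). For (i)$\Rightarrow$(ii), assume property $\alpha$ is witnessed by $\Gamma \subset S_{\mathcal F(M)}$ with functionals $\{x^*_\gamma\}$ and constant $\rho<1$. By Proposition \ref{prop:alphafreestrexp}, $\Gamma \subset \Mol{M}$, so $\Gamma = \{m_{x,y} \colon (x,y)\in\Lambda\}$ for a suitable $\Lambda$. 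The $\alpha$-bound gives $\|m_{x,y}-m_{u,v}\| \geq 1-\rho$ whenever $m_{u,v}\neq\pm m_{x,y}$ (with distance $2$ in the opposite case), producing uniform discreteness; uniform Gromov rotundness follows from Proposition \ref{prop:charunifstrexp} since $\Gamma$ is uniformly strongly exposed; and $\cconv(A)=B_{\mathcal F(M)}$ is condition (iii) of property $\alpha$.

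For (ii)$\Rightarrow$(i) together with the \emph{moreover} clause, the central step is to identify $A$ with $\strexp{B_{\mathcal F(M)}}$. Uniform discreteness forces $A$ to be closed in $\mathcal F(M)$, and Proposition \ref{prop:charunifstrexp} combined with uniform Gromov rotundness gives $A \subset \strexp{B_{\mathcal F(M)}}$. Conversely, since $\cconv(A) = B_{\mathcal F(M)}$, every slice of $B_{\mathcal F(M)}$ meets $A$; hence every denting point, lying in slices of arbitrarily small diameter, belongs to $\overline A = A$. Combined with $\strexp{B_{\mathcal F(M)}}\subset\dent{B_{\mathcal F(M)}}$, this yields $A = \strexp{B_{\mathcal F(M)}} = \dent{B_{\mathcal F(M)}}$, which is the moreover assertion. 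In particular $A$ is balanced, because strongly exposed points are closed under negation.

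It remains to verify property $\alpha$ with witnessing set $A$ and functionals $\{h_{x,y}\}_{(x,y)\in\Lambda}$ from Lemma \ref{lemma:failZuniformly}, which satisfy $\|h_{x,y}\|_L = h_{x,y}(m_{x,y}) = 1$, the uniform exposure estimate, and the symmetry $h_{y,x} = -h_{x,y}$ visible from their explicit construction. Choose $\varepsilon < \delta_0$ where $\delta_0$ is the uniform discreteness constant of $A$, and let $\delta>0$ be the resulting exposure parameter. For $(u,v)\in\Lambda$ with $m_{u,v} \neq \pm m_{x,y}$: if $h_{x,y}(m_{u,v}) > 1-\delta$ then $\|m_{x,y}-m_{u,v}\| < \varepsilon < \delta_0$, contradicting uniform discreteness of $A$; symmetrically, $h_{x,y}(m_{u,v}) < -1+\delta$ would give $\|m_{y,x}-m_{u,v}\| < \delta_0$, again a contradiction (using that $m_{y,x}\in A$, which holds since $A$ is balanced). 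Hence $|h_{x,y}(m_{u,v})| \leq 1-\delta =: \rho < 1$, establishing property $\alpha$. The main obstacle is the identification $A = \strexp{B_{\mathcal F(M)}}$; once secured, the balancedness it provides is precisely what makes the $\alpha$-bound work against both $+m_{u,v}$ and $-m_{u,v}$ in the witnessing set.
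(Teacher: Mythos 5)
Your proof is correct and follows the same route as the paper's: Schachermayer's fact together with Proposition \ref{prop:charunifstrexp} and Lemma \ref{lemma:unifdiscrmolec} for (i)$\Rightarrow$(ii), and the functionals of Lemma \ref{lemma:failZuniformly} tested against the uniform discreteness constant for the converse. You are in fact slightly more careful than the paper on one point: the paper only records the one-sided estimate $\langle h_{x,y}, m_{u,v}\rangle \leq 1-\delta$ and invokes Proposition \ref{prop:alphafreestrexp} afterwards, whereas you first establish $A=\strexp{B_{\mathcal F(M)}}=\dent{B_{\mathcal F(M)}}$ directly from (ii), deduce that $A$ is balanced, and use this to obtain the required two-sided bound $|\langle h_{x,y}, m_{u,v}\rangle|\leq 1-\delta$ demanded by the definition of property $\alpha$.
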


\begin{proof}
	(i)$\Rightarrow$(ii). Let $A\subset S_{\mathcal F(M)}$ witnessing that $\mathcal F(M)$ has property $\alpha$. Then $B_{\mathcal F(M)} = \cconv(A)$. Moreover, it is clear that $A$ is uniformly discrete and it is known that it is uniformly strongly exposed \cite[Fact in p.~202]{Schachermayer}, so Proposition~\ref{prop:charunifstrexp} and Lemma \ref{lemma:unifdiscrmolec} give the result.
	
	(ii)$\Rightarrow$(i). Let $A=\{m_{x,y}\}_{(x,y)\in \Lambda}$ be a set of molecules satisfying the properties in the statement. Let $B=\{h_{x,y}\}_{(x,y)}\subset S_{\Lip(M,\R)}$ be the family provided by Lemma~\ref{lemma:failZuniformly}. By Lemma~\ref{lemma:unifdiscrmolec},
    \[ \varepsilon = \inf\{\norm{m_{x,y}-m_{u,v}}\colon m_{x,y}, m_{u,v}\in A,\, m_{x,y}\neq m_{u,v}\}>0.\]
    Take $\delta>0$ such that \eqref{eq:lemmab} in Lemma \ref{lemma:failZuniformly} holds for that $\varepsilon$. Then, $$\langle h_{x,y}, m_{u,v}\rangle  \leq 1-\delta$$ whenever $m_{x,y}, m_{u,v}\in A$ and $m_{x,y}\neq \pm m_{u,v}$. Thus, $\mathcal F(M)$ has property $\alpha$.

    The last assertion follows from Proposition~\ref{prop:alphafreestrexp}.
\end{proof}

We can provide an easier characterisation in the bounded and uniformly discrete case.

\begin{proposition}\label{prop:caralphaudiscreto} Let $M$ be a pointed bounded and uniformly discrete metric space. The following are equivalent:
	\begin{enumerate}[(i)]
		\item $\mathcal F(M)$ has property $\alpha$.
        \item the set $\strexp{B_{\mathcal F(M)}}$ consists of uniformly strongly exposed points (equivalently, it is uniformly Gromov rotund).
		\item there is $\varepsilon >0$ such that for every $x,y\in M$ with $x\neq y$, \[\text{either}\quad \inf_{z\in M\setminus \{x,y\}} (x,y)_z = 0 \quad \text{or} \quad \inf_{z\in M\setminus\{x,y\}} (x,y)_z \geq \varepsilon.\]
	\end{enumerate}
\end{proposition}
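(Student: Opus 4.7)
My plan is to prove the cyclic chain (i)$\Rightarrow$(ii)$\Rightarrow$(iii)$\Rightarrow$(i). The first implication is immediate: if $\Gamma\subset S_{\mathcal F(M)}$ witnesses property~$\alpha$, then Proposition~\ref{prop:alphafreestrexp} identifies $\Gamma$ with $\strexp{B_{\mathcal F(M)}}$, and by the Fact on page~202 of~\cite{Schachermayer} this $\Gamma$ is automatically a set of uniformly strongly exposed points.

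The equivalence (ii)$\Leftrightarrow$(iii) is a purely metric translation that takes advantage of the standing hypotheses on $M$. Set $\theta:=\inf_{x\neq y}d(x,y)>0$ and $D:=\diam M<\infty$. For any three pairwise distinct points $x,y,z$ in $M$ we have $\theta\leq\min\{d(x,z),d(y,z)\}\leq D$, so a bound of the form $(x,y)_z\geq\varepsilon\min\{d(x,z),d(y,z)\}$ is, up to the universal constants $\theta$ and $D$, the same as a bound of the form $(x,y)_z\geq\varepsilon'$. Two observations follow: Proposition~\ref{prop:extremalidad}(b) simplifies in this setting to the statement that $m_{x,y}$ is strongly exposed if and only if $\inf_{z\neq x,y}(x,y)_z>0$; and the uniform Gromov rotundness of $\strexp{B_{\mathcal F(M)}}$ amounts to a uniform positive lower bound on these infima across strongly exposed molecules. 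Combining the two observations, one sees that (ii) is precisely the dichotomy condition (iii).

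For (iii)$\Rightarrow$(i), my plan is to apply Proposition~\ref{prop:charalpha} to $A := \strexp{B_{\mathcal F(M)}}$, which is automatically balanced because $m_{y,x}=-m_{x,y}$ is strongly exposed whenever $m_{x,y}$ is. The uniform discreteness of $A$ is almost free: by Lemma~\ref{lemma:ineqmolec} combined with the bounds $\theta\leq d(\cdot,\cdot)\leq D$, any two distinct molecules satisfy $\|m_{x,y}-m_{u,v}\|\geq\min\{1,\theta/D\}$, so Lemma~\ref{lemma:unifdiscrmolec} supplies the metric form of uniform discreteness required in Proposition~\ref{prop:charalpha}. The uniform Gromov rotundness of $A$ is precisely the rephrasing of (iii) obtained in the previous paragraph.

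The main obstacle is the remaining condition in Proposition~\ref{prop:charalpha}, namely that $\cconv(A)=B_{\mathcal F(M)}$. To clear this step I will use that the uniform discreteness of $M$ gives the RNP of $\mathcal F(M)$ by Example~\ref{ejernp}(a), and then invoke the classical Phelps--Bourgain theorem: in any Banach space with the RNP, every closed bounded convex subset coincides with the closed convex hull of its strongly exposed points. Applied to $B_{\mathcal F(M)}$ this yields the desired identity $\cconv(A)=B_{\mathcal F(M)}$, completing the verification of the three conditions in Proposition~\ref{prop:charalpha}(ii) and thus giving property~$\alpha$ for $\mathcal F(M)$.
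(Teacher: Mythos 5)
Your proposal is correct and follows essentially the same route as the paper: (i)$\Rightarrow$(ii) via Proposition~\ref{prop:alphafreestrexp} and the Schachermayer fact, the translation between (ii) and (iii) using the two-sided bounds $\theta\leq\min\{d(x,z),d(y,z)\}\leq D$ available in a bounded uniformly discrete space, and (iii)$\Rightarrow$(i) by feeding $A=\strexp{B_{\mathcal F(M)}}$ into Proposition~\ref{prop:charalpha}, with $\cconv(A)=B_{\mathcal F(M)}$ coming from the RNP of $\mathcal F(M)$. The only cosmetic difference is that you verify the uniform discreteness of $A$ in norm via Lemmas~\ref{lemma:ineqmolec} and~\ref{lemma:unifdiscrmolec}, whereas the paper checks the equivalent metric inequality $d(x,u)+d(v,y)\geq\delta\,d(x,y)$ directly.
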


\begin{proof} Denote $D = \sup\{d(x,y)\colon x\neq y\}<\infty$ and $\theta = \inf\{d(x,y)\colon x\neq y\}>0$.

(i)$\Rightarrow$(ii) follows from Propositions \ref{prop:alphafreestrexp} and \ref{prop:charalpha}.

Next, assume that (ii) holds. Then, there is $\varepsilon>0$ such that
\[ (x,y)_z \geq \varepsilon \min\{d(x,z),d(z,y)\}\geq \varepsilon \theta\]
whenever $m_{x,y}\in \strexp{B_{\mathcal F(M)}}$. So, given $x, y\in M$, $x\neq y$, either $m_{x,y}$ is strongly exposed, and then $\inf_{z\in M\setminus\{x,y\}} (x,y)_z \geq \varepsilon\theta$, or $m_{x,y}$ is not strongly exposed, and then
\[ \inf_{z\in M\setminus\{x,y\}} (x,y)_z \leq D\inf_{z\in M\setminus\{x,y\}} \frac{(x,y)_z}{\min\{d(x,z),d(y,z)\}} =0.\]
This gives (iii).

Finally, assume that (iii) holds and let $A=\strexp{B_{\mathcal F(M)}}$. Then, for every $m_{x,y}\in A$ we have that $\inf_{z\in M\setminus \{x,y\}} (x,y)_z > 0$ and so \[\inf_{z\in M\setminus \{x,y\}} (x,y)_z \geq \varepsilon\geq \frac{\varepsilon}{D}\min\{d(x,z),d(z,y)\}.\]
That is, $A$ is uniformly Gromov rotund. Moreover, $B_{\mathcal F(M)}=\cconv(A)$ since $\mathcal F(M)$ has the RNP. Finally,
\[ d(x,u)+d(v,y)\geq \delta d(x,y),\]
for every distinct pairs of points $(x,y), (u,v) \in \{(p,q)\in M\times M\colon p\neq q\}$, where $\delta = 2\theta/D$. By Proposition~\ref{prop:charalpha}, $\mathcal F(M)$ has property $\alpha$, getting (i).
\end{proof}

Let us exhibit some examples of metric spaces such that $\mathcal F(M)$ has property $\alpha$.

\begin{example}\label{ex:spacesAlpha} The space $\mathcal F(M)$ has property $\alpha$ in the following cases:
\begin{enumerate}[(a)]
\item $M$ is finite.
\item  $M$ is a compact subset of $\mathbb R$ with measure $0$.
\item There exists a constant $1\leq D<2$ such that
$$1\leq d(x,y)<D$$
holds for every pair of distinct points $x,y\in M$ (equivalently, up to rescaling, there are constants $C>0$ and $1\leq D < 2$ such that $C\leq d(x,y)<CD$ for all $x,y\in M$, $x\neq y$).
\end{enumerate}
\end{example}

\begin{proof}
(a). Given $m_{x,y}\in \strexp{B_{\mathcal F(M)}}$, consider a strongly exposing functional $g_{x,y}\in S_{\Lip(M,\R)}$. Take $\rho$ to be the maximum of the set
\[
\{|\langle g_{x,y}, m_{u,v}\rangle | \colon m_{x,y}\in \strexp{B_{\mathcal F(M)}}, m_{u,v}\in \Mol{M}, m_{x,y}\neq \pm m_{u,v}\}.\]
Then, $\rho<1$ since $M$ is finite. Moreover, $\mathcal F(M)$ is finite dimensional and so $B_{\mathcal F(M)}$ is the closed convex hull of its strongly exposed points. Thus, $\mathcal F(M)$ has property $\alpha$.

\noindent (b). $\mathcal F(M)$ is isometric to $\ell_1$ by \cite{Godard}, so it clearly has property $\alpha$.

\noindent (c). Let $0<\varepsilon<\frac{2}{D}-1$. Observe that given $x,y,z\in M$, we get
\[\begin{split}
\varepsilon \min\{d(x,z),d(y,z)\}& \leq \varepsilon D<2-D\leq d(x,z)+d(y,z)-D\\
& \leq d(x,z)+d(y,z)-d(x,y)=2(x,y)_z.
\end{split}\]
Consequently, if we define $\Lambda:=\{(p,q)\in M\times M\colon p\neq q\}$, then $\Lambda$ satisfies the condition (ii) in Proposition \ref{prop:charalpha}, and so $\mathcal F(M)$ has property $\alpha$.
\end{proof}

The next result provides a characterisation of those concave metric spaces for which $\mathcal F(M)$ has property $\alpha$.

\begin{theorem}\label{alpha+unifconc}
	Let $M$ be a complete pointed concave metric space. Then the following are equivalent:
    \begin{enumerate}[(i)]
    \item $\mathcal F(M)$ has property $\alpha$.
    \item $M$ is uniformly discrete and bounded, and there is $\varepsilon>0$ such that
    \[ d(x,z)+d(z,y)-d(x,y)\geq \varepsilon\]
whenever $x,y,z$ are distinct points in $M$.
\end{enumerate}
\end{theorem}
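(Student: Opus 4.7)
The plan is to unwind both implications through Proposition~\ref{prop:charalpha}, using concavity of $M$ to identify the balanced witnessing set for property $\alpha$ with the full set of molecules.

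For (i)$\Rightarrow$(ii), since $M$ is concave every molecule is a preserved extreme point of $B_{\mathcal F(M)}$, and by Proposition~\ref{prop:alphafreestrexp} the set $\Gamma$ witnessing property $\alpha$ satisfies $\Gamma=\preext{B_{\mathcal F(M)}}=\Mol M$. Proposition~\ref{prop:charalpha} then yields that the whole of $\Mol M$ is uniformly Gromov rotund with some $\varepsilon_0>0$ and uniformly discrete in $\mathcal F(M)$ with some $\delta_0>0$. The task is to pull these back to the metric via Lemma~\ref{lemma:ineqmolec}. First, $M$ must be uniformly discrete: if there were distinct points $p_n,q_n\in M$ with $d(p_n,q_n)\to 0$, a case split produces a fixed $z\in M$ with $z\neq p_n,q_n$ for large $n$ and $\max\{d(p_n,z),d(q_n,z)\}$ bounded below (either $\{p_n\}$ has a convergent subsequence in $M$ by completeness, in which case any $z$ different from its limit works, or else $\{p_n\}$ fails to be totally bounded, so it admits an $\eta$-separated subsequence $\{p_{n_k}\}$ and one takes $z=p_{n_1}$); Lemma~\ref{lemma:ineqmolec} then gives
\[\|m_{p_n,z}-m_{q_n,z}\|\leq\frac{2\,d(p_n,q_n)}{\max\{d(p_n,z),d(q_n,z)\}}\longrightarrow 0,\]
contradicting uniform discreteness of $\Mol M$. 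A parallel argument shows $M$ is bounded: if $d(x_n,0)\to\infty$, comparing $m_{x_n,0}$ with $m_{x_n,y_0}$ for any fixed $y_0\neq 0$ yields $\|m_{x_n,0}-m_{x_n,y_0}\|\leq 2d(0,y_0)/\max\{d(x_n,0),d(x_n,y_0)\}\to 0$, again a contradiction. Once $\sigma:=\inf_{x\neq y}d(x,y)>0$ is in hand, uniform Gromov rotundity immediately yields the strict triangle inequality
\[d(x,z)+d(y,z)-d(x,y)=2(x,y)_z\geq 2\varepsilon_0\min\{d(x,z),d(y,z)\}\geq 2\varepsilon_0\sigma\]
for every three distinct points of $M$.

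For (ii)$\Rightarrow$(i), I would apply Proposition~\ref{prop:charalpha} directly with $\Lambda:=\{(p,q)\in M\times M\colon p\neq q\}$ and $A:=\Mol M$. Writing $\sigma:=\inf_{x\neq y}d(x,y)>0$ and $D:=\diam M<\infty$, the three required conditions reduce to routine computations: for distinct ordered pairs $(x,y),(u,v)\in\Lambda$ one has $d(x,u)+d(y,v)\geq\sigma\geq(\sigma/D)\,d(x,y)$, so $\Mol M$ is uniformly discrete; the hypothesis together with $\min\{d(x,z),d(y,z)\}\leq D$ gives $(x,y)_z\geq(\varepsilon/(2D))\min\{d(x,z),d(y,z)\}$, so $\Mol M$ is uniformly Gromov rotund; and $\overline{\co}(\Mol M)=B_{\mathcal F(M)}$ is standard, since $\Mol M$ is balanced and norming.

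The main obstacle is the metric reduction in (i)$\Rightarrow$(ii): one must actually produce, for every offending sequence $\{(p_n,q_n)\}$, a uniform witness $z$ against which the two molecules can be compared in the estimate of Lemma~\ref{lemma:ineqmolec}. The completeness-based case split above handles every possible accumulation behaviour of $\{p_n\}$; the (ii)$\Rightarrow$(i) direction, by contrast, is essentially just bookkeeping once the correct choice of $\Lambda=\{(p,q)\colon p\neq q\}$ is made.
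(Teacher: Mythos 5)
Your proof is correct, and its overall skeleton matches the paper's: both directions funnel through the identification $\Gamma=\preext{B_{\mathcal F(M)}}=\Mol{M}$ (concavity plus Proposition~\ref{prop:alphafreestrexp}) and the metric characterisations of property $\alpha$. The genuine difference is in how you extract boundedness and uniform discreteness of $M$ in (i)$\Rightarrow$(ii). You argue by contradiction, using only that $\Mol{M}$ is uniformly discrete in $\mathcal F(M)$ together with the upper bound of Lemma~\ref{lemma:ineqmolec}, and you need a completeness/total-boundedness case split to manufacture a fixed witness $z$ with $\max\{d(p_n,z),d(q_n,z)\}$ bounded below; this works, but it is the most laborious part of your argument. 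The paper instead uses the property-$\alpha$ functionals directly: for distinct $x,u$ and any $y\neq x,u$, the $\alpha$-estimate gives $\norm{m_{x,y}-m_{u,y}}\geq 1-\rho$, which combined with Lemma~\ref{lemma:ineqmolec} yields $(1-\rho)\,d(x,y)\leq 2d(x,u)$; taking the supremum over $y$ gives boundedness and then $(1-\rho)\diam(M)\leq 4d(x,u)$ gives uniform discreteness in two lines, with no sequential compactness needed. For (ii)$\Rightarrow$(i) you invoke Proposition~\ref{prop:charalpha} with $\Lambda=\{(p,q)\colon p\neq q\}$ while the paper quotes Proposition~\ref{prop:caralphaudiscreto}; these are essentially the same computation (the latter's proof runs through the former). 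One cosmetic point: Proposition~\ref{prop:charalpha} asks for a strict inequality $(x,y)_z>\varepsilon'\min\{d(x,z),d(y,z)\}$, so you should take $\varepsilon'=\varepsilon/(4D)$, say, rather than $\varepsilon/(2D)$; this is trivial to fix.
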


\begin{proof}
Assume first that $\mathcal F(M)$ has property $\alpha$ with constant $\rho>0$. By Proposition~\ref{prop:alphafreestrexp}, the set $\Gamma\subset S_{\mathcal{F}(M)}$ witnessing property $\alpha$ coincides with $\preext{B_{\mathcal F(M)}}$, so $\Gamma=\Mol{M}$ as $M$ is concave. Now, take $m_{x,y},m_{u,y}\in \Mol{M}=\Gamma$ and let $g_{x,y}\in S_{\Lip(M,\R)}$ be the functional associated to $m_{x,y}$. Then, by Lemma \ref{lemma:ineqmolec}, we have that
\begin{equation*}
2\frac{d(x,u)}{d(x,y)}\geq \norm{m_{x,y}-m_{u,y}}\geq |g_{x,y}(m_{x,y}-m_{u,y})|\geq 1-\rho.
\end{equation*}
From here, given $x,u\in M$ we have that
\[(1-\rho)\sup\limits_{y\in M} d(x,y)\leq 2d(x,u),\]
from where it follows that $M$ is bounded. Moreover, the following estimate holds:
\[(1-\rho)\diam(M)\leq 2(1-\rho)\sup\limits_{y\in M} d(x,y)\leq 4d(x,u).\]
Since $x,u\in M$ were arbitrary we conclude that $M$ is uniformly discrete. Now, Proposition~\ref{prop:caralphaudiscreto} provides $\varepsilon>0$ such that $(x,y)_z\geq \varepsilon$ whenever $m_{x,y}\in \strexp{B_{\mathcal F(M)}}$ and $z\in M\setminus\{x,y\}$. Since every molecule is strongly exposed, the conclusion follows.

Finally, the converse statement follows from Proposition~\ref{prop:caralphaudiscreto}.
\end{proof}

As an application of the previous theorem, we may show that $D=2$ is not possible in Example \ref{ex:spacesAlpha}.c.

\begin{example}
Let $M=\{0,x_n,y_n\colon n\geq 2\}\subseteq c_0$, where $x_n:=(2-\frac{1}{n}) e_n$ and $y_n:=e_n+(1+\frac{1}{n})e_1$ for every $n\geq 2$. It can be proved routinely that $M$ is concave by using the characterisation of the preserved extreme points given in \cite{ag}. On the other hand, it is clear that the inequality $$1\leq d(x,y)<2$$ holds for every $x,y\in M$ with $x\neq y$. Nevertheless, one has that $$d(0,y_n)+d(y_n,x_n)-d(0,x_n)=\frac{3}{n}$$ for every $n\geq 2$, so $\mathcal F(M)$ fails property $\alpha$ by Theorem \ref{alpha+unifconc}.
\end{example}

\subsection[Property quasi-alpha]{Property quasi-\texorpdfstring{$\alpha$}{alpha}}
In \cite{ChoiSong} it is defined a property which, in spite of being weaker than property $\alpha$, still implies property $A$. As in the case of property $\alpha$, we have slightly modified the original definition to an equivalent one which requires the set $\{x_\lambda\}_{\lambda \in \Lambda} \subseteq X$ bellow to be balanced.

\begin{definition}
	A Banach space $X$ is said to have \textit{property quasi-$\alpha$} if there exist a balanced subset $A:=\{x_\lambda\}_{\lambda \in \Lambda}$ of $X$, a subset $\{x_\lambda^*\}_{\lambda \in \Lambda}\subseteq X^*$, and $\rho \colon \Lambda \longrightarrow \mathbb{R}$ such that
	\begin{itemize}
		\item[a)] $\| x_\lambda \|= \| x^*_\lambda\| =\| x^*_\lambda (x_\lambda)\|=1 $ for all $\lambda \in \Lambda$.
		\item[b)] $|x^*_\lambda(x_\mu)| \leq \rho(\mu)<1$ for all $x_\lambda \neq \pm x_\mu$.
		\item[c)] For every $e \in \ext{B_{X^{**}}}$, there exists a subset $A_e \subseteq A$ such that either $e$ or $-e$ belong to $\overline{A_e}^{\omega^*}$ and $r_e=\sup\{\rho(\mu)\colon x_\mu \in A_e\}<1$.
	\end{itemize}
\end{definition}

It follows that if $\{x_\lambda\}_{\lambda \in \Lambda}$ witnesses that $X$ has property quasi-$\alpha$, then $$B_X=\overline{\co}(\{x_\lambda\colon \lambda \in \Lambda\}).$$ Moreover, the same argument as the one used for property $\alpha$ in \cite[Fact in p.~202]{Schachermayer}, shows that for every $\lambda \in \Lambda$, $\varepsilon >0$, and $x \in B_X$, one has that
\[ x^*_\lambda(x)> 1-\varepsilon(1-\rho(\lambda)) \Longrightarrow \| x-x_\lambda\| < 2 \varepsilon;\]
so each $x_\lambda$ is strongly exposed in $B_X$ by $x^*_\lambda$. But now, as $\sup_{\lambda\in \Lambda}\rho(\lambda)$ may be equal to one, we do not get that $\{x_\lambda\}_{\lambda \in \Lambda}$ is a set of uniformly strongly exposed points. Nevertheless, the proof of Proposition 2.1 in \cite{ChoiSong} shows that if $\mathcal{F}(M)$ has property quasi-$\alpha$ then the set
\[
\mathcal{A}:=\left\{T \in \mathcal{L}(\mathcal{F}(M), Y) \colon \| T \|=\|T(x_\lambda)\| \text{ for some } \lambda \in \Lambda \right\}
\]
is norm-dense in $\mathcal{L}(\mathcal{F}(M), Y) \equiv \Lip(M,Y)$. Now, every $x_\lambda$ is a strongly exposed point of $B_{\mathcal{F}(M)}$, and so, a molecule by Proposition \ref{prop:extremalidad}. Thus, $\mathcal{A}\subseteq \SA(M,Y)$. We have proved the following.

\begin{proposition}\label{prop:qalphaimplidensity}
	Let $M$ be a complete pointed metric space and assume that $\mathcal{F}(M)$ has property quasi-$\alpha$. Then $\SA(M,Y)$ is norm-dense in $\Lip(M,Y)$ for every Banach space $Y$.
\end{proposition}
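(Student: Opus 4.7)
The plan is to transcribe the Choi--Song density theorem into our Lipschitz setting, using in an essential way the fact (already recorded in Proposition~\ref{prop:extremalidad}(a)) that strongly exposed points of $B_{\mathcal F(M)}$ are automatically molecules. In this way, any operator on $\mathcal F(M)$ that attains its norm at one of the witnessing points $x_\lambda$ gives a Lipschitz map that strongly attains its Lipschitz norm at a pair $(p_\lambda,q_\lambda)\in M\times M$ with $p_\lambda\neq q_\lambda$.

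First I would verify that the witnessing set $A=\{x_\lambda\}_{\lambda\in\Lambda}$ consists of strongly exposed points of $B_{\mathcal F(M)}$. This is exactly the implication spelled out in the paragraph immediately preceding the proposition: if $x_\lambda^*(z)>1-\varepsilon(1-\rho(\lambda))$ for some $z\in B_{\mathcal F(M)}$, then $\|z-x_\lambda\|<2\varepsilon$, so $x_\lambda^*$ strongly exposes $x_\lambda$. Invoking Proposition~\ref{prop:extremalidad}(a), each $x_\lambda$ therefore lies in $\Mol{M}$, i.e.\ $x_\lambda=m_{p_\lambda,q_\lambda}$ for some distinct $p_\lambda,q_\lambda\in M$.

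Second, I would quote (the proof of) \cite[Proposition~2.1]{ChoiSong}, which in our setting yields that the subset
$$
\mathcal{A}:=\bigl\{T\in\mathcal L(\mathcal F(M),Y)\colon \|T\|=\|T x_\lambda\|\text{ for some }\lambda\in\Lambda\bigr\}
$$
is norm-dense in $\mathcal L(\mathcal F(M),Y)$. Under the canonical isometric identification $\mathcal L(\mathcal F(M),Y)\equiv\Lip(M,Y)$, every $T\in\mathcal{A}$ corresponds to some $f\in\Lip(M,Y)$ with
$$
\|f\|_L=\|T\|=\|T m_{p_\lambda,q_\lambda}\|=\frac{\|f(p_\lambda)-f(q_\lambda)\|}{d(p_\lambda,q_\lambda)},
$$
so $f\in\SA(M,Y)$. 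Hence $\mathcal{A}\subseteq\SA(M,Y)$ and the density of $\SA(M,Y)$ in $\Lip(M,Y)$ follows.

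The only nontrivial ingredient is the Choi--Song perturbation argument itself, but since it is already available in \cite{ChoiSong} no serious obstacle remains; the whole content of the proposition lies in the observation of the first step, namely that in a Lipschitz-free space the witnessing quasi-$\alpha$ set is automatically a family of molecules, which turns the abstract density of norm-attaining operators into the stronger density of \emph{strongly} norm-attaining Lipschitz maps.
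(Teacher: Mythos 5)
Your proposal is correct and follows essentially the same route as the paper: you first observe (via the Schachermayer-type estimate) that each witnessing point $x_\lambda$ is strongly exposed, hence a molecule by Proposition~\ref{prop:extremalidad}, and then invoke the Choi--Song density argument for the set of operators attaining their norm at some $x_\lambda$. Nothing is missing.
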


An analogous argument to the one given in the proof of Proposition~\ref{prop:alphafreestrexp} shows the following:

\begin{proposition}\label{prop:qalphafreestrexp} Let $M$ be a complete pointed metric space and assume that $\mathcal F(M)$ has property quasi-$\alpha$ witnessed by a set $\Gamma\subset S_{\mathcal F(M)}$. Then, $$\preext{B_{\mathcal F(M)}}\subset \overline{\Gamma}.$$
\end{proposition}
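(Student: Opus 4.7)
The plan is to adapt the first portion of the argument that is sketched for Proposition \ref{prop:alphafreestrexp}. That argument really has two halves: the first one shows $\preext{B_X} \subset \overline{\Gamma}$ for any Banach space $X$ such that a set $\Gamma \subset S_X$ satisfies $B_X = \overline{\co}(\Gamma)$; the second half uses that, under property $\alpha$, the set $\Gamma$ is uniformly discrete (hence closed) to upgrade the inclusion to $\preext{B_X} \subset \Gamma$. Under property quasi-$\alpha$ only the first half will survive, because $\sup_{\lambda} \rho(\lambda)$ is allowed to equal $1$ and so $\Gamma$ need not be uniformly discrete; this is exactly why the statement to be proved only asserts the weaker inclusion into $\overline{\Gamma}$.

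Concretely, I would proceed as follows. Let $\Gamma = \{x_\lambda\}_{\lambda \in \Lambda}$ be the set witnessing property quasi-$\alpha$. The remark immediately following the definition of quasi-$\alpha$ records that $B_{\mathcal F(M)} = \overline{\co}(\Gamma)$. Now pick $x \in \preext{B_{\mathcal F(M)}}$. By Proposition \ref{prop:extremalidad}(a), $x$ is actually a denting point of $B_{\mathcal F(M)}$, so the slices of $B_{\mathcal F(M)}$ containing $x$ form a neighbourhood basis of $x$ for the norm topology on $B_{\mathcal F(M)}$. Given $\varepsilon > 0$, pick a slice $S(B_{\mathcal F(M)}, f, \beta)$ that contains $x$ and has diameter less than $\varepsilon$. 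Since $B_{\mathcal F(M)} = \overline{\co}(\Gamma)$, the supremum of $f$ over $\Gamma$ equals $\|f\| = 1$, so there is some $\lambda \in \Lambda$ with $f(x_\lambda) > 1 - \beta$; that $x_\lambda$ lies in the same slice as $x$, and therefore $\|x - x_\lambda\| < \varepsilon$. Letting $\varepsilon \to 0$ yields $x \in \overline{\Gamma}$, which is the desired conclusion.

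The main obstacle is essentially non-existent here: the argument is a routine unpacking of the denting-point definition combined with the fact that any functional attaining norm $1$ on the unit ball must come arbitrarily close to that norm on any norming set. The only subtlety worth flagging is the failure of $\Gamma$ to be closed in the quasi-$\alpha$ setting, which is precisely what forces the conclusion to involve $\overline{\Gamma}$ rather than $\Gamma$ itself, and which also explains why we cannot match the equality in Proposition \ref{prop:alphafreestrexp}.
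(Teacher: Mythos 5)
Your argument is correct and is essentially the paper's own proof: the paper simply points to the first half of the argument given for Proposition~\ref{prop:alphafreestrexp}, namely that a preserved extreme point is denting by Proposition~\ref{prop:extremalidad}, that every slice of $B_{\mathcal F(M)}$ meets $\Gamma$ because $B_{\mathcal F(M)}=\cconv(\Gamma)$, and hence that the point lies in $\overline{\Gamma}$. Your remark about why the closedness of $\Gamma$ (and thus the equality of Proposition~\ref{prop:alphafreestrexp}) is lost in the quasi-$\alpha$ setting is also accurate.
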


As a consequence, we obtain the following result in the case when $M$ is concave.

\begin{proposition}\label{qalpha+unifconc}
Let $M$ be a concave complete pointed metric space. If $\mathcal F(M)$ has property quasi-$\alpha$, then the set of isolated points of $M$ is dense in $M$.
\end{proposition}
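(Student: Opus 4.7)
The plan is to assume, for contradiction, that the isolated points of $M$ are not dense, and to extract a contradiction from the interplay between concavity and the quasi-$\alpha$ structure. My argument rests on two preliminary observations. First, since $M$ is concave, every molecule is a preserved extreme point of $B_{\mathcal F(M)}$, so Proposition~\ref{prop:qalphafreestrexp} gives $\Mol{M}\subseteq \overline{\Gamma}$; combined with $\Gamma\subseteq \strexp{B_{\mathcal F(M)}}\subseteq \Mol{M}$ and the closedness of $\Mol{M}$ from Proposition~\ref{prop:extremalidad}, this yields $\overline{\Gamma}=\Mol{M}$. Second, I claim that \emph{no} $x_\lambda\in \Gamma$ is the norm-limit of a sequence in $\Gamma\setminus\{\pm x_\lambda\}$: if $x_{\lambda_n}\to x_\lambda$ with $x_{\lambda_n}\in \Gamma\setminus\{\pm x_\lambda\}$, then the identity $x^*_{\lambda_n}(x_\lambda)=1+x^*_{\lambda_n}(x_\lambda-x_{\lambda_n})$ together with the estimate $|x^*_{\lambda_n}(x_\lambda-x_{\lambda_n})|\leq \|x_\lambda-x_{\lambda_n}\|\to 0$ forces $x^*_{\lambda_n}(x_\lambda)\to 1$, contradicting the bound $|x^*_{\lambda_n}(x_\lambda)|\leq \rho(\lambda)<1$ from condition~(b).

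With these in hand, suppose $B(x_0,r_0)\subseteq M$ contains no isolated point of $M$; then $M$ must have another point, so I can pick $y\in M$ with $d(x_0,y)>0$ and a sequence $x_n\to x_0$ in $M\setminus\{x_0\}$. Lemma~\ref{lemma:ineqmolec} gives $0<\|m_{x_n,y}-m_{x_0,y}\|\to 0$; write $e:=m_{x_0,y}\in \Mol{M}$. I split into two cases. If $e\in \Gamma$, for each $n$ I would select $m_n\in \Gamma$ with $\|m_n-m_{x_n,y}\|<\tfrac13\|m_{x_n,y}-e\|$ (possible by $\Mol{M}=\overline{\Gamma}$); the reverse triangle inequality forces $m_n\neq e$ while $m_n\to e$ and $m_n\neq -e$ eventually, producing a sequence in $\Gamma\setminus\{\pm e\}$ that converges to $e\in \Gamma$ and thus contradicts the second observation. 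If $e\notin \Gamma$, condition~(c) of quasi-$\alpha$ applied to $e\in \ext{B_{\mathcal F(M)^{**}}}$ (extreme because preserved extreme in $B_{\mathcal F(M)}$ by concavity) yields $A_e\subseteq \Gamma$ with $e$ or $-e$ in $\overline{A_e}^{\omega^*}$ and $r_e<1$, and the symmetry of $\Gamma$ reduces to the former. Since $e$ is denting in $B_{\mathcal F(M)}$, weak-star slices of $B_{\mathcal F(M)^{**}}$ around $e$ have arbitrarily small diameter, so I can extract a sequence of distinct $x_{\mu_n}\in A_e$ with $x_{\mu_n}\to e$ in norm; moreover $x_{\mu_n}\neq \pm e$ because $e\notin A_e$ and $\|x_{\mu_n}+e\|\to 2$. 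For $n\neq k$ large, $x_{\mu_n}\neq \pm x_{\mu_k}$ (distinct elements of $\Gamma$ with $\|x_{\mu_n}+x_{\mu_k}\|\to 2$), so condition~(b) gives $|x^*_{\mu_n}(x_{\mu_k})|\leq \rho(\mu_k)\leq r_e$; letting $k\to \infty$ for fixed $n$ yields $|x^*_{\mu_n}(e)|\leq r_e<1$, contradicting $x^*_{\mu_n}(e)\to 1$ obtained exactly as in the second observation.

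The hard part will be the case $e\notin \Gamma$: there the per-point bound $\rho(\lambda)<1$ from condition~(b) alone cannot produce the uniform control needed along an accumulating sequence, and it is precisely the role of condition~(c) to supply, via $A_e$, a single bound $r_e<1$ governing a whole norm-approximating sequence. The step from the weak-star approximation guaranteed by condition~(c) to an honest norm-approximating sequence crucially uses that $e$ is a denting point, which is where the concavity of $M$ enters through Proposition~\ref{prop:extremalidad}.
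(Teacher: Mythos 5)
Your proof is correct, and it takes a genuinely different route from the paper's; in fact it establishes a formally stronger conclusion. The paper argues directly and quantitatively: combining condition (b) with Lemma~\ref{lemma:ineqmolec} it shows that whenever $m_{x,y}\in\Gamma$ the point $x$ is isolated in $M$ (with an explicit radius of order $(1-\rho)d(x,y)$), and then it deduces density of the set $A$ of such points $x$ from the fact that $\Gamma$ is norming, by testing against the Lipschitz function $t\mapsto d(t,A)-d(0,A)$; condition (c) of quasi-$\alpha$ is never used there. You instead argue by contradiction at a single non-isolated point: your first observation shows that no element of $\Gamma$ is a norm-accumulation point of $\Gamma\setminus\{\pm x_\lambda\}$ (using the fixed bound $\rho(\lambda)<1$ from condition (b)), and your Case~2 shows that no denting point outside $\Gamma$ can exist, by upgrading the weak-star approximation from condition (c) to norm approximation via small slices and then exploiting the uniform bound $r_e<1$. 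Since concavity makes every molecule a preserved extreme, hence denting, point, the two cases together give $\Mol{M}\subseteq\Gamma$ with $\Gamma$ norm-discrete up to sign, so $M$ has no accumulation points at all --- every point is isolated, not merely a dense set of them. I checked the delicate steps (the passage from $e\in\overline{A_e}^{\,w^*}$ to a norm-convergent sequence of distinct, eventually non-antipodal $x_{\mu_n}$, and the limit $|x^*_{\mu_n}(e)|\leq r_e$ against $|x^*_{\mu_n}(e)|\to 1$) and they are sound; the only cosmetic point is that condition (a) only gives $|x^*_\lambda(x_\lambda)|=1$, so your identity $x^*_{\lambda_n}(x_\lambda)=1+x^*_{\lambda_n}(x_\lambda-x_{\lambda_n})$ should carry a $\pm$ sign, which changes nothing since you only need $|x^*_{\lambda_n}(x_\lambda)|\to 1$.
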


\begin{proof}
Assume that $\mathcal F(M)$ has property quasi-$\alpha$ witnessed by the sets $\Gamma\subset S_{\mathcal F(M)}$, $\Gamma^*\subset S_{\Lip(M,\R)}$, and the function $\rho\colon \Gamma\longrightarrow \mathbb R$. Take $m_{x,y}\in \Gamma$ and let $g_{x,y}\in \Gamma^*$ be its associated functional. Then,
\begin{equation}\label{eq:qalpha}
\norm{m_{x,y}-m_{u,v}}\geq |g_{x,y}(m_{x,y}-m_{u,v})|\geq 1-\rho(m_{x,y})
\end{equation}
for every $m_{u,v}\in \Gamma$ with $m_{u,v}\neq m_{x,y}$. By Proposition~\ref{prop:qalphafreestrexp}, $$\Mol{M}= \preext{B_{\mathcal F(M)}}\subset \overline{\Gamma}$$ and so \eqref{eq:qalpha} holds also for every $m_{u,v}\in \Mol{M}\setminus \{m_{x,y}\}$. Thus, by Lemma \ref{lemma:ineqmolec}, we have that
\[ 2\frac{d(x,u)}{d(x,y)}\geq \norm{m_{x,y}-m_{u,y}} \geq 1-\rho(m_{x,y})\]
whenever $m_{x,y}\in \Gamma$ and $u\in M\setminus\{x,y\}$. In particular, the open ball centred at $x$ of radius $\frac{1-\rho(m_{x,y})}{2}d(x,y)$ is a singleton whenever $m_{x,y}\in \Gamma$. This means that the set
\[ A = \{x\in M\colon m_{x,y}\in \Gamma \text{ for some } y\in M\setminus\{x\} \}\]
is made up of isolated points. In order to prove that $A$ is dense in $M$, consider the Lipschitz function $f(t) = d(t,A)-d(0,A)$ for every $t\in M$, which belongs to $\Lip(M,\R)$, and consider its canonical linear extension $\hat{f}$ from $\mathcal F(M)$ to $\R$. Then, $\hat{f}$ vanishes on the norming set $\Gamma$, so $\hat{f}=0$. Thus, $f=0$, which yields that $\overline{A}=M$.
\end{proof}

\subsection{Relationship between the properties for Lipschitz-free spaces}\label{subsec:relations}
In the context of Lipschitz-free spaces over complete metric spaces, Figure~\ref{figure:Lipfree} contains the implications between the properties of the previous subsections.

\begin{figure}[h]
\centering
\begin{tikzpicture}[->,>=stealth',shorten >=1pt,auto,node distance=1.5cm,main node/.style={rectangle,
draw=black,
text width=6em,
            minimum height=2em,
text centered}]

\node[main node] (alpha) {Property $\alpha$};
\node[main node] (qalpha) at (4.65,-1.2) {Property quasi-$\alpha$};
\node[main node, text width=8em] (use) at (0,-6) {$\begin{array}{c} B_{\mathcal F(M)}=\cconv(S) \\  S \text{ unif.~str.~exp.}\end{array}$};
\node[main node, double, thick, text width=8em] (sna) at (4.65,-3.5) {$\SA(M, Y) $ dense for all $Y$};
\node[main node] (a) at (4.65, -5.2) {Property A};
\node[main node] (finite) at (9.3,0) {
$\begin{array}{c} \text{finite} \\  \text{dimensional}\end{array}$
};
\node[main node] (reflexive) at (9.3, -2.7) {Reflexive};
\node[main node] (rnp) at (9.3, -6) {RNP};

% implicaciones
\draw[-implies,double equal sign distance] (alpha) -- (qalpha) node[midway,swap,sloped]{(9)};
\draw[-implies,double equal sign distance] (alpha) -- (use) node[midway]{(5)};
\draw[implies-implies,double equal sign distance] (finite) -- (reflexive) node [midway]{(1)};
\draw[-implies,double equal sign distance,transform canvas={xshift=-0.4em}] (reflexive) -- (rnp);
\draw[-implies,double equal sign distance] (rnp) -- (sna) node[midway, sloped]{(4)};
\draw[-implies,double equal sign distance] (use) -- (sna) node[midway, sloped]{(6)};
\draw[-implies,double equal sign distance] (sna) -- (a) node[midway]{(8)};
\draw[-implies,double equal sign distance, transform canvas={yshift=-0.4em}] (finite) -- (alpha) node[midway]{(12)};
\draw[-implies,double equal sign distance] (qalpha) -- (sna) node[midway]{(10)};

% contraejemplos
\draw[-implies,double equal sign distance, neg] (use) -- (qalpha) node[pos=0.65, sloped]{(11)};
\draw[-implies,double equal sign distance, neg] (rnp) -- (qalpha) node[pos=0.65,sloped,swap]{(3)};
\draw[-implies,double equal sign distance, neg, transform canvas={xshift=0.4em}] (rnp) -- (reflexive) node[pos=0.4,swap]{(2)};
\draw[-implies,double equal sign distance, neg, transform canvas={yshift=0.4em}] (alpha) -- (finite) node[midway]{(13)};
\draw[-implies,double equal sign distance, neg] (rnp) -- (use) node[pos=0.55]{(7)};
\end{tikzpicture}
\caption{Relations between the sufficient conditions for property A in Lipschitz-free spaces}
     \label{figure:Lipfree}
\end{figure}
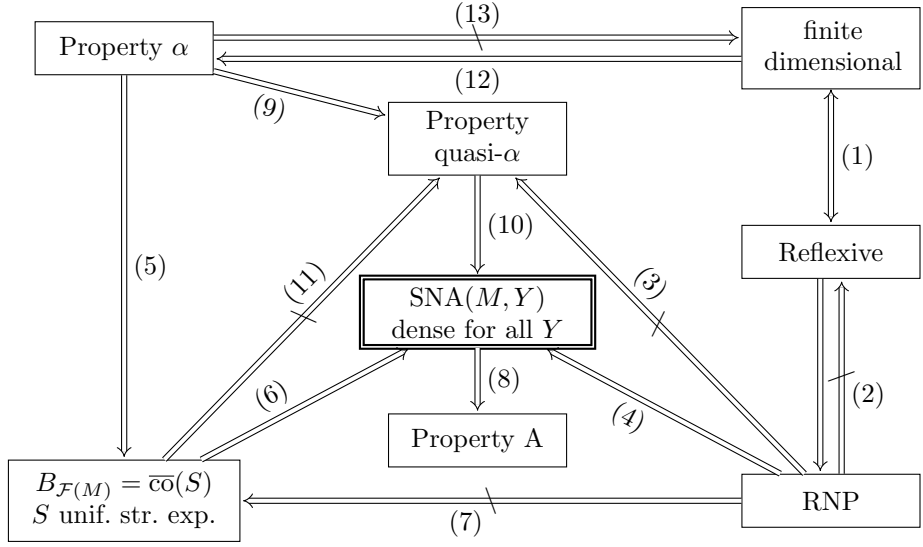

Let us discuss why the numbered implications and non-implications hold.

\noindent (1). It follows since every infinite-dimensional Lipschitz-free space contains an isomorphic copy of $\ell_1$ (this is folklore, but see \cite{cdw} where more is proved).

\noindent (2). $\mathcal F(\mathbb N)=\ell_1$.

\noindent (3). It follows from the following example.

\begin{example}\label{ex:RNPnotQalpha}
Let $M=([0,1],|\cdot|^\theta)$, where $0<\theta<1$. Then $\mathcal F(M)$ has the RNP (see Example \ref{ejernp}). Moreover, $M$ is concave  \cite[p.~51]{wea5}. By Proposition~\ref{qalpha+unifconc}, $\mathcal F(M)$ does not have property quasi-$\alpha$.
\end{example}

\noindent (4). It follows from Theorem \ref{teornpdensidad}, whose proof is based on the proof of Bourgain that asserts that RNP implies property A
\cite[Theorem~5]{bourgain1977}.

\noindent (5). It follows from the definition, introduced in \cite{Schachermayer}.

\noindent (6). It follows from Proposition \ref{prop:unifstrexpsnadens}, whose proof is based on \cite[Proposition~1]{lindens}, where it is proved that the existence of such a set $S$ implies property A.

\noindent (7). It follows from the following example.
\begin{example}\label{RNPnocufe} For every $n\in \N$, consider $M_n = \{0,x_n,y_n\}$, where $$d(0,x_n)=d(0,y_n)=1+1/n\qquad \text{and}\qquad d(x_n,y_n)=2$$ for each $n\in \mathbb N$, and let $M$ be its $\ell_1$-sum. Then, $\mathcal{F}(M)$ has the RNP, but $B_{\mathcal F(M)}$ is not the closed convex hull of any set of uniformly strongly exposed points.
\end{example}

\begin{proof}
First, $\mathcal{F}(M)$ has the RNP as it is the $\ell_1$-sum of finite-dimensional Banach spaces by Proposition \ref{prop:kaufmann}. Suppose that $B_{\mathcal F(M)} = \cconv(A)$. We claim that $m_{x_n,y_n}\in A\cup(-A)$ for every $n\in \mathbb N$. Indeed, assume that $m_{x_n,y_n}\notin A\cup(-A)$. Consider $f\colon M\longrightarrow \mathbb R$ given by $$f(0)=f(x_m)=f(y_m)=0\ \text{ if $m\neq n$},\ \ f(x_n)=-1,\ \text{ and } \ f(y_n)=1.$$ Clearly, $\norm{f}_L=1$. Moreover, we have that $$|\langle f, m_{x,y}\rangle | < (1+1/n)^{-1}\qquad \text{for every $m_{x,y}\in \Mol{M}\setminus\{m_{x_n,y_n}, m_{y_n,x_n}\}$.}$$ Thus, $A\cup(-A)$ is not norming, a contradiction.

Now, note that
\[ d(x_n,0)+d(y_n,0)-d(x_n,y_n) = \frac{2}{n}\]
goes to $0$ as $n$ goes to $\infty$, and so $A$ is not uniformly Gromov rotund.
\end{proof}

\noindent (8). It is obvious.

\noindent (9). It is obvious from the very definitions.

\noindent (10). It follows from Proposition \ref{prop:qalphaimplidensity}, whose proof is based on that of \cite[Proposition 2.10]{ChoiSong}, where it is proved that property quasi-$\alpha$ implies property A.

\noindent (11). It follows from the following example.

\begin{example}
		Let $M=([0,1],| \cdot |^\theta)$, where $0<\theta<1$. Then, $\Mol{M}$ is a set of uniformly strongly exposed points, $B_{\mathcal{F}(M)}=\cconv(\Mol{M})$, but $\mathcal F(M)$ fails to have property quasi-$\alpha$.
	\end{example}

	\begin{proof}
		In view of Proposition~\ref{qalpha+unifconc}, we just have to show that $\Mol{M}$ is uniformly Gromov rotund. To this end, define  the map $f \colon (0,1)\longrightarrow \mathbb{R}$ given by
		\[f(\lambda)=\frac{(1-\lambda)^\theta + \lambda^\theta -1}{\min\{(1-\lambda)^\theta, \lambda^\theta\}} \]
		It is easy to see that $0<\varepsilon:=\inf\{ f(\lambda) \colon \lambda \in (0,1)\}\leq 1$. Take different points $x,y,z \in [0,1]$ such that $x<y$. First, if we assume that $z < x$, then
		\[ \frac{(x,y)_z}{\min\{|x-z|^\theta, |y-z|^\theta\}} = \frac{|x-z|^\theta+|z-y|^\theta-|x-y|^\theta}{|x-z|^\theta}\geq \frac{|x-z|^\theta}{|x-z|^\theta}=1,\]
		and the same happens in the case of $y<z$. On the other hand, if $z=\lambda x + (1-\lambda)y$ for some $\lambda \in (0,1)$, then
\begin{align*}
\frac{(x,y)_z}{\min\{|x-z|^\theta, |y-z|^\theta\}} & = \frac{|x-y|^\theta(1-\lambda)^\theta + |x-y|^\theta \lambda^\theta -|x-y|^\theta}{|x-y|^\theta \min \{(1-\lambda)^\theta, \lambda^\theta\}} \\ &=\frac{(1-\lambda)^\theta+\lambda^\theta -1}{\min\{(1-\lambda)^\theta, \lambda^\theta\}}\geq \varepsilon.\qedhere
\end{align*}
\end{proof}

\noindent (12). See Example~\ref{ex:spacesAlpha}.

\noindent (13). $\mathcal F(\mathbb N)=\ell_1$ has property $\alpha$.

All the reverse implications not considered in our diagram (or which do not obviously follows from the ones given in it) are not known in the context of Lipschitz-free spaces. Particularly interesting are the cases of whether the converse of (4) and (8) holds.

\section{Weak density of $\SA(M,\R)$}\label{sectidensidadebil}

We have seen in the previous sections that the fact that $\SA(M,\R)$ is norm-dense in $\Lip(M,\R)$ imposes severe restrictions on the metric space $M$ (c.f.\ e.g.\ Theorem \ref{teo:length} or Corollary \ref{caraR-tree}), and even more the known sufficient conditions to get such density of Section \ref{sec:property_A}. However, that is not the case if we replace norm density with weak density, as the following theorem shows.

\begin{theorem}\label{th:weakdenseSA} Let $M$ be a complete pointed metric space. Then, $\SA(M,\R)$ is weakly sequentially dense in $\Lip(M,\R)$. Moreover, for every $g\in \Lip(M,\R)$ there is a sequence $\{g_n\}\subset\SA(M,\R)$ such that $g_n\stackrel{w}{\longrightarrow}g$, $\norm{g_n}_L\to\norm{g}_L$, and $g_n\to g$ uniformly on bounded sets.
\end{theorem}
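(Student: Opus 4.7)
I may normalize so that $\|g\|_L = 1$ and reduce to the case $g \notin \SA(M,\R)$, since otherwise $g_n = g$ works. My strategy is to produce $g_n \in \SA(M,\R)$ via a pointwise minimum construction and to arrange the modifications so that Lemma~\ref{lemmaweaknull} yields the weak convergence.

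For each $n$ I would pick a near-attaining pair $(x_n, y_n)$ with, say, $g(x_n) > g(y_n)$ and slope $s_n := (g(x_n) - g(y_n))/d(x_n,y_n) > 1 - 1/n$; setting $\delta_n := (1 - s_n)d(x_n, y_n) \in (0,\, d(x_n,y_n)/n]$, I define
\[
 g_n(t) := \min\bigl\{ g(t),\ g(y_n) - \delta_n + d(t, y_n) \bigr\}.
\]
Being the minimum of two $1$-Lipschitz functions, $g_n$ is $1$-Lipschitz. A direct check gives $g_n(x_n) = g(x_n)$ and $g_n(y_n) = g(y_n) - \delta_n$, so the slope at $(x_n, y_n)$ equals $s_n + (1-s_n) = 1$. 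Hence $\|g_n\|_L = 1 = \|g\|_L$ and $g_n \in \SA(M,\R)$ with $(x_n, y_n)$ as an attaining pair; moreover $0 \leq g - g_n \leq \delta_n$ pointwise. The identity $\|g_n\|_L = \|g\|_L$ takes care of the norm-convergence statement, and if the pairs are selected so that $d(x_n,y_n)$ stays controlled on each bounded part of $M$ then $\delta_n \to 0$ gives $g_n \to g$ uniformly on bounded subsets of $M$.

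For weak convergence I would apply Lemma~\ref{lemmaweaknull} to the sequence $\{g - g_n\}$, which is bounded in $\Lip(M,\R)$ by $2$. To do so I need the supports
\[
 U_n := \bigl\{ t \in M : g(t) > g(y_n) - \delta_n + d(t, y_n) \bigr\} = \operatorname{supp}(g - g_n)
\]
to be pairwise disjoint. The main obstacle is arranging this inductively. Since $g \notin \SA(M,\R)$, the Lipschitz slack $\rho_{y}(t) := d(t,y) - (g(t) - g(y))$ is strictly positive on $M \setminus \{y\}$ for every fixed $y$, which permits truncating the $\min$-construction so that $U_n \subseteq B(y_n, R_n)$ for suitable $R_n > 0$ while keeping $g_n$ $1$-Lipschitz and $(x_n, y_n)$ still as an attaining pair. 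My plan is then a case-by-case selection: if the near-attaining pairs of $g$ accumulate at a single point of $M$, I place the successive $y_n$ in pairwise disjoint metric shells around that accumulation point (so that the corresponding balls $B(y_n, R_n)$, with $R_n \to 0$, are automatically pairwise disjoint); if no such accumulation occurs the $y_n$ can be extracted to be sufficiently spread out that the balls are again pairwise disjoint. With the supports $U_n$ disjoint and $\{g - g_n\}$ bounded in $\Lip(M,\R)$, Lemma~\ref{lemmaweaknull} delivers $g - g_n \to 0$ weakly, which completes the proof.
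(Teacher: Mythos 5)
Your single-step construction is sound: $g_n=\min\{g,\,g(y_n)-\delta_n+d(\cdot,y_n)\}$ is $1$-Lipschitz, attains its norm at $(x_n,y_n)$, and satisfies $0\le g-g_n\le\delta_n$ (after translating so that $g_n(0)=0$). The genuine gap is exactly the step you flag as ``the main obstacle'': making the supports $U_n$ pairwise disjoint. First, the truncation argument is vacuous. If $g_n$ is $1$-Lipschitz, equals $g$ off $B(y_n,R_n)$ and satisfies $g_n(y_n)=g(y_n)-\delta_n$, then for every $t\notin B(y_n,R_n)$ one gets $d(t,y_n)\ge g_n(t)-g_n(y_n)=g(t)-g(y_n)+\delta_n$, i.e.\ the slack $\rho_{y_n}(t)\ge\delta_n$; so such a truncation exists \emph{only if} $U_n\subseteq B(y_n,R_n)$ held already. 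Pointwise positivity of the slack gives no control on the sublevel set $\{\rho_{y_n}<\delta_n\}$, which can contain a fixed point for \emph{every} admissible choice of pair. Concretely, let $M=\{0\}\cup\{z_m\colon m\ge 1\}$ with $d(0,z_m)=1$, $d(z_m,z_{m'})=2$, and $g(z_m)=1-1/m$: every pair of slope $>1/2$ has the form $(z_m,0)$, and your sets are $U_n=\{0\}\cup\{z_m\colon m>k_n\}$ — all containing $0$, nested and infinite, so no subsequence has disjoint supports. (Here one must instead \emph{raise} $g$ at $x_n$ rather than lower it at $y_n$; your recipe does not say which endpoint to perturb, nor why some choice always succeeds.) The dichotomy ``pairs accumulate at one point / pairs are spread out'' is not exhaustive and does not decide this.

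It is worth seeing how the paper sidesteps the issue: in Lemma~\ref{lemma:sufcond} the perturbation is decoupled from the near-attaining pairs of $g$ — the disjoint balls $B(x_n,r_n)$ are chosen in advance from the structure of $M$ (clusters of $M'$, or Lemma~\ref{lemadiscre}), and the bump supported in $B(x_n,r_n)$ creates a pair of slope $1+2d(x_n,y_n)/r_n>\norm{g}_L$ at which the (slightly enlarged) norm is attained, so disjointness of supports is automatic. In the cases where no such ball system exists ($M$ uniformly discrete, $M$ compact countable, and Case~2 of the analysis when $M'$ is finite) the paper proves \emph{norm} density instead, via an $\ell_1$-decomposition, the RNP of $\mathcal F(M)$ and Bourgain's theorem; your scheme would have to reprove the theorem in those cases as well, and nothing in the sketch indicates how. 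So the proposal is a genuinely different route, but it is incomplete precisely at its central step.
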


This result extends \cite[Theorem 2.6]{kms}, where it was proved under the assumption of $M$ being a local metric space (equivalently, $M$ being a length space).

In order to prove our result we need a pair of lemmata. To begin with, the following result is implicitly proved in \cite[Theorem 2.6]{kms} under the assumption of $M$ being a length space, but thanks to Lemma \ref{lemmaweaknull}, we can show that the same argument works in a much more general setting.

\begin{lemma}\label{lemma:sufcond}
Let $M$ be a pointed metric space. Assume that there exists a sequence $\{B(x_n,r_n)\}_{n\in \N}$ of disjoint balls of $M$ and a sequence $\{y_n\}_{n\in \N}$ of points of $M$  such that $0< d(x_n,y_n)/r_n \to 0$ and $r_n\to 0$. Then for every $g\in \Lip(M,\R)$ there is a sequence $\{g_n\}\subset\SA(M,\R)$ such that $g_n\stackrel{w}{\longrightarrow}g$ and $\norm{g_n}_L\to\norm{g}_L$ and $g_n\to g$ uniformly.
\end{lemma}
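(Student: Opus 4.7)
The plan is to construct $g_n$ by leaving $g$ unchanged outside $B(x_n,r_n)$ and re-defining it on that ball via McShane's extension theorem so that its Lipschitz norm is strongly attained at the pair $(x_n,y_n)$. Since the differences $g_n-g$ will have pairwise disjoint supports lying in balls of shrinking radii, the three convergences (uniform, of norms, weak) will follow almost automatically, the weak one from Lemma~\ref{lemmaweaknull}.

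Write $L:=\|g\|_L$; the case $L=0$ is trivial (take $g_n=g$). For each $n$ large enough so that $d(x_n,y_n)<r_n/2$ and $0\notin B(x_n,r_n)$ (the latter excludes at most one index because the balls are pairwise disjoint), set
\[
\alpha_n:=\frac{g(x_n)-g(y_n)}{d(x_n,y_n)}\in[-L,L],\qquad \epsilon_n:=\frac{5L\,d(x_n,y_n)}{r_n},\qquad s_n:=\frac{(L-\alpha_n+\epsilon_n)\,d(x_n,y_n)}{2}>0.
\]
Since $d(x_n,y_n)/r_n\to 0$, we have $\epsilon_n\to 0$ and $s_n\leq (L+\epsilon_n/2)\,d(x_n,y_n)\to 0$. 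Define $F_n$ on the partial domain $D_n:=(M\setminus B(x_n,r_n))\cup\{x_n,y_n\}$ by $F_n(t):=g(t)$ for $t\in M\setminus B(x_n,r_n)$, $F_n(x_n):=g(x_n)+s_n$, and $F_n(y_n):=g(y_n)-s_n$. A case analysis on pairs of points of $D_n$ shows that on pairs in $M\setminus B(x_n,r_n)$ the ratio is at most $L$; at $(x_n,y_n)$ it equals $\alpha_n+2s_n/d(x_n,y_n)=L+\epsilon_n$; and for mixed pairs $(x_n,v)$ or $(y_n,v)$ with $v\in M\setminus B(x_n,r_n)$ the ratio is bounded above by $L+s_n/(r_n-d(x_n,y_n))\leq L+\epsilon_n$, by the choice of $\epsilon_n$. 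Hence $\|F_n\|_L=L+\epsilon_n$, attained on $D_n$ at the pair $(x_n,y_n)$. I then invoke McShane's theorem to extend $F_n$ to $g_n\in\Lip(M,\R)$ with $\|g_n\|_L=L+\epsilon_n$; as $g_n|_{D_n}=F_n$, one has $(g_n(x_n)-g_n(y_n))/d(x_n,y_n)=L+\epsilon_n=\|g_n\|_L$, so $g_n\in\SA(M,\R)$.

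It remains to check the three convergences. First, $\|g_n\|_L=L+\epsilon_n\to L=\|g\|_L$. Second, for uniform convergence: $g_n\equiv g$ off $B(x_n,r_n)$, while for $t\in B(x_n,r_n)$ the triangle inequality against $x_n$ gives
\[
|g_n(t)-g(t)|\leq (L+\epsilon_n)\,d(t,x_n)+s_n+L\,d(t,x_n)\leq (2L+\epsilon_n)\,r_n+s_n\longrightarrow 0.
\]
Third, for weak convergence: the supports of the functions $g_n-g$ lie in the pairwise disjoint balls $B(x_n,r_n)$, each $g_n-g$ belongs to $\Lip(M,\R)$ (after discarding the at most one index with $0\in B(x_n,r_n)$), and $\|g_n-g\|_L\leq \|g_n\|_L+\|g\|_L\leq 2L+\epsilon_n$ is uniformly bounded; Lemma~\ref{lemmaweaknull} applied to $\{(g_n-g)/(2L+1)\}$ yields $g_n-g\stackrel{w}{\longrightarrow}0$.

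The one delicate point is the simultaneous choice of $s_n$ and $\epsilon_n$. The perturbation $s_n$ must be large enough to push the ratio at $(x_n,y_n)$ up from $\alpha_n$ (which can be as low as $-L$) to $L+\epsilon_n$, yet the ``boundary effect'' $s_n/r_n$ coming from pairs that mix $\{x_n,y_n\}$ with points outside the ball must not exceed $\epsilon_n$; otherwise some such pair would beat $(x_n,y_n)$ and spoil strong attainment there. The hypothesis $d(x_n,y_n)/r_n\to 0$ is precisely what makes this balancing possible: $s_n=O(d(x_n,y_n))$, so an $\epsilon_n$ of order $d(x_n,y_n)/r_n$ both tends to $0$ and dominates $s_n/r_n$.
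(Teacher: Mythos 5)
Your proof is correct and follows essentially the same strategy as the paper's: perturb $g$ only inside $B(x_n,r_n)$ so that the pair $(x_n,y_n)$ attains the (slightly inflated) Lipschitz norm, extend by McShane, and deduce weak convergence from the disjointness of the supports via Lemma~\ref{lemmaweaknull}. The only difference is that the paper outsources the local perturbation to the proof of Theorem~2.6 in \cite{kms} (fixing $g_n(y_n)=g(y_n)$ and getting constant $1+2d(x_n,y_n)/r_n$), whereas you carry it out explicitly with a symmetric perturbation at both endpoints; the balancing of $s_n$ against $\epsilon_n$ that you highlight is exactly the point of the hypothesis $d(x_n,y_n)/r_n\to 0$ and is verified correctly.
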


\begin{proof} Given $g\in S_{\Lip(M,\R)}$, just follow the proof of Theorem 2.6 in \cite{kms} to construct a sequence $\{g_n\}$ in $\SA(M,\R)$ with $\supp(g_n-g)\subset B(x_n,r_n)$, $g_n(y_n)=g(y_n)$ and $\norm{g_n}_L =1+2\frac{d(x_n,y_n)}{r_n}\to 1$. Then, $\{g_n\}\stackrel{w}{\longrightarrow} g$ by Lemma \ref{lemmaweaknull}. Moreover,
	\begin{align*} |g_n(x)-g(x)| &\leq |g_n(x)-g_n(y_n)|+|g(y_n)-g(x)|
	\leq (\norm{g_n}_L+\norm{g}_L) d(y_n, x)\\
	&\leq (2+2\frac{d(x_n,y_n)}{r_n})(r_n+d(x_n,y_n))
	\end{align*}
whenever $x\in B(x_n,r_n)$. Since $r_n\to0$, $d(x_n,y_n)/r_n\to0$ and $\supp(g_n-g)\subset B(x_n,r_n)$, it follows that $g_n\to g$ uniformly. 		
\end{proof}

The following technical lemma will allow us to apply Lemma \ref{lemma:sufcond} in the case of $M$ being discrete but not uniformly discrete.

\begin{lemma}\label{lemadiscre}
Let $M$ be a complete metric space. Assume that $M$ is discrete but not uniformly discrete. Then, for every $k\geq 2$ and every $\varepsilon>0$, there exist $x, y\in M$ such that $0<d(x,y)\leq\varepsilon$ and the set $M\setminus B(x,k\,d(x,y))$ is not uniformly discrete.
\end{lemma}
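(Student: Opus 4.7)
The plan is to argue by contradiction. Suppose there exist $k\geq 2$ and $\varepsilon>0$ such that for \emph{every} pair $(x,y)\in M\times M$ with $0<d(x,y)\leq\varepsilon$, the complement $M\setminus B(x,k\,d(x,y))$ is uniformly discrete; denote its separation constant by $\eta(x,y)>0$ (with the convention $\eta(x,y)=+\infty$ when this complement has at most one point). The strategy is to use this hypothesis to build a sequence of pairs $(x_n,y_n)$ whose first coordinates form a Cauchy sequence, and then exploit the rigidity of Cauchy sequences in a complete discrete metric space.

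Construct $(x_n,y_n)$ inductively. Pick $(x_1,y_1)$ with $0<r_1:=d(x_1,y_1)\leq \varepsilon/2$, which exists because $M$ is not uniformly discrete. Given $(x_n,y_n)$ with $r_n\leq \varepsilon/2^n$, use non-uniform discreteness again to choose distinct $p,q\in M$ with
\[
0<d(p,q)<\min\bigl\{\eta(x_n,y_n),\;\varepsilon/2^{n+1}\bigr\}.
\]
Since $d(p,q)<\eta(x_n,y_n)$, the two points cannot both lie in $M\setminus B(x_n,k r_n)$; so, relabelling if necessary, assume $p\in B(x_n,k r_n)$ and set $x_{n+1}:=p$, $y_{n+1}:=q$. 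By construction $r_{n+1}\leq \varepsilon/2^{n+1}\leq \varepsilon$ and
\[
d(x_n,x_{n+1})\leq k r_n\leq k\varepsilon/2^n.
\]

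Telescoping the geometric estimate, $\{x_n\}$ is Cauchy, so completeness of $M$ gives $x_n\to x$ for some $x\in M$, and discreteness of $M$ (every point is isolated) forces $x_n=x$ for all sufficiently large $n$. But then $d(x,y_n)\leq d(x,x_n)+r_n\to 0$, so $y_n\to x$, and again by discreteness $y_n=x$ eventually — contradicting $y_n\neq x_n$. This contradiction proves the lemma.

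The only delicate point is the freedom to relabel $p\leftrightarrow q$ at each inductive step: the pair $\{p,q\}$ arises anew from the non-uniform discreteness of $M$ with no prior commitment about which endpoint plays which role, so designating the one that lies inside $B(x_n,k r_n)$ as $x_{n+1}$ is always legitimate, which is exactly what sustains the geometric-series control of $d(x_n,x_{n+1})$.
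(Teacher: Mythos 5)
Your proof is correct and follows essentially the same route as the paper's: assume the conclusion fails for some $k$ and $\varepsilon$, use the failure of uniform discreteness of $M$ to build pairs $(x_{n+1},y_{n+1})$ at distance below the separation constant of $M\setminus B(x_n,k\,d(x_n,y_n))$, deduce that $\{x_n\}$ is Cauchy, and contradict discreteness via the common limit. The only cosmetic difference is that you relabel so that $x_{n+1}$ itself lies in $B(x_n,k\,d(x_n,y_n))$, whereas the paper keeps the pair as given and simply enlarges the radius by $d(x_{n+1},y_{n+1})$; both yield the same summable bound on $d(x_n,x_{n+1})$.
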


\begin{proof}
Assume that there exist $k\geq 2$ and $\varepsilon>0$ such that
\[ \alpha(x,y) := \inf\{d(u,v)\colon u,v\in M\setminus B(x,kd(x,y)),\, u\neq v\}>0\]
whenever $0<d(x,y)\leq \varepsilon$. Since $M$ is not uniformly discrete, one can construct inductively two sequences $\{x_n\}$ and $\{y_n\}$ in $M$ such that $0<d(x_1,y_1)\leq \varepsilon$ and $0<d(x_{n+1},y_{n+1})\leq \min\{\alpha(x_n,y_n), 2^{-n-1}\varepsilon\}$ for every $n\in \mathbb N$. It follows that either $x_{n+1}\in B(x_n, kd(x_n, y_n))$ or $y_{n+1}\in B(x_n, kd(x_n,y_n))$. In any case,
	\[x_{n+1} \subset  B(x_n, kd(x_n,y_n)+d(x_{n+1},y_{n+1}))\subset B(x_n, 2^{-n}\varepsilon (k+1/2)).\]
 Thus, $\{x_n\}$ is Cauchy and so it has a limit in $M$, say $x$. Moreover, it is clear that $\{y_n\}$ also converges to $x$. Since $M$ is discrete, we conclude the existence of $n\in \mathbb N$ such that $x_n=y_n$, a contradiction.
\end{proof}

\begin{proof}[Proof of Theorem \ref{th:weakdenseSA}] We distinguish several cases depending on the properties of the set of cluster points $M'$. If $M'$ is infinite, then Lemma~\ref{lemma:sufcond} applies and so $\SA(M,\R)$ is weakly sequentially dense. Indeed, in such case it is not difficult to construct an infinite sequence of disjoint balls centered at (different) cluster points; as the centers are cluster points, we may also get the sequence $\{y_n\}_{n\in \N}$.

If $M'$ is empty, then we distinguish two more cases:
\begin{itemize}
\item If $M$ is uniformly discrete, then $\mathcal F(M)$ has the RNP (see  Example \ref{ejernp}), and so $\SA(M,\R)$ is indeed norm-dense in $\Lip(M,\R)$ by Theorem \ref{teornpdensidad}. Note that if $\norm{g_n-g}_L\to 0$ then $g_n\to g$ uniformly on bounded sets.
\item If $M$ is discrete but not uniformly discrete, then we can inductively apply Lemma~\ref{lemadiscre} to find sequences $\{x_n\}$, $\{y_n\}$ in $M$ such that, for every $n\in \mathbb N$, the space $M\setminus \bigcup_{m=1}^n B(x_m, 2md(x_m,y_m))$ is discrete but not uniformly discrete,  $x_{n+1},y_{n+1}\in M\setminus \bigcup_{m=1}^n B(x_m, 2md(x_m,y_m))$ and $$d(x_{n+1},y_{n+1})\leq\min\left\{\frac{n}{n+1}d(x_n, y_n), n^{-2}\right\}.$$ It is easy to check that the balls $\{B(x_n, nd(x_n,y_n))\}$ are pairwise disjoint and satisfy the requirement of Lemma~\ref{lemma:sufcond}. The conclusion follows.
\end{itemize}

It remains to consider the case when $M'$ is non-empty and finite, say $M'=\{a_1,\ldots, a_k\}$. Moreover, we may assume that $a_1=0$. Given $\varepsilon>0$, we denote $E_\varepsilon :=\bigcup_{i=1}^k \bigl[M\setminus B(a_i,\varepsilon)\bigr]$. If $E_\varepsilon$ is finite for every $\varepsilon>0$, then $M$ is compact and countable. Then, $\mathcal F(M)$ has the RNP (see Example \ref{ejernp}) and the conclusion follows. Thus, we may and do assume that there is $0<\varepsilon_0<\frac{1}{4}\min_{i\neq j}\{d(a_i,a_j)\}$ such that $E_{\varepsilon_0}$ is infinite. Moreover, note that $E_\varepsilon$ is discrete for every $\varepsilon>0$. If there is $0<\varepsilon\leq \varepsilon_0$ such that $E_\varepsilon$ is not uniformly discrete in $M$, then the same argument as above provides a sequence of disjoint balls such that Lemma~\ref{lemma:sufcond} applies. Thus, we may also suppose that $E_\varepsilon$ is infinite and uniformly discrete in $M$ for every $0<\varepsilon\leq \varepsilon_0$. By rescaling the metric space, we may assume that $\varepsilon_0=2^{-1}$. For $n\in \N$ and $i\in\{1,\ldots, k\}$, let us denote $C_n^i := E_{(n+1)^{-1}}\cap B(a_i, n^{-1})$ and
\[ \alpha_n^i:= \inf\{d(x,M\setminus\{x\})\colon x\in C_n^i\},\]
with the convention that $\inf\emptyset=+\infty$. Note, by passing, that $$M= E_{2^{-1}} \cup \bigcup_{n,k} C_n^i.$$
Now, we distinguish two cases.

\emph{Case 1}: assume that there is $i\in \{1,\ldots, k\}$ such that $\liminf_{n\to\infty} n\alpha_n^i=0$. Then we claim that it is possible to find a sequence $\{j_n\}$ in $\mathbb N$, and sequences $\{x_n\}$ and $\{y_n\}$ in $M$, such that:
\begin{enumerate}
\item $3nd(x_n, y_n)< (j_n+1)^{-1}$ for every $n$;
\item $4j_{n+1}^{-1} < (j_n+1)^{-1} -3nd(x_n,y_n)$ for every $n$;
\item $x_n\in C_{j_{n}}^i$.
\end{enumerate}
Indeed, take $j_1\geq 1$ such that
$6j_1\alpha_{j_1}^i< 1$. Then there is $x_1\in C_{j_1}^i$ such that $$3d(x_1, M\setminus \{x_1\}) < 2^{-1}j_1^{-1}\leq (j_1+1)^{-1}.$$ Thus, there is $y_1\in M$ with $3d(x_1,y_1)<(j_1+1)^{-1}$. Now, assume that we have defined $x_{n}$, $y_{n}$ and $j_n$, and let us define $x_{n+1}$, $y_{n+1}$ and $j_{n+1}$. By condition (1), we can take $j_{n+1}\in \mathbb N$ such that $$4j_{n+1}^{-1}<(j_n+1)^{-1} -3nd(x_n,y_n) \quad \text{ and } \quad 6nj_{n+1}\alpha_{j_{n+1}}^i < 1.$$ Then, there are $x_{n+1}\in C_{j_{n+1}}^i$ and $y_{n+1}\in M$ such that
\[ 3nd(x_{n+1}, y_{n+1})< 2^{-1}j_{n+1}^{-1}\leq (j_{n+1}+1)^{-1}.\]
This completes the construction of the sequences $\{x_n\}$, $\{y_n\}$ and $\{j_n\}$. Now, we claim that $$B(x_n, 3nd(x_n,y_n))\cap B(x_m, 3md(x_m, y_m)) = \emptyset$$ whenever $n\neq m$. Indeed, assume that $n<m$. It follows from (1) and (2) that
\[ B(x_n, 3nd(x_n,y_n))\cap B(a_i, j_{n+1}^{-1}) = \emptyset. \]
Moreover, from (1) and (3) it follows that
\begin{align*}
B(x_m, 3md(x_m, y_m)) &\subset B(x_m, 3(j_m+1)^{-1}) \\ &\subset B(a_i, 3(j_m+1)^{-1} + j_m^{-1}) \subset B(a_i, 4j_m^{-1}).
\end{align*}
Finally, note that $4j_{m}^{-1}\leq 4j_{n+1}^{-1} < (j_n+1)^{-1}$. Thus $B(x_m, 3md(x_m, y_m))$ is contained in $B(a_i, 4j_{n+1}^{-1})$ and so, it does not intersect $B(x_n, 3nd(x_n,y_n))$. Therefore, we can apply Lemma \ref{lemma:sufcond} to get that $\SA(M,\R)$ is weakly sequentially dense. This completes the proof in the first case.

\emph{Case 2}: assume now that there is a constant $C>0$ such that $C\leq n\alpha_n^i$ for every $n\in \mathbb N$ and $i\in\{1,\ldots, k\}$. We will show that in this case $\mathcal F(M)$ has the RNP. To this end, we will apply Proposition \ref{prop:kaufmann} several times in order to decompose $\mathcal F(M)$ as an $\ell_1$-sum of spaces with the RNP. Let us denote $E = E_{2^{-1}} \cup \{0\}$ and $N = \bigcup_{i=1}^k B(a_i, 1/2)$. We claim that $\mathcal F(M)$ is isomorphic to $\mathcal F(E) \oplus_1 \mathcal F(N)$.
Note that $N$ is bounded and so, $R=\sup\{d(x,0)\colon x\in N\}<+\infty$. Moreover, note also that $E$ is uniformly discrete in $M$ and so, $$\alpha:=\inf\{d(x,y)\colon x\in E, y\in N, x\neq y\}>0.$$ Thus, given $x\in E$ and $y\in N$, we have that
\[ d(x,0)+d(y,0) \leq d(x,y)+2d(y,0)\leq \left(1+2\frac{R}{\alpha}\right)d(x,y).\]
By applying Proposition \ref{prop:kaufmann}, we get the claim.

Now, for $i\in\{1,\ldots,k\}$, denote $\tilde{C}_0^i=\{0,a_i\}$, $\tilde{C}_n^i:= C_n^i\cup\{a_i\}$ if $n\geq 1$
and $\tilde{C}^i:=\bigcup_{n=0}^\infty \tilde{C}_n^i$. Note that $N= \bigcup_{i=1}^k \tilde{C}^i$ and $\tilde{C}^i\cap \tilde{C}^j=\{0\}$ if $i\neq j$. We claim that there is a constant $L>0$ such that
\[ d(x,0)+d(y,0)\leq L\, d(x,y) \]
whenever $x\in \tilde{C}^i$ and $y\in \tilde{C}^j$ with $i\neq j$. Take such an $x$ and $y$. Note that
\[\begin{split} d(x,y)&\geq d(a_i, a_j)-d(x,a_i)-d(y,a_j) \geq \frac{\min_{i\neq j}d(a_i,a_j)}{2}\\
& \geq \frac{\min_{i\neq j}d(a_i,a_j)}{4\diam(N)}(d(x,0)+d(y,0)).
\end{split}\]
Therefore, $L = \frac{4\diam(N)}{\min_{i\neq j}d(a_i,a_j)}$ does the work. This shows that
\[ \mathcal F(M) \approx \mathcal F(E)\oplus_1\mathcal F(N) \approx \mathcal F(E) \oplus_1 \mathcal F(\tilde{C}^1) \oplus_1 \cdots \oplus_1\mathcal F(\tilde{C}^k). \]
Finally, we will show that $\mathcal F(\tilde{C}^i)\approx \left[\bigoplus_{n=0}^\infty \mathcal F(\tilde{C}_{n}^i)\right]_{\ell_1}$ for every $i\in \{1,\ldots, k\}$. To this end, consider $a_i$ as the distinguished point in $\tilde{C}^i$ and notice that $\tilde{C}_n^i\cap \tilde{C}_m^i = \{a_i\}$ if $n\neq m$. Fix $n, m\in \mathbb N\cup\{0\}$ with $n<m$, take $x\in \tilde{C}_n^i$ and $y\in \tilde{C}_m^i$ with $x\neq y$. Then
\[
d(y,a_i) \leq \frac{1}{m} \leq \frac{\alpha_m^i}{C} \leq \frac{d(x,y)}{C}
\]
by definition of $\alpha_m^i$, and so
\[ d(x, a_i) + d(y,a_i) \leq d(x,y) + 2d(y,a_i) \leq (1+2C^{-1})d(x,y).\]
Thus, we can apply Proposition \ref{prop:kaufmann} to get that $\mathcal F(\tilde{C}^i)\approx \left[\bigoplus_{n=0}^\infty \mathcal F(\tilde{C}_{n}^i)\right]_{\ell_1}$. Therefore,
\[ \mathcal F(M) \approx \mathcal F(E) \oplus_1 \left[\bigoplus_{n,i} \mathcal F(\tilde{C}_{n}^i)\right]_{\ell_1} \]
where each one of the summands has the RNP as they are the Lipschitz-free space over a uniformly discrete metric space (see Example \ref{ejernp}).
\end{proof}

Let us finish the section with some observations.

\begin{remark}{\slshape
It follows from Theorem \ref{th:weakdenseSA} that the linear span of $\SA(M,\R)$ is norm dense in $\Lip(M,\R)$ for every metric space $M$. When $\mathcal{F}(M)$ has the RNP, one actually has that } $$\SA(M,\R)-\SA(M,\R)=\Lip(M,\R).$$
Indeed, it follows from \cite[Theorem 8]{bourgain1977} that in such a case $\SA(M,\R)$ contain a dense $G_\delta$-subset of $\Lip(M,\R)$, so $\SA(M,\R)$ is residual. It then follows that  $\SA(M,\R)-\SA(M,\R)=\Lip(M,\R)$ from Baire Category Theorem (indeed, an easy argument is given in \cite[Proposition 5.5]{KLMW2018}: just observe that for every $f\in \Lip(M,\R)$, $[f+\SA(M,\R)]\cap \SA(M,\R)$ is not empty since, otherwise, the second category set $f+\SA(M,\R)$ would be contained in the first category set $\Lip(M,\R)\setminus \SA(M,\R)$, which is impossible).
\end{remark}

Next, we observe that viewing $\Lip(M,\R)\equiv \mathcal{L}(\mathcal{F}(M),\R)\equiv \mathcal{F}(M)^*$, the Bishop-Phelps theorem gives that the set of those elements in $\Lip(M,\R)$ which attain their norm \emph{as elements of the dual of $\mathcal{F}(M)$} is always norm dense. On the other hand, $\SA(M,\R)$ is the set of elements in $\mathcal{F}(M)^*$ which attain their norm at a molecule. As the unit ball of $\mathcal{F}(M)$ is the closed convex hull of $\Mol{M}$, one may wonder whether Theorem \ref{th:weakdenseSA} actually follows from these facts, that is, if whenever a subset $A$ of a Banach space $X$ satisfies that $B_X=\overline{\co}(A)$, then the set of elements of $X^*$ which attain their norms at a point of $A$ is weakly dense on $X^*$. This is not true in general, as the following example shows.

\begin{example}
Let $X=c_0 \pten Y$ be the projective tensor product of $c_0$ and $Y$, where $Y$ is an equivalent renorming of $\ell_1$ such that $Y^*$ is strictly convex (see e.g.\ \cite[Theorem~II.2.6]{dgz}). We consider the subset of $B_X$ given by  $$A:=\{x\otimes y\colon  x\in B_{c_0},\, y\in B_{Y}\}$$ which satisfies that $B_X = \cconv(A)$ (see e.g.\ \cite[Proposition~2.2]{ryan}). Next, observe that if an element of $X^* \equiv \mathcal L(c_0, Y^{*})$ attains its norm at a point of $A$ then, in particular, it attains its norm as an operator from $c_0$ to $Y^*$ (actually more), that is, the set of elements of $X^*$ attaining their norms at points of $A$ is contained in $\NA(c_0,Y^*)$. But this set is not weakly dense since it is contained in the space of compact operators $\mathcal K(c_0, Y^{*})$ by \cite[Proposition 4]{lindens} and there are non-compact operators from $c_0$ to $Y^*$.
\end{example}

\section{Octahedrality of the bidual norm of Lipschitz-free spaces}\label{sectiocta}

As we have pointed out in the Introduction, it is proved in \cite[Theorem 2.4]{blr} that $\mathcal F(M)$ has an octahedral norm whenever $M$ is not uniformly discrete and bounded. The idea of the proof in \cite{blr} is to show that, under this hypothesis, every convex combination of weak-star slices of the unit ball of $\Lip(M,\R)$ has diameter two, and then use \cite[Theorem 2.1]{blroctajfa} to get the octahedrality of the predual $\mathcal{F}(M)$ of $\Lip(M,\R)$. The proof strongly relies on the fact that on bounded subsets of $\Lip(M,\R)$ there is a good characterisation of the weak-star convergence: it agrees with the pointwise convergence. If one wants to prove the octahedrality of the norm of the bidual space of $\mathcal{F}(M)$ for some $M$, the analogous way is to show that every convex combination of weak slices of the unit ball of $\Lip(M,\R)$ has diameter two and then use \cite[Corollary 2.2]{blroctajfa} to get from this fact the octahedrality of the norm of $\mathcal{F}(M)^{**}=\Lip(M,\R)^*$. The main difficulty for this is that, as far as we are concerned, the knowledge of the weak topology on bounded sets of $\Lip(M,\R)$ is not very satisfactory. Nevertheless, our Lemma \ref{lemmaweaknull} (jointly with \cite[Corollary 2.2]{blroctajfa}), allows us to provide the following result.

\begin{theorem}\label{teoctabiduallibre}
Let $M$ be a complete metric space. If $M'$ is infinite or $M$ is discrete but not uniformly discrete, then the norm of $\mathcal F(M)^{**}$ is octahedral.
\end{theorem}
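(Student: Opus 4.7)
The plan is to invoke \cite[Corollary 2.2]{blroctajfa}: the bidual norm on $\mathcal F(M)^{**}$ is octahedral if and only if every convex combination of weak slices of $B_{\Lip(M,\R)}$ has diameter $2$. Accordingly, fix $F_1,\ldots,F_k\in S_{\mathcal F(M)^{**}}$, positive scalars $\lambda_1,\ldots,\lambda_k$ summing to $1$, $\alpha>0$, and set $C:=\sum_i\lambda_i\, S(B_{\Lip(M,\R)},F_i,\alpha)$. Given $\varepsilon>0$, the goal is to produce $u,v\in C$ with $\|u-v\|_L>2-\varepsilon$.

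First, exploiting the hypothesis on $M$ in exactly the same way as the proof of Theorem \ref{th:weakdenseSA}, one extracts a sequence of pairs $(x_n,y_n)\in M\times M$, $x_n\neq y_n$, contained in pairwise disjoint balls $B_n:=B(x_n,r_n)$ with $d(x_n,y_n)/r_n\to 0$. In the case $M'$ infinite, centre the balls at distinct cluster points $x_n\in M'$ and take $y_n\in M\setminus\{x_n\}$ with $d(x_n,y_n)\to0$; in the case $M$ discrete but not uniformly discrete, iterate Lemma \ref{lemadiscre} to obtain such pairs with the balls $B_n$ pairwise disjoint.

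Second, pick $f_i\in S(B_{\Lip(M,\R)},F_i,\alpha/2)$ for each $i$. For each $n$, use McShane's extension theorem to build two modifications $u_i^{(n)},v_i^{(n)}\in B_{\Lip(M,\R)}$ which coincide with $f_i$ off $B_n$ and whose values at $x_n,y_n$ are prescribed so that
\[
u_i^{(n)}(x_n)-u_i^{(n)}(y_n)\geq (1-\eta_n)d(x_n,y_n),\qquad v_i^{(n)}(x_n)-v_i^{(n)}(y_n)\leq -(1-\eta_n)d(x_n,y_n),
\]
with $\eta_n\to 0$; feasibility relies on the fact that $\mathrm{dist}(\{x_n,y_n\},M\setminus B_n)\geq r_n-d(x_n,y_n)$ so the McShane interval of admissible values at $x_n$ (respectively $y_n$) has length comparable to $r_n\gg d(x_n,y_n)$, leaving room to enforce a slope of magnitude close to one at the pair $(x_n,y_n)$.

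Third, note that the differences $u_i^{(n)}-f_i$ and $v_i^{(n)}-f_i$ are supported in the disjoint balls $B_n$ and have Lipschitz norm at most $2$. Therefore the scaled sequences $(u_i^{(n)}-f_i)/2$ and $(v_i^{(n)}-f_i)/2$ lie in $B_{\Lip(M,\R)}$ with pairwise disjoint supports, so by Lemma \ref{lemmaweaknull} they are weakly null. Consequently, $F_i(u_i^{(n)})\to F_i(f_i)>1-\alpha/2$, and likewise for $v_i^{(n)}$, whence $u_i^{(n)},v_i^{(n)}\in S(B_{\Lip(M,\R)},F_i,\alpha)$ for $n$ large enough. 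Setting $u^{(n)}:=\sum_i\lambda_i u_i^{(n)}$ and $v^{(n)}:=\sum_i\lambda_i v_i^{(n)}$ then yields elements of $C$, and
\[
\|u^{(n)}-v^{(n)}\|_L\geq \frac{|(u^{(n)}-v^{(n)})(x_n)-(u^{(n)}-v^{(n)})(y_n)|}{d(x_n,y_n)}\geq 2(1-\eta_n)\longrightarrow 2,
\]
which delivers $\diam(C)=2$.

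The technical difficulty lies in Step 2: the McShane interval at $x_n$ (respectively $y_n$) can, for some particular $f_i$, collapse if $f_i$ has a tight Lipschitz pair going through $B_n$ (the extreme case being that of a geodesic of $M$ on which $f_i$ is an isometric embedding). This is the genuine obstacle and must be handled by combining the McShane argument with a small preliminary perturbation of $f_i$ inside its own slice (itself a weakly null correction provided by Lemma \ref{lemmaweaknull}) in order to break the tightness, or by using the abundance of candidate pairs supplied by the hypothesis to avoid the finitely many tight directions of $f_1,\ldots,f_k$; the condition $d(x_n,y_n)/r_n\to 0$ is precisely what guarantees that such a choice remains possible in the limit.
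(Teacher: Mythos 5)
Your overall strategy is the right one and matches the paper's in outline: reduce to showing that every convex combination of weak slices of $B_{\Lip(M,\R)}$ has diameter two (via \cite[Corollary 2.2]{blroctajfa}), extract pairwise disjoint balls $B(x_n,r_n)$ with $d(x_n,y_n)/r_n\to 0$ from the hypothesis (via Lemma \ref{lemadiscre} in the discrete case), and use Lemma \ref{lemmaweaknull} on disjointly supported perturbations to stay inside the given weak slices. Steps 1 and 3 are fine. But Step 2 is a genuine gap, and it is exactly the point you flag yourself: for a fixed $f_i\in B_{\Lip(M,\R)}$ there need not exist \emph{any} norm-one function agreeing with $f_i$ off $B_n$ whose slope at $(x_n,y_n)$ is close to $\pm 1$. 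Take $M\supseteq[0,1]$ with $f_i$ the identity on that segment and $x_n,y_n$ inside it: the McShane constraints coming from the two boundary points of $B_n$ pin the values at $x_n$ and at $y_n$ exactly, so the admissible intervals are singletons and the slope at $(x_n,y_n)$ cannot be changed at all. Neither of your proposed repairs closes this: ``avoiding the finitely many tight directions'' fails because a single $f_i$ can be tight at \emph{every} pair (as in the example), and a ``preliminary perturbation of $f_i$ inside its own slice to break the tightness'' is precisely the nontrivial construction that is missing --- a perturbation supported only in $B_n$ does not relax the constraints coming from points outside $B_n$.

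The paper's resolution is a concrete two-scale construction (Lemma \ref{lematecnissd2p}) that sidesteps the obstruction: first \emph{flatten} $f_i$ to the constant value $f_i(x_n)$ on the intermediate ball $B(x_n,n^{2/3}d(x_n,y_n))$ while keeping $f_i$ outside $B(x_n,nd(x_n,y_n))$; the gap between the two radii makes this partial function $(1+n^{-1/3})$-Lipschitz, so McShane applies with norm only $1+n^{-1/3}\to 1$ (one then renormalises, and the resulting correction of $f_i$ is norm-null plus disjointly supported, hence still weakly null). Once the function is constant near $x_n$, one superposes a symmetric bump $\pm h_n$ with $\norm{h_n}_L=1$, $h_n(x_n)=0$, supported in the inner ball $B(x_n,n^{1/3}d(x_n,y_n))$; the pair $g_i^n+h_n$, $g_i^n-h_n$ then plays the role of your $u_i^{(n)},v_i^{(n)}$ and gives $\norm{(g_i^n+h_n)-(g_i^n-h_n)}_L=2\norm{h_n}_L=2$. (The paper packages this as the symmetric strong diameter two property of $\Lip(M,\R)$, Theorem \ref{teogenessd2pesc}, using the sequential characterisation of \cite[Theorem~2.1(d)]{hlln}, which is why only $\norm{g_i^n}_L\to 1$ rather than $\norm{g_i^n}_L\leq 1$ is needed.) Without this flattening idea, or some substitute for it, your proof does not go through.
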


\begin{remark}{\slshape
Note that the previous result is not sharp. For instance, it is well-known that the bidual norm of $\mathcal F(\mathbb N)=\ell_1$ is octahedral because every convex combination of slices of $B_{\ell_\infty}$ has diameter two \cite[Theorem 4.2]{aln}, but this result is not covered by the assumptions of our theorem.}
\end{remark}

As we announced above, in order to prove Theorem \ref{teoctabiduallibre}, we will focus on proving that $\Lip(M,\R)$ has the so-called SD2P. Recall that a Banach space has the \emph{SD2P} if every convex combination of weak slices of $B_X$ has diameter two. In fact, we will consider the following stronger notion, introduced in the recent paper \cite{anp}.

\begin{definition}
A Banach space $X$ has the \textit{symmetric strong diameter two property} (\emph{SSD2P} in short) if for every $n\in\mathbb N$, every slices $S_1,\ldots, S_n$ of $B_X$ and every $\varepsilon>0$, there are $x_i\in S_i$ for every $i\in\{1,\ldots, n\}$ and there exists $\varphi\in B_X$ with $\Vert \varphi\Vert>1-\varepsilon$ such that $x_i\pm \varphi\in S_i$ for every $i\in\{1,\ldots, n\}$.
\end{definition}

If $X$ is a dual Banach space, the weak-star version of the previous property (the \emph{weak-star-SSD2P}), defined in the natural way, was considered in \cite[Definition~5.1]{hlln}.

It is easy to prove (see e.g.\ \cite[Lemma 4.1]{aln}) that the SSD2P implies the SD2P, but the converse result is not true \cite[Remark 3.2]{hlln}.

Our next result is an abstract condition to get the SSD2P in certain spaces of Lipschitz functions.

\begin{lemma}\label{lematecnissd2p}
Let $M$ be a pointed metric space and assume that there exists a pair of sequences $\{x_n\},\{y_n\}$ in $M$ satisfying that $nd(x_n,y_n)\longrightarrow 0$ and that the balls $B(x_n, nd(x_n,y_n))$ are pairwise disjoint. Then, $\Lip(M,\R)$ has the SSD2P.
\end{lemma}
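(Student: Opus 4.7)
The plan is to construct, for each large index $n$, a Lipschitz bump supported on the ball $B(x_n, r_n)$ where $r_n = nd(x_n, y_n)$, and exploit the disjointness of these balls together with Lemma~\ref{lemmaweaknull} to get a weakly null sequence. For $n$ large enough that $d(x_n, y_n) < d(0, x_n)$ (which is guaranteed by $nd(x_n,y_n) \to 0$), I would set
\[
\varphi_n(z) := \max\bigl\{d(x_n, y_n) - d(z, x_n),\; 0\bigr\}.
\]
Then $\varphi_n(0) = 0$, $\supp \varphi_n \subseteq B(x_n, d(x_n, y_n)) \subseteq B(x_n, r_n)$, and $\|\varphi_n\|_L = 1$, attained at the pair $(x_n, y_n)$ since $\varphi_n(x_n) = d(x_n, y_n)$ and $\varphi_n(y_n) = 0$. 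The pairwise disjointness of the balls $B(x_n, r_n)$ forces the supports of the $\varphi_n$'s to be pairwise disjoint, so Lemma~\ref{lemmaweaknull} yields that $(\varphi_n)$ is weakly null in $\Lip(M, \R)$.

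Given finitely many slices $S_i = S(B_{\Lip(M,\R)}, \phi_i, \alpha_i)$ for $i = 1, \ldots, k$ and $\varepsilon > 0$, I would pick $g_i \in S_i$ with a margin $\phi_i(g_i) > 1 - \alpha_i + \eta$ for some small $\eta > 0$, and then modify each $g_i$ to a function $f_i \in S_i$ that is essentially flat on a neighbourhood of $\supp \varphi_n$. This modification can be achieved by replacing $g_i$ on a slightly enlarged ball around $x_n$ with its value at $x_n$, smoothly interpolating via a cutoff; since $d(x_n, y_n) \to 0$ and $r_n \to 0$, both the Lipschitz-norm perturbation and the change in $\phi_i$-value are controlled and tend to zero. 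The weak nullity of $(\varphi_n)$ gives $\phi_i(\varphi_n) \to 0$, so for $n$ large enough the slice-functional condition $\phi_i(f_i \pm \varphi_n) > 1 - \alpha_i$ holds simultaneously for every $i$.

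The main obstacle is verifying the norm bound $\|f_i \pm \varphi_n\|_L \leq 1$ for every $i$. Pairs $(z_1, z_2)$ both outside $\supp \varphi_n$ are immediate since $\varphi_n$ vanishes there. The delicate cases are pairs inside $\supp \varphi_n$ and pairs straddling its boundary, where a naive estimate gives only the bound $\|f_i\|_L + \|\varphi_n\|_L \leq 2$. The key to bringing this down to $1$ is the flatness of $f_i$ near $\supp \varphi_n$: if $f_i$ has negligible local slope on the support of $\varphi_n$, then the slope of $f_i \pm \varphi_n$ on those pairs comes essentially from $\varphi_n$ alone and stays $\leq 1$. Exploiting $r_n \to 0$ to shrink the support and fine-tuning the cutoff width in the construction of $f_i$, one checks the bound on all pair types; then choosing $\varphi := \varphi_n$ for a sufficiently large $n$ completes the proof of the SSD2P, as $\|\varphi\|_L = 1 > 1 - \varepsilon$.
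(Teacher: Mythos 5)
Your overall strategy is the same as the paper's: disjointly supported bumps made weakly null via Lemma \ref{lemmaweaknull}, together with a flattening of the slice elements on small balls around the $x_n$. The gap is precisely in the step you flag as ``the main obstacle'': the exact norm bounds $\norm{f_i}_L\leq 1$ and $\norm{f_i\pm\varphi_n}_L\leq 1$ are not achievable by this construction, only their asymptotic versions. First, the flattened function generically leaves the unit ball before $\varphi_n$ is even added: if $g_i$ has slope close to $1$ near $x_n$, then any Lipschitz function that is constant on $B(x_n,\rho)$ and agrees with $g_i$ outside $B(x_n,\rho')$ has Lipschitz constant roughly at least $1+\rho/(\rho'-\rho)>1$ (take a point at distance $\rho$ from $x_n$ and a point just outside $B(x_n,\rho')$ along which $g_i$ has full slope), so $f_i\in S_i\subseteq B_{\Lip(M,\R)}$ already fails. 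Second, even granting $f_i$ constant on $B(x_n,\rho)$ with $\norm{f_i}_L\leq 1$, a straddling pair $z_1\in\supp\varphi_n$, $z_2\notin B(x_n,\rho)$ only gives
\[
\frac{|(f_i\pm\varphi_n)(z_1)-(f_i\pm\varphi_n)(z_2)|}{d(z_1,z_2)}\leq 1+\frac{\varphi_n(z_1)}{d(z_1,z_2)}\leq 1+\frac{d(x_n,y_n)}{\rho-d(x_n,y_n)},
\]
and this is attained up to $o(1)$ in examples, so no fine-tuning of the cutoff width brings it down to $1$ for a fixed $n$; it only tends to $1$ as $n\to\infty$.

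This is why the paper does not work with the definition of the SSD2P directly but with the sequential characterisation of \cite[Theorem~2.1(d)]{hlln}, which only asks for $\norm{g_i^n}_L\to 1$, $\norm{g_i^n\pm h_n}_L\to 1$, $g_i^n\to f_i$ weakly and $h_n\to 0$ weakly --- exactly the quantities your construction does control. Your argument can be repaired without that reference by a final normalisation: set $\lambda_n:=\max_i\max\{\norm{f_i^n}_L,\norm{f_i^n\pm\varphi_n}_L\}$, observe $\lambda_n\to 1$, and use $f_i^n/\lambda_n$ and $\varphi:=\varphi_n/\lambda_n$ for $n$ large; the slice conditions survive because you picked $g_i$ with a margin $\eta$, and $\phi_i(f_i^n)\to\phi_i(g_i)$ and $\phi_i(\varphi_n)\to 0$ by the weak nullity of the disjointly supported perturbations. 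As written, however, the assertion that ``one checks the bound on all pair types'' is not correct, and this normalisation (or the citation) is a genuinely missing step.
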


\begin{proof} By \cite[Theorem~2.1, (d)]{hlln}, it is enough to prove that given $N\in \N$ and norm-one Lipschitz functions $f_1,\ldots,f_N$, there are a weakly-null sequence $\{h_n\}_{n\in \N}$ of norm-one Lipschitz functions and sequences $\{g_i^n\}_{n\in\N}$ such that $\|g_i^n\|\longrightarrow 1$ and that $g_i^n\longrightarrow f_i$ weakly for every $i=1,\ldots,N$, satisfying that $\|g_i^n \pm h_n\|\longrightarrow 1$ for every $i=1,\ldots,N$.

Let us construct the sequences. Given $i\in\{1,\ldots, N\}$ and $n\in\mathbb N$, we define $$g_i^n\colon [M\setminus B(x_n,nd(x_n,y_n))]\cup B(x_n, n^\frac{2}{3}d(x_n,y_n))\longrightarrow \mathbb R$$ by the equation
$$g_i^n(x):=\left\{\begin{array}{cc}
f_i(x_n) & x\in B\left(x_n,n^\frac{2}{3}d(x_n,y_n)\right),\\
f_i(x) & x\notin B(x_n, nd(x_n,y_n)).
\end{array} \right.$$
It follows that $\Vert g_i^n\Vert_L\longrightarrow 1$. Indeed, pick $$x\in B(x_n,n^\frac{2}{3}d(x_n,y_n))\quad \text{and}\quad y\notin B(x_n, nd(x_n,y_n))$$ (the remaining cases are immediate). Then,
\begin{align*}
\frac{\vert g_i^n(x)-g_i^n(y)\vert}{d(x,y)}&=\frac{\vert f_i(x_n)-f_i(y)\vert}{d(x,y)}\leq \frac{d(x_n,y)}{d(x,y)}\leq \frac{d(x,y)+d(x,x_n)}{d(x,y)}\\
&=1+\frac{d(x,x_n)}{d(x,y)} \leq 1+\frac{n^\frac{2}{3}d(x_n,y_n)}{nd(x_n,y_n)}=1+\frac{1}{n^\frac{1}{3}}\longrightarrow 1.
\end{align*}
By McShane extension theorem, we can consider that the functions $g_i^n$ are defined in the whole of $M$. Now, observe that $\supp(g_i^n-f_i)\subseteq B(x_n,nd(x_n,y_n))$, which are disjoint sets by the assumption. Consequently, $g_i^n\stackrel{w}{\longrightarrow} f_i$ for every $i=1,\ldots,n$ by Lemma \ref{lemmaweaknull}.

Now, again the assumptions on the balls imply the existence of a sequence $\{h_n\}\subseteq \Lip(M,\R)$ such that, for every $n\in\mathbb N$, we have that $\Vert h_n\Vert_L=1$, that $\supp(h_n)\subseteq B(x_n,n^\frac{1}{3}d(x_n,y_n))$ and that $h_n(x_n)=0$. Again from the disjointness of the supports, we conclude that $\{h_n\}$ is weakly null from Lemma \ref{lemmaweaknull}. Let us prove that $\Vert g_i^n\pm h_n\Vert_L\longrightarrow 1$. To this end, pick $i\in\{1,\ldots, N\}$ and $n\in\mathbb N$. Given $x,y\in M$ with $x\neq y$, we have that
$$
\frac{\vert (g_i^n\pm h_n)(x)-(g_i^n\pm h_n)(y)\vert}{d(x,y)}\leq \underbrace{\frac{\vert g_i^n(x)-g_i^n(y)\vert}{d(x,y)}}_A+\underbrace{\frac{\vert h_n(x)-h_n(y)\vert}{d(x,y)}}_B=:C.
$$
Let us obtain an upper bound for $C$:
\begin{itemize}
\item If $B=0$ then $C\leq \Vert g_i^n\Vert_L$.
\item If $B\neq 0$, then either $x$ or $y$ belongs to $B(x_n,n^\frac{1}{3}d(x_n,y_n))$, so let us assume that such a point is $x$. In this case, notice that $g_i^n(x)=f(x_n)$. Now, if $y\in B(x_n,n^\frac{2}{3}d(x_n,y_n))$ then $g_i^n(y)=f(x_n)$ from where $A=0$ and $C\leq 1$ in this case. Otherwise, if $y\notin B(x_n,n^\frac{2}{3} d(x_n,y_n))$, then $h_n(y)=0$. Furthermore,
$$
d(x,y)\geq (n^\frac{2}{3}-n^\frac{1}{3})d(x_n,y_n).
$$
So, taking $n \geq 2$, we get
$$
B\leq \frac{d(x_n,x)}{d(x,y)}\leq \frac{n^\frac{1}{3}d(x_n,y_n)}{(n^\frac{2}{3}-n^\frac{1}{3})d(x_n,y_n)} =\frac{1}{n^\frac{1}{3}-1},
$$
and then $C\leq \Vert g_i^n\Vert_L+\frac{1}{n^\frac{1}{3}-1}$.
\end{itemize}
Taking supremum in $x,y\in M$ with $x\neq y$, we get that
$$
\Vert g_i^n\pm h_n\Vert_L\leq \Vert g_i^n\Vert_L +\frac{1}{n^\frac{1}{3}-1}
$$
for every $n \geq 2$. Since it is not difficult to prove that $\Vert g_i^n\pm h_n\Vert_L\geq 1$ (it is enough to consider points of $B(x_n, n^\frac{1}{3}d(x_n,y_n))$ from the assumption that $\Vert h_n\Vert_L=1$), we get that $\Vert g_i^n\pm h_n\Vert_L\longrightarrow 1$.
\end{proof}

As a consequence of the previous result we get the promised result about the SSD2P.

\begin{theorem}\label{teogenessd2pesc}
Let $M$ be a complete pointed metric space. If $M'$ is infinite or $M$ is discrete but not uniformly discrete, then $\Lip(M,\R)$ has the SSD2P.
\end{theorem}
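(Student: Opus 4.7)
The plan is to reduce the theorem to Lemma \ref{lematecnissd2p}: it suffices to exhibit sequences $\{x_n\}, \{y_n\}$ in $M$ satisfying $nd(x_n,y_n)\to 0$ with $B(x_n, nd(x_n,y_n))$ pairwise disjoint. I would handle the two hypotheses of the theorem separately.

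For the case $M'$ infinite, I would pick a sequence of distinct cluster points $\{a_k\}\subset M'$ and produce radii $\rho_k\to 0$ such that the balls $B(a_k,\rho_k)$ are pairwise disjoint. A standard total-boundedness dichotomy does the job: either $\{a_k\}$ fails to be totally bounded, whence some subsequence has pairwise distances bounded below by a fixed $\delta>0$ and one takes $\rho_k:=\min\{\delta/3,\, 1/k\}$; or $\{a_k\}$ admits a Cauchy subsequence, which by completeness of $M$ converges to some $p\in M$ (and then $p\in M'$ as $M'$ is closed), and after a further extraction ensuring $d(a_{k+1},p)\leq d(a_k,p)/4$ one takes $\rho_k:=d(a_k,p)/4$. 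Once the $\rho_k$ are chosen, I would use that each $a_k$ lies in $M'$ to pick $y_k\in M$ with $0<d(a_k,y_k)<\rho_k/k^2$ and set $x_k:=a_k$. Then $kd(x_k,y_k)<\rho_k/k\to 0$ and $B(x_k, kd(x_k,y_k))\subseteq B(a_k,\rho_k)$, so Lemma \ref{lematecnissd2p} applies.

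For the case $M$ discrete but not uniformly discrete, I would follow verbatim the inductive construction performed in the proof of Theorem \ref{th:weakdenseSA}: by iterating Lemma \ref{lemadiscre} one produces $\{x_n\},\{y_n\}$ with $x_{n+1},y_{n+1}\in M\setminus \bigcup_{m\leq n}B(x_m, 2md(x_m, y_m))$ and $d(x_{n+1},y_{n+1})\leq \min\{\tfrac{n}{n+1}d(x_n, y_n),\, n^{-2}\}$, with the remainder remaining discrete but not uniformly discrete at every stage so the induction can continue. The bound $n^{-2}$ yields $nd(x_n,y_n)\leq 1/n\to 0$, while the monotonicity $(n+1)d(x_{n+1},y_{n+1})\leq nd(x_n, y_n)$ combined with $x_{n+1}\notin B(x_m, 2md(x_m, y_m))$ for $m\leq n$ forces pairwise disjointness of the balls $B(x_n, nd(x_n, y_n))$; Lemma \ref{lematecnissd2p} then applies.

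I expect the main obstacle to be the careful extraction in Case 1, which must simultaneously arrange the disjointness of the balls and a vanishing rate for $\rho_k$; the total-boundedness dichotomy together with the completeness of $M$ is what resolves this cleanly. The analogous delicate point in Case 2, namely preserving the hypothesis ``discrete but not uniformly discrete'' along the induction, is precisely what Lemma \ref{lemadiscre} is built to handle, so no essential new difficulty arises there.
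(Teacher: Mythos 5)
Your proposal is correct and follows essentially the same route as the paper: both cases are reduced to Lemma \ref{lematecnissd2p}, with disjoint balls centred at distinct cluster points when $M'$ is infinite (the paper leaves this construction as an exercise; your total-boundedness dichotomy supplies the details), and with the inductive application of Lemma \ref{lemadiscre} exactly as in the proof of Theorem \ref{th:weakdenseSA} in the discrete, non-uniformly-discrete case. The disjointness and decay estimates you give check out, so nothing essential is missing.
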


Note that, as announced, Theorem \ref{teoctabiduallibre} follows from this result by \cite[Corollary 2.2]{blroctajfa}.

\begin{proof}[Proof of Theorem \ref{teogenessd2pesc}]
If $M'$ is infinite, it is not difficult to construct an infinite sequence of disjoint balls centered at (different) cluster points and use the fact that the center of the balls are cluster points to get a sequence $\{y_n\}_{n\in \N}$ that allows us to use Lemma~\ref{lematecnissd2p}.

On the other hand, if $M$ is discrete and not uniformly discrete, we can construct by Lemma~\ref{lemadiscre} a sequence of pairs $(x_n,y_n)$ such that, for every $n\in\mathbb N$, $0<d(x_n,y_n)<\frac{1}{n^2}$ and such that $B(x_n,nd(x_n,y_n))$ is a sequence of pairwise disjoint balls, so again Lemma~\ref{lematecnissd2p} applies.
\end{proof}

The following comment is pertinent.

\begin{remark}{\slshape
Note that Theorem \ref{teogenessd2pesc} improves, under its hypotheses, several known results about the big slice phenomena in spaces of Lipschitz functions. More precisely, given a metric space $M$ such that $M'$ is infinite or $M$ is discrete but not uniformly discrete, Theorem \ref{teogenessd2pesc} improves the consequences obtained in \cite{blr} (respectively, \cite{hlln}, \cite{ivak}) in $\Lip(M,\R)$, namely, that $\Lip(M,\R)$ has the weak-star-SD2P (respectively, weak-star-SSD2P, the property that every slice of its unit ball has diameter two).}
\end{remark}

In the compact case, we get the following optimal result.

\begin{corollary}
Let $M$ be an infinite compact metric space. Then the norm of $\mathcal F(M)^{**}$ is octahedral.
\end{corollary}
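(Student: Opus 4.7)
The approach is to reduce the corollary to Theorem \ref{teoctabiduallibre} after verifying its hypothesis for an infinite compact $M$. Since $M$ is compact and infinite, Bolzano--Weierstrass ensures $M'\neq\emptyset$. Moreover, a compact discrete metric space is automatically finite (cover it by singletons, which are open, and use compactness), so the ``discrete but not uniformly discrete'' clause of Theorem \ref{teoctabiduallibre} is vacuous in our setting. The whole corollary therefore follows if one can put oneself in a position to use the argument underlying Theorem \ref{teogenessd2pesc}; that is, to produce sequences $\{x_n\},\{y_n\}\subseteq M$ with $n\,d(x_n,y_n)\to 0$ and with the balls $B(x_n,n\,d(x_n,y_n))$ pairwise disjoint, so that Lemma \ref{lematecnissd2p} gives the SSD2P of $\Lip(M,\R)$ and then \cite[Corollary 2.2]{blroctajfa} gives octahedrality of $\mathcal F(M)^{**}$.

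If $M'$ is infinite, the conclusion is immediate: pick distinct cluster points $a_n\in M'$, extract a convergent subsequence and thin to obtain pairwise disjoint closed balls centred at the $a_n$'s of radii decaying rapidly, and use that each $a_n$ is a cluster point to select companion points $y_n\in M$ with $n\,d(x_n,y_n)\to 0$, exactly as in the opening sentence of the proof of Theorem \ref{teogenessd2pesc}. The potentially delicate case is when $M'=\{a_1,\ldots,a_k\}$ is a non-empty finite set; then, since $M\setminus M'$ is an infinite set of isolated points and $M$ is compact, these isolated points accumulate at some $a_i$, giving a sequence $z_n\to a_i$ of distinct isolated points to work with. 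One would then extract rapidly-thinning pairs $(x_n,y_n)$ from this sequence so that consecutive pairs are geometrically well-separated, restoring the disjoint-ball condition.

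The main obstacle I anticipate lies precisely in that second case, where the accumulation of isolated points is one-sided: for a space such as $M=\{0\}\cup\{2^{-n}\}$, any ball of radius larger than $d(x_n,0)$ around an isolated point $x_n$ near $0$ necessarily contains $0$, and a short calculation shows that the Lemma \ref{lematecnissd2p} radii $n\,d(x_n,y_n)$ are forced to exceed $d(x_n,0)$ for $n\geq 2$. To circumvent this, I would combine the above construction with the $\ell_1$-decomposition $\mathcal F(M)\approx\mathcal F(E)\oplus_1\bigl[\bigoplus_{n,i}\mathcal F(\tilde C_n^i)\bigr]_{\ell_1}$ obtained in Case 2 of the proof of Theorem \ref{th:weakdenseSA}, which in the compact case exhibits $\mathcal F(M)$ as an $\ell_1$-type sum of finite-dimensional pieces. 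From this $\ell_1$-structure one produces infinitely many Lipschitz functions with pairwise disjoint supports (the hypothesis of Lemma \ref{lemmaweaknull}), and reruns the bump construction inside Lemma \ref{lematecnissd2p} at the level of the summands rather than at the level of individual points of $M$. This yields the SSD2P of $\Lip(M,\R)$ and hence the octahedrality of $\mathcal F(M)^{**}$ in all infinite compact cases.
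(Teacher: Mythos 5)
Your reduction of the corollary to the two cases ($M'$ infinite versus $M'$ non-empty and finite) is correct, and your treatment of the first case coincides with the paper's: when $M'$ is infinite one builds the disjoint balls centred at distinct cluster points and invokes Theorem \ref{teogenessd2pesc} (hence Theorem \ref{teoctabiduallibre}). The problem is the second case, where your proposed argument has a genuine gap. The $\ell_1$-decomposition $\mathcal F(M)\approx\mathcal F(E)\oplus_1\bigl[\bigoplus_{n,i}\mathcal F(\tilde C_n^i)\bigr]_{\ell_1}$ from Case 2 of the proof of Theorem \ref{th:weakdenseSA} is only an \emph{isomorphism} (the constants $1+2R/\alpha$, $L$ and $1+2C^{-1}$ appearing there are strictly larger than $1$ in general, and Proposition \ref{prop:kaufmann} only gives an isometry when $C=1$). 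The SSD2P and, more generally, any diameter-two property are isometric notions that are destroyed by renorming, so no such isomorphic decomposition can by itself yield the SSD2P of $\Lip(M,\R)$. Moreover, "rerunning the bump construction at the level of the summands" does not address the obstruction you yourself identified: producing functions $h_n$ with pairwise disjoint supports gives weak nullity via Lemma \ref{lemmaweaknull}, but the heart of Lemma \ref{lematecnissd2p} is the norm estimate $\Vert g_i^n\pm h_n\Vert_L\to 1$, which relies on the quantitative separation of the balls $B(x_n,nd(x_n,y_n))$ from the rest of $M$ — precisely what fails for $M=\{0\}\cup\{2^{-n}\}$. (You also silently merge the two subcases of the proof of Theorem \ref{th:weakdenseSA}: the disjoint-ball construction is available in Case 1 there, but the $\ell_1$-decomposition only in Case 2.)

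The paper closes the finite-$M'$ case by an entirely different, linear-theoretic route that avoids proving the SSD2P of $\Lip(M,\R)$ altogether: if $M'$ is finite and $M$ is compact, then $M$ is countable compact, so by \cite[Theorem~2.1]{dalet} the little Lipschitz space $Z=\lip(M,\R)$ satisfies $Z^{**}=\Lip(M,\R)$; by \cite[Theorem~6.6]{Kalton04} this $Z$ is a non-reflexive $M$-embedded space, so by \cite[Theorem~4.10]{aln} every convex combination of weak slices of $B_{Z^{**}}=B_{\Lip(M,\R)}$ has diameter two, and \cite[Corollary 2.2]{blroctajfa} then gives octahedrality of $\Lip(M,\R)^*=\mathcal F(M)^{**}$. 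If you want to keep your approach, you would need either an isometric version of the decomposition or a direct verification of the SD2P for $\Lip(M,\R)$ in this case; as written, the final step of your argument does not go through.
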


\begin{proof}
The case of $M'$ being infinite follows from Theorem \ref{teoctabiduallibre}. Otherwise, $M'$ is finite and then, $M$ is a countable compact metric space. Therefore, by \cite[Theorem~2.1]{dalet} there is a Banach space $Z$ such that $Z^{**}=\Lip(M,\R)$. Actually, $Z$ can be considered to be the so-called little Lipschitz space $\lip(M,\R)$, see \cite[Definition 3.1.1]{wea5} for background. Thus, $Z$ is a non-reflexive $M$-embedded Banach space by \cite[Theorem~6.6]{Kalton04}. As a consequence, both $Z$ and $Z^{**}=\Lip(M,\R)$ satisfy that every convex combination of weak slices of their unit ball has diameter two by \cite[Theorem~4.10]{aln}, and so the norm of $\mathcal F(M)^{**}=\Lip(M,\R)^*$ is octahedral by \cite[Corollary 2.2]{blroctajfa}.
\end{proof}

Let us discuss a little bit with a possible version of Theorem \ref{teogenessd2pesc} for vector-valued Lipschitz maps. We recall that given a metric space $M$ and a Banach space $Y$, it is said that the pair $(M,Y)$ satisfies the \emph{contraction-extension property} (\emph{CEP} in short) if McShane's extension theorem holds for $Y$-valued Lipschitz maps from subsets of $M$, that is, given $N\subseteq M$ and a Lipschitz function $f\colon N\longrightarrow Y$, there exists a Lipschitz function $F\colon M\longrightarrow Y$ which extends $f$ and satisfies that
$$\Vert F\Vert_{\Lip(M,Y)}=\Vert f\Vert_{\Lip(N,Y)}.$$
On the one hand, in the particular case of being $M$ a Banach space, the definition given above agrees with the one given in \cite{beli}. On the other hand, let us give some examples of pairs which have the CEP. First of all, given $M$ a metric space, the pair $(M,\mathbb R)$ has the CEP by McShane's extension theorem. Actually, the pair $(M,\ell_\infty(\Gamma))$ has the CEP for every set $\Gamma$. Another example coming from \cite[Chapter 2]{beli} is the fact that the pair $(H,H)$ has the CEP whenever $H$ is any Hilbert space. Anyway, the CEP is a restrictive property as, for instance, if $Y$ is a strictly convex Banach space such that there exists a Banach space $X$ with $\dim(X)\geq 2$ and verifying that the pair $(X,Y)$ has the CEP, then $Y$ is a Hilbert space \cite[Theorem~2.11]{beli}. See also \cite{Gode-NWEJM} for the relation between the extension of certain vector-valued Lipschitz maps and the approximation property of the Lipschitz-free spaces in the context of compact metric spaces.

Note that, following word-by-word the proof of Lemma \ref{lematecnissd2p}, using the CEP instead of McShane's extension theorem, we can get the following vector-valued version of Theorem \ref{teogenessd2pesc}.

\begin{theorem}\label{teogenessd2pvect}
Let $M$ be a pointed metric space and let $Y$ be a Banach space such that the pair $(M,Y)$ has the CEP. If $M'$ is infinite or $M$ is discrete but not uniformly discrete, then $\Lip(M,Y)$ has the SSD2P.
\end{theorem}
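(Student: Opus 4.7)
The plan is to mimic exactly the route used to establish Theorem~\ref{teogenessd2pesc}. That theorem was deduced from the abstract criterion of Lemma~\ref{lematecnissd2p}, together with the construction of two sequences $\{x_n\},\{y_n\}\subset M$ for which the balls $B(x_n,nd(x_n,y_n))$ are pairwise disjoint and $nd(x_n,y_n)\to 0$. The construction of such sequences is purely metric: in the case $M'$ infinite one picks disjoint balls centred at cluster points and chooses $y_n$ nearby in $M\setminus\{x_n\}$; in the case $M$ discrete but not uniformly discrete one applies Lemma~\ref{lemadiscre} inductively to produce the sequences with $d(x_n,y_n)<n^{-2}$ and disjoint enlarged balls. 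Both arguments are independent of the target space, so they can be used verbatim here.

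What remains is to verify that Lemma~\ref{lematecnissd2p} itself survives the passage from $\R$-valued to $Y$-valued Lipschitz maps, provided the pair $(M,Y)$ has the CEP. Given norm-one $f_1,\dots,f_N\in\Lip(M,Y)$ and the criterion from \cite[Theorem~2.1(d)]{hlln} (which is valid in any Banach space), one defines for each $i$ and $n$ the map $g_i^n$ on the set $[M\setminus B(x_n,nd(x_n,y_n))]\cup B(x_n,n^{2/3}d(x_n,y_n))$ by setting $g_i^n\equiv f_i(x_n)$ on the small ball and $g_i^n=f_i$ outside the big ball. The estimate $\|g_i^n\|_L\to 1$ carries over unchanged since it only uses the triangle inequality in $Y$. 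At exactly this point one uses the CEP to extend $g_i^n$ to a function defined on all of $M$ with the same Lipschitz constant; this is the only place McShane is invoked in the scalar proof.

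For the "test perturbation" $h_n$, take any $y_0\in S_Y$ and a scalar Lipschitz bump $\tilde h_n\in \Lip(M,\R)$ with $\|\tilde h_n\|_L=1$, $\supp(\tilde h_n)\subset B(x_n,n^{1/3}d(x_n,y_n))$ and $\tilde h_n(x_n)=0$ (such a bump exists by McShane for scalars and the assumption on the disjoint balls), and set $h_n:=y_0\,\tilde h_n\in \Lip(M,Y)$, which has the same norm and support. Since the supports of $\{g_i^n-f_i\}_n$ and of $\{h_n\}_n$ are contained in the pairwise disjoint balls $B(x_n,nd(x_n,y_n))$, Lemma~\ref{lemmaweaknull} (which holds for $Y$-valued maps exactly as stated) guarantees that $g_i^n\xrightarrow{w}f_i$ for each $i$ and that $\{h_n\}$ is weakly null. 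The estimate $\|g_i^n\pm h_n\|_L\to 1$ is a purely metric computation on difference quotients carried out case by case on the location of $x,y\in M$, and it transfers verbatim to $Y$-valued maps by reading $|\cdot|$ as $\|\cdot\|_Y$. Invoking \cite[Theorem~2.1(d)]{hlln} then gives the SSD2P for $\Lip(M,Y)$.

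The only potential obstacle in the translation is precisely the extension step for the $g_i^n$, and this is exactly what the CEP is designed to take care of; once it is available, every other ingredient (the weak-null criterion of Lemma~\ref{lemmaweaknull}, the metric constructions of $\{x_n\},\{y_n\}$, and the Hardtke--Lissitsin--L\~oo--Nadel characterization of the SSD2P) is insensitive to the nature of the target space. This is why the theorem extends to the vector-valued setting with essentially no change beyond replacing McShane by the CEP.
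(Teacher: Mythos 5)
Your proposal is correct and follows essentially the same route as the paper, which simply observes that the proof of Lemma~\ref{lematecnissd2p} carries over word-by-word once McShane's theorem is replaced by the CEP in the extension step for the $g_i^n$. Your extra detail of taking $h_n = y_0\,\tilde h_n$ for a fixed $y_0\in S_Y$ and a scalar bump $\tilde h_n$ is a clean way to supply the vector-valued perturbation, and everything else (the disjoint-ball constructions, Lemma~\ref{lemmaweaknull}, and the sequential criterion for the SSD2P) is indeed insensitive to the target space.
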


The previous theorem provides a partial positive answer to \cite[Question~3.1]{blr}, where the authors asked whether $\Lip(M,X^*)$ has the SD2P whenever the pair $(M,X^*)$ has the CEP and $M$ is not uniformly discrete and bounded.

\noindent \textbf{Acknowledgment:\ } The authors are very grateful to Vladimir Kadets, Colin Petitjean, and Anton\'{\i}n Proch\'{a}zka for many comments which have improved the final version of this paper. They also thanks the anonymous referee for the valuable suggestions which have also improved the exposition of the paper.

%\bibliographystyle{acm}
%\bibliography{biblio}

\end{document}